\definecolor{trp}{rgb}{1,1,1}
\definecolor{red}{rgb}{1,0,.2}
\newtheorem{theorem}{Theorem}[section]
\theoremstyle{plain}
\newtheorem{cor}[theorem]{Corollary}
\newtheorem{definition}[theorem]{Definition}
\newtheorem{lemma}[theorem]{Lemma}
\newtheorem{prop}[theorem]{Proposition}
\numberwithin{equation}{section}
\newcommand{\ly}[1]{\chi^{#1}_{\mu}}
\newcommand{\R}{\mathbb{R}}
\newcommand{\N}{\mathbb{N}}
\newcommand{\pr}[1]{\mathrm{proj}^{ss}_{#1}}
\newcommand{\proj}{\mathrm{proj}}
\newcommand{\xv}{\underline{x}}
\newcommand{\yv}{\underline{y}}
\newcommand{\ii}{\mathbf{i}}
\newcommand{\jj}{\mathbf{j}}
\newcommand{\tv}{\underline{t}}
\newcommand{\ldim}{\dim_{\mathrm{Lyap}}}
\newcommand{\pv}{\underline{p}}
\newcommand{\tvv}{\mathbf{t}}
\newcommand{\clo}{\overline{\mathcal{O}}}
\newcommand{\Alpha}{\mathcal{A}}
\newcommand{\cm}[2]{\widehat{\mu}^{#2}_{#1}}
\newcommand{\y}{\underline{\mathbf{y}}}
\newcommand{\z}{\underline{\mathbf{z}}}
\newcommand{\sgn}{\mathrm{sgn}}
\newcommand{\diam}{\mathrm{diam}}
\newcommand{\spt}{\mathrm{spt}}
\DeclareMathOperator*{\esssup}{ess\,sup}
\begin{document}
\title[On the Ledrappier-Young formula for self-affine measures] {On the Ledrappier-Young formula for self-affine measures}

\author{Bal\'azs B\'ar\'any}
\address[Bal\'azs B\'ar\'any]{Budapest University of Technology and Economics, MTA-BME Stochastics Research Group, P.O.Box 91, 1521 Budapest, Hungary \\ Mathematics Institute, University of Warwick, Coventry CV4 7AL, UK} \email{balubsheep@gmail.com}

\subjclass[2010]{Primary 37C45 Secondary 28A80}
\keywords{Self-affine measures, self-affine sets, Hausdorff dimension.}
\thanks{The research of B\'ar\'any was supported by the grants EP/J013560/1 and OTKA K104745.}

\begin{abstract}
Ledrappier and Young introduced a relation between entropy, Lyapunov exponents and dimension for invariant measures of diffeomorphisms on compact manifolds. In this paper, we show that a self-affine measure on the plane satisfies the Ledrappier-Young formula if the corresponding iterated function system (IFS) satisfies the strong separation condition and the linear parts satisfy the dominated splitting condition. We give sufficient conditions, inspired by Ledrappier and by Falconer and Kempton, that the dimensions of such a self-affine measure is equal to the Lyapunov dimension. We show some applications, namely, we give another proof for Hueter-Lalley's theorem and we consider self-affine measures and sets generated by lower triangular matrices.
\end{abstract}
\date{\today}

\maketitle

\thispagestyle{empty}

\section{Introduction}

Let $\Alpha:=\left\{A_1, A_2,\dots, A_N\right\}$ be a finite set of contracting, non-singular $2\times2$ matrices, and let $\Phi:=\left\{f_i(\xv)=A_i\xv+\tv_i\right\}_{i=1}^N$ be an {\it iterated function system} on the plane with affine mappings, where $\underline{t}_i\in\R^2$ for $i=1,\dots,N$. It is a well-known fact that there exists an unique non-empty compact subset $\Lambda$ of $\R^2$ such that $$\Lambda=\bigcup_{i=1}^Nf_i(\Lambda).$$ We call the set $\Lambda$ the {\it attractor} of $\Phi$.

Throughout the paper we denote the Hausdorff dimension of a set $A$ by $\dim_HA$, the packing dimension by $\dim_PA$, the lower and upper box counting dimension by $\underline{\dim}_BA$ and $\overline{\dim}_BA$ and the box counting dimension by $\dim_BA$. For the definitions and basic properties, we refer to Falconer~\cite{Fb1}.

The dimension theory of self-affine sets is far away from being well understood. One of the most natural approaches for the Hausdorff and box dimension of self-affine sets is the {\em subadditive pressure function}, introduced by Falconer \cite{F}. Denote by $\alpha_i(A)$ the $i$th {\it singular value} of a $2\times2$ non-singular matrix $A$, i.e. the positive square root of the $i$th eigenvalue of $AA^*$, where $A^*$ is the transpose of $A$. For $s\geq0$ define the {\it singular value function} $\phi^s$ as follows
\begin{equation*}%\label{esvf}
\phi^s(A):=\left\{\begin{array}{cc}
               \alpha_1(A)^s & 0\leq s\leq1 \\
               \alpha_1(A)\alpha_2(A)^{s-1} & 1<s\leq2 \\
               \left(\alpha_1(A)\alpha_2(A)\right)^{s/2} & s>2.
             \end{array}\right.
\end{equation*} We note that in this case, $\alpha_1(A)=\|A\|$ and $\alpha_2(A)=\|A^{-1}\|^{-1}$, where $\|.\|$ is the usual matrix norm induced by the Euclidean norm on $\R^2$. Let us define the subadditive pressure function generated by $\Alpha$ for $s\geq0$ as
\begin{equation}\label{esap}
P(s):=\lim_{n\rightarrow\infty}\frac{1}{n}\log\sum_{i_1,\dots,i_n=1}^N\phi^s(A_{i_1}\cdots A_{i_n}).
\end{equation} The function $P(s)$ is continuous, strictly monotone decreasing on $[0,\infty)$, moreover $P(0)=\log N$ and $\lim_{s\rightarrow\infty}P(s)=-\infty$. Falconer showed in \cite{F} that the unique root $s_0$ of the subadditive pressure function is always an upper bound for the box dimension of the attractor $\Lambda$ and if $\|A_i\|<1/3$ for every $i=1,\dots,N$ then $$\dim_H\Lambda=\dim_B\Lambda=\min\left\{2,s_0\right\}\text{ for Lebesgue-almost every $\tvv=(\tv_1,\dots,\tv_N)\in\R^{2N}$.}$$ The bound was later extended to $1/2$ by Solomyak, see \cite{S}.

In the case of similarities (i.e. $A_i=\rho_iU_i$, where $0<\rho_i<1$ and $U_i$ are orthonormal matrices) the dimension theory of the attractors is well understood if a separation condition holds. In the case of strict affine mappings, it is very unclear. Bedford \cite{Be} and McMullen \cite{Mc} introduced independently a family of self-affine sets on the plane, where the Hausdorff and box dimension differs, however a separation condition holds. Later, such examples were constructed by Gatzouras and Lalley \cite{GL} and Bara\'nski \cite{B}. In these cases the linear parts of the maps were diagonal matrices.

Falconer \cite{F2} proved that under some conditions and separation, the box dimension of a self-affine set is equal to the root of the subaddtive pressure. However, the only known sufficient condition in general was given by Hueter and Lalley \cite{HL}, which ensures that the Hausdorff and box dimension of a self-affine set coincide and equal to the root of the subadditive pressure. Recently, Falconer and Kempton~\cite{FK} gave conditions which ensure similar consequences.

One way to understand the Hausdorff dimension of self-affine sets depends on understanding of Hausdorff dimension of self-affine measures. We call a measure $\mu$ {\it self-affine} if it is compactly supported with support $\Lambda$ and there exists a $\pv=(p_1,\dots,p_N)$ probability vector such that
\begin{equation}\label{edefsameasure}
\mu=\sum_{i=1}^Np_i\mu\circ f_i^{-1}.
\end{equation}Ledrappier and Young \cite{LY1,LY2} introduced a formula for the Hausdorff dimension of invariant measures of diffeomorphisms on compact manifolds. It is a widespread claim that self-affine measures satisfy this formula but it was proven just in a very few cases. Basically, the first result on a class of self-affine measures and sets, for which the formula hold, was proven by Przytycki and Urba\'nski \cite{PU}. Later, Feng and Hu \cite{FH} proved that if the linear parts of the mappings are diagonal matrices then the Ledrappier-Young formula holds for the Hausdorff dimension of the self-affine measures, without assuming any separation condition or condition on the norm of the matrices. Moreover, Ledrappier \cite{L} proved that the formula is valid for a special family of self-affine measures, namely when the support is the graph of a Weierstrass functions.

Our main goal is to generalize Ledrappier's result \cite{L} for a more general family of self-affine measures.

Another important dimension theoretical property of a self-affine measure is its exactness. Denote by $B_r(\xv)$ the two dimensional ball centered at $\xv\in\R^2$ with radius $r$. Then we call
\begin{equation*}%\label{edeflocdim}
\underline{d}_{\mu}(\xv)=\liminf_{r\rightarrow0+}\frac{\log\mu(B_r(\xv))}{\log r}\text{ and }\overline{d}_{\mu}(\xv)=\limsup_{r\rightarrow0+}\frac{\log\mu(B_r(\xv))}{\log r}
\end{equation*}
the {\it lower and upper local dimension} of $\mu$ at the point $\xv$, if the limit exists then we say that the measure has {\it local dimension} $d_{\mu}(\xv)$ at the point $\xv$. It is well-known fact that \begin{equation}\label{elocdimHdim}
\dim_H\mu=\esssup_{\xv\in\spt\mu}\underline{d}_{\mu}(\xv)=\inf\left\{\dim_HA:\mu(A^c)=0\right\}
\end{equation} for any $\mu$ Radon measure, where $\spt\mu$ denotes the support of $\mu$ and $A^c$ denotes the complement of $A$, see \cite{FLR}. Moreover, we call the measure $\mu$ {\it exact dimensional} if the local dimension exists at $\mu$-almost every points and equals $\dim_H\mu$. Feng and Hu \cite{FH} proved that self-similar measures, and self-affine measures if the linear parts are diagonal matrices, are exact dimensional. Ledrappier \cite{L} proved this for the graphs of Weierstrass functions, a phenomena that we also extend.

To analyse self-affine measures, it is convenient to handle it as a natural projection of Bernoulli measures. That is, let $\Sigma^+=\left\{1,\dots,N\right\}^{\N}$ be the {\it symbolic space} of one-sided infinite length words and let $\nu=\left\{p_1,\dots,p_N\right\}^{\N}$ be a {\it Bernoulli measure}, where $\pv=(p_1,\dots,p_N)$ is a probability vector. If $\pi_+:\Sigma^+\mapsto\Lambda$ denotes the {\it natural projection}, i.e. $\pi_+(i_0,i_1,\dots)=\lim_{n\rightarrow\infty}f_{i_0}\circ \cdots\circ f_{i_n}(\underline{0})$, then $\mu=(\pi_+)_*\nu=\nu\circ\pi_+^{-1}$.

According to the result of Oseledec Multiplicative Ergodic Theorem \cite{O} for $\nu$-almost every $\ii\in\Sigma^+$ there exist constants $0<\ly{s}\leq\ly{ss}$ such that
\begin{eqnarray*}
&& \lim_{n\rightarrow\infty}\frac{1}{n}\log\alpha_1(A_{i_0}\cdots A_{i_{n-1}})=-\ly{s}\text{ and }\\
&& \lim_{n\rightarrow\infty}\frac{1}{n}\log\alpha_2(A_{i_0}\cdots A_{i_{n-1}})=-\ly{ss}\text{ for $\nu$-a.e. $\ii=(i_0,i_1,\dots)\in\Sigma^+$.}
\end{eqnarray*} We call the constants $\ly{s}$ and $\ly{ss}$ the {\it Lyapunov exponents}. Denote the entropy of $\nu$ by $h_{\nu}=-\sum_{i=1}^Np_i\log p_i$; then we  define the {\it Lyapunov-dimension} of the measure $\mu$ by
\begin{equation*}%\label{eLyapdim}
\ldim\mu=\min\left\{2,\frac{h_{\nu}}{\ly{s}},1+\frac{h_{\nu}-\ly{s}}{\ly{ss}}\right\}.
\end{equation*} Jordan, Pollicott and Simon showed that the Lyapunov dimension of a self-affine measure is always an upper bound for the Hausdorff dimension, see \cite{JPS}. We show also a sufficient condition (based on the idea of Ledrappier \cite{L}) which implies that the Lyapunov and Hausdorff dimension of a self-affine measure coincide. %We note that in \cite{L} this condition implied that the Hausdorff dimension of the graph of Weierstrass-type functions is equal to the root of subadditive pressure. We prove a similar statement for more general self-affine sets.

Throughout the paper we will follow the method of Ledrappier \cite{L} and Ledrappier and Young \cite{LY1, LY2}. At the end of the paper we give an alternative proof for the Hueter-Lalley Theorem and we show some applications for triangular matrices.

\section{Preliminaries and Results}

Let $\Alpha:=\left\{A_1, A_2,\dots, A_N\right\}$ be a finite set of contracting, non-singular $2\times2$ matrices, and let $\Phi:=\left\{f_i(\xv)=A_i\xv+\tv_i\right\}_{i=1}^N$ be an iterated function system on the plane with affine mappings.

\begin{definition}\label{dSSC}
We say that $\Phi$ satisfies the \textnormal{strong separation condition} (SSC) if there exists an open, non-empty and bounded set $\mathcal{O}\subset\R^2$ such that
\begin{enumerate}
  \item for every $i=1,\dots,N$, $f_i(\clo)\subseteq\mathcal{O}$ and
  \item for every $i\neq j$, $f_i(\clo)\cap f_j(\clo)=\emptyset$,
\end{enumerate}
where $\overline{\mathcal{O}}$ denotes the closure of $\mathcal{O}$.
\end{definition}

If the IFS satisfies the SSC then
\begin{equation}\label{eSSC}
f_i(\Lambda)\cap f_j(\Lambda)=\emptyset\text{ for every $i\neq j$,}
\end{equation}where $\Lambda$ denotes the attractor of $\Phi$. One can show that \eqref{eSSC} is actually equivalent to SSC. Moreover, $$\Lambda=\bigcap_{n=1}^{\infty}\bigcup_{i_1,\dots,i_n=1}^Nf_{i_1}\circ\cdots\circ f_{i_n}(\overline{\mathcal{O}}).$$ 

Let us denote by $\mathcal{S}=\left\{1,\dots,N\right\}$ the set of symbols and by $\Sigma=\mathcal{S}^{\mathbb{Z}}$ the symbolic space of two-sided infinite words. Moreover, let $\Sigma^{+}=\mathcal{S}^{\mathbb{N}}$ be the set of right- and $\Sigma^{-}=\mathcal{S}^{\mathbb{Z}_-}$ be the set of left side infinite length words. We note that in our definition of natural numbers, $0\in\N$. For a two-sided infinite length word $\ii=(\dots,i_{-2},i_{-1};i_0,i_1,i_2,\dots)$ let us denote the left hand side by $\ii_-$ and the right-hand side by $\ii_+$, i.e. $\ii_-=(\dots,i_{-2},i_{-1})$ and $\ii_+=(i_0,i_1,i_2,\dots)$. Denote by $\Sigma^{*}=\bigcup_{n=0}^{\infty}\mathcal{S}^n$ the set of finite length words. The number of symbols in a finite length word $\underline{i}$ is denoted by $|\underline{i}|$ and for an infinite word $\ii\in\Sigma$ we denote by $\ii|_n^k$ the elements of $\ii$ between $n$ and $k$, i.e. $\ii|_n^k=(i_n,\dots,i_k)$. Let us define also the cylinder sets on $\Sigma$ (and on $\Sigma^{+}$ respectively) by $$[\ii|_n^k]=\left\{\jj\in\Sigma:\jj|_n^k=\ii|^k_n\right\}.$$ We note that we consider $\Sigma^+$ with the usual topology, i.e. the topology generated by cylinder sets. This topology is metrizable with metric $d(\ii,\jj)=\beta^{\min\left\{k\geq0:\ii|_0^k\neq\jj|_0^k\right\}}$, where $0<\beta<1$.

We denote the composition of functions of $\Phi$ for a finite length word $\underline{i}=(i_1,\dots,i_n)\in\Sigma^*$ by $f_{\underline{i}}=f_{i_1}\circ\cdots\circ f_{i_n}$.

Now let us introduce a dynamical system $F$ acting on $\clo\times\Sigma^{+}$ by
\begin{equation*} %\label{edynsys}
F(\xv,\ii):=(f_{i_0}(\xv),\sigma\ii),
\end{equation*}
where $\mathcal{O}$ is the open and bounded set from Definition~\ref{dSSC}. Since $F$ is a hyperbolic map acting $\clo\times\Sigma^{+}$, the unique non-empty and compact set, which is $F$-invariant, is $\bigcap_{n=0}^{\infty}F^n(\clo\times\Sigma^+)=\Lambda\times\Sigma^+$.

Define $\pi_-:\Sigma^-\mapsto\Lambda$ (similarly to $\pi_+$) by
\begin{equation*}%\label{enatproj-}
\pi_-(\dots,i_{-2},i_{-1})=\lim_{n\rightarrow\infty}f_{i_{-1}}\circ \cdots\circ f_{i_{-n}}(0)=\sum_{n=1}^{\infty}A_{i_{-1}}\cdots A_{i_{-n+1}}\tv_{i_{-n}}.
\end{equation*}
If $\sigma$ is the left-shift operator on $\Sigma$ then it is easy to see that $F$ is conjugate to $\sigma$ by the projection $\pi:\Sigma\mapsto\Lambda\times\Sigma^{+}$, where $\pi(\ii):=(\pi_-(\ii_-),\ii_+)$. That is,
$$\pi\circ\sigma=F\circ\pi.$$ %Moreover, $F$ is hyperbolic.

Let $\pv=(p_1,\dots,p_N)$ be a probability vector and let $\nu=\left\{p_1,\dots,p_N\right\}^{\N}$ be the corresponding left-shift invariant and ergodic Bernoulli-probability measure on $\Sigma^{+}$. Denote by $\widehat{\nu}=\left\{p_1,\dots,p_N\right\}^{\mathbb{Z}}$ the natural extension of $\nu$ to $\Sigma$. Let us define its projection to $\Lambda\times\Sigma^{+}$ by $\widehat{\mu}:=\pi_*\widehat{\nu}=\widehat{\nu}\circ\pi^{-1}$. Then $\widehat{\mu}$ is a $F$-invariant and ergodic probability measure on $\Lambda\times\Sigma^+$, moreover $\widehat{\mu}=\mu\times\nu$, where $\mu$ is self-affine measure defined in \eqref{edefsameasure}.

For the analysis of the dimension theoretical point of view, we need an assumption for the matrices $\Alpha$, which ensures for us that there is a dynamically invariant foliation on $\clo\times\Sigma^+$.

\begin{definition}\label{ddomsplit}
We say that the set $\Alpha$ of matrices satisfies the \textnormal{dominated splitting condition} if there are constants $C,\delta>0$ such that
\[
\frac{\alpha_1(A_{\underline{i}})}{\alpha_2(A_{\underline{i}})}\geq Ce^{\delta n}\text{ for all $\underline{i}\in\Sigma^*$ with $|\underline{i}|=n$.}
\]
\end{definition}

For example, a family of matrices with strictly positive entries satisfies dominated splitting, see \cite{ABY}.

Let us define a map from $\Sigma$ to $\Alpha$ in a natural way, i.e. $A(\ii):=A_{i_0}$. Denote the product by $A^{(n)}(\ii):=A(\sigma^{n-1}\ii)\cdots A(\ii)$ for $\ii\in\Sigma$ and $n\geq1$. Now we are going to state some useful properties for set $\Alpha$ of matrices, satisfying dominated splitting.

\begin{lemma}[\cite{BG},\cite{Y}]\label{ldomsplit1}
The set $\Alpha$ of matrices satisfies the dominated splitting condition if and only if for every $\ii\in\Sigma$ there are two one-dimensional subspaces $e_{ss}(\ii),e_s(\ii)$ of $\R^2$ such that
\begin{enumerate}
  \item $A(\ii)e_i(\ii)=e_i(\sigma\ii)$ for every $\ii\in\Sigma$ and $i=s,ss$,
  \item there are constants $C,\delta>0$ such that $$\frac{\|A^{(n)}(\ii)|e_s(\ii)\|}{\|A^{(n)}(\ii)|e_{ss}(\ii)\|}\geq Ce^{\delta n}\text{ for all $\ii\in\Sigma$ and $n\geq1$.}$$
\end{enumerate}
We call the family of subspaces $e_{ss}$ \textnormal{strong stable directions}.
\end{lemma}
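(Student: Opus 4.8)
The plan is to prove the two implications separately; ``splitting $\Rightarrow$ gap'' is immediate, while ``gap $\Rightarrow$ splitting'' carries essentially all of the work. For the first: given $e_s,e_{ss}$ satisfying (1)--(2), fix $\underline i\in\Sigma^*$ with $|\underline i|=n$, pick $\ii\in\Sigma$ with $A^{(n)}(\ii)=A_{\underline i}$, and let $u_s,u_{ss}$ be unit vectors spanning $e_s(\ii)$ and $e_{ss}(\ii)$. Since $\alpha_1(A_{\underline i})$ and $\alpha_2(A_{\underline i})$ are the maximum and the minimum of $\|A_{\underline i}v\|$ over unit vectors $v$, one has $\alpha_1(A_{\underline i})\geq\|A^{(n)}(\ii)u_s\|$ and $\alpha_2(A_{\underline i})\leq\|A^{(n)}(\ii)u_{ss}\|$; dividing and invoking (2) yields $\alpha_1(A_{\underline i})/\alpha_2(A_{\underline i})\geq Ce^{\delta n}$.

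For ``gap $\Rightarrow$ splitting'' I would construct the two line fields as limits of singular directions of long one-sided products. For $\ii\in\Sigma$ let $e_{ss}^{(n)}(\ii)\in\mathbb{P}^1$ be the direction of a bottom (right) singular vector of $A^{(n)}(\ii)=A_{i_{n-1}}\cdots A_{i_0}$, and let $e_s^{(n)}(\ii)$ be the direction of a top (left) singular vector of the backward product $A_{i_{-1}}A_{i_{-2}}\cdots A_{i_{-n}}$. The basic tool is the elementary $2\times2$ fact that a nonsingular $M$ with $\alpha_2(M)/\alpha_1(M)=r$ acts on $\mathbb{P}^1$ like the projection onto its top singular direction up to an error $O_K(r)$ --- in particular, for $B$ with $\|B\|,\|B^{-1}\|\leq K$, the bottom singular direction of $BM$ and the top singular direction of $MB$ lie $O_K(r)$-close to those of $M$. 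Together with the hypothesis $\alpha_2(A^{(n)}(\ii))/\alpha_1(A^{(n)}(\ii))\leq C^{-1}e^{-\delta n}$ this shows that $(e_{ss}^{(n)}(\ii))_n$ and $(e_s^{(n)}(\ii))_n$ are Cauchy and approach their limits at rate $O(e^{-\delta n})$, uniformly in $\ii$; set $e_{ss}(\ii):=\lim_n e_{ss}^{(n)}(\ii)$, $e_s(\ii):=\lim_n e_s^{(n)}(\ii)$. As uniform limits of continuous functions, $e_{ss}$ depends continuously on $\ii_+$ and $e_s$ on $\ii_-$, and property (1) follows by applying the perturbation estimate to the identity $A(\ii)\bigl(A_{i_{-1}}\cdots A_{i_{-n}}\bigr)=A_{i_0}A_{i_{-1}}\cdots A_{i_{-n}}$ (the product defining $e_s^{(n+1)}(\sigma\ii)$), letting $n\to\infty$, and using continuity of the $\mathrm{GL}_2(\R)$-action on $\mathbb{P}^1$; the argument for $e_{ss}$ is symmetric.

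Both property (2) and the fact that $\{e_s,e_{ss}\}$ is a genuine splitting reduce to the uniform transversality $\inf_{\ii\in\Sigma}\angle(e_s(\ii),e_{ss}(\ii))>0$: writing $u_s$ in the right singular basis of $A^{(n)}(\ii)$, whose bottom vector is $O(e^{-\delta n})$-close to $e_{ss}(\ii)$, one gets $\|A^{(n)}(\ii)u_s\|\gtrsim\sin\angle(e_s(\ii),e_{ss}(\ii))\,\alpha_1(A^{(n)}(\ii))$ while $\|A^{(n)}(\ii)u_{ss}\|\leq\alpha_2(A^{(n)}(\ii))+O(e^{-\delta n})\alpha_1(A^{(n)}(\ii))$, so the quotient is $\gtrsim e^{\delta n}$, which is (2) with adjusted constants. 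By continuity of the bundles and compactness of $\Sigma$ it is enough to exclude $e_s(\ii)=e_{ss}(\ii)$ for a single $\ii$, and this is the step I expect to be the main obstacle: if $e_s(\ii)=e_{ss}(\ii)=:L$, then $L$ is simultaneously $O(e^{-\delta m})$-close to the image-dominant direction of every backward block $A_{i_{-1}}\cdots A_{i_{-m}}$ and $O(e^{-\delta n})$-close to the input-contracted direction of every forward block $A^{(n)}(\ii)$, and one has to show that concatenating such blocks yields words $A^{(m+n)}(\sigma^{-m}\ii)$ whose singular value ratio grows too slowly to meet the bound $Ce^{\delta(m+n)}$ for all $m,n$ --- the delicate point being to control the cross terms in the singular value estimate for the concatenation uniformly, including when an individual block is far more contracting than $e^{-\delta n}$. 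I would carry this out along the lines of Bochi--Gourmelon \cite{BG} (see also \cite{Y}); all the other steps are routine $2\times2$ linear algebra.
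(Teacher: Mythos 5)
The paper does not actually prove this lemma: it is quoted from Bochi--Gourmelon \cite{BG} and Yoccoz \cite{Y} with no argument given, so there is no in-paper proof to compare against, and your attempt has to be judged on its own. Most of it is sound. The implication ``splitting $\Rightarrow$ singular-value gap'' is complete (any finite word is realized as some $A^{(n)}(\ii)$, and $\alpha_1\geq\|A^{(n)}(\ii)|e_s(\ii)\|$, $\alpha_2\leq\|A^{(n)}(\ii)|e_{ss}(\ii)\|$). In the converse, the construction of $e_{ss}$ and $e_s$ as limits of bottom/top singular directions, the $O_K(r)$ perturbation estimate, the resulting Cauchy property at rate $e^{-\delta n}$, the invariance (1), and the reduction of (2) to the uniform transversality $\inf_{\ii}\angle(e_s(\ii),e_{ss}(\ii))>0$ are all correct and standard; by continuity of the two bundles and compactness of $\Sigma$ it does indeed suffice to rule out $e_s(\ii)=e_{ss}(\ii)$ at any single $\ii$.

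The genuine gap is exactly where you flag it: the transversality step is the mathematical content of the equivalence, and your proposal only describes the difficulty rather than resolving it. As sketched, the concatenation estimate does not close. Writing $B_m=A_{i_{-1}}\cdots A_{i_{-m}}$ and $w=A^{(n)}(\ii)B_m=A^{(n+m)}(\sigma^{-m}\ii)$, the alignment of $u_1(B_m)$ with $v_2(A^{(n)}(\ii))$ gives roughly
$\|w\|\lesssim \alpha_1(B_m)\alpha_2(A^{(n)})+\alpha_2(B_m)\alpha_1(A^{(n)})+\theta\,\alpha_1(B_m)\alpha_1(A^{(n)})$ with $\theta=O(e^{-\delta\min(m,n)})$, whence
$\alpha_1(w)/\alpha_2(w)=\|w\|^2/|\det w|\lesssim \frac{g_n}{h_m}+\frac{h_m}{g_n}+\theta^2 g_n h_m$, where $g_n,h_m$ are the gaps of the two blocks. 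To contradict $\alpha_1(w)/\alpha_2(w)\geq Ce^{\delta(m+n)}$ one must simultaneously (i) match $g_n$ and $h_m$ up to a bounded factor --- which is not automatic, since the hypothesis only bounds the gaps from below and each may grow at its own, possibly wildly different, rate --- and (ii) keep the cross term $\theta^2g_nh_m$ under control, which fails for the naive choice of $m,n$ whenever a block's gap is much larger than $e^{\delta n}$. Your appeal to ``the lines of Bochi--Gourmelon'' is an appeal to precisely the argument that handles this, so as it stands the proposal is a correct outline with the decisive step outsourced to the reference rather than a proof.
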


 We note that the dependence of the subspaces $e_i$ on $\ii\in\Sigma$ is continuous, that is $e_i:\Sigma\mapsto\mathbf{P}^1$ is continuous with the standard metrics, where $\mathbf{P}^1$ denotes the projective space, see \cite[Section~B.1]{BDV}.

\begin{lemma}[\cite{BR}]\label{ldomsplit2}
Let $\Alpha$ be a set of matrices satisfying the dominated splitting condition and let $e_{ss}(\ii),e_s(\ii)$ be the two one-dimensional subspaces of $\R^2$ defined in Lemma~\ref{ldomsplit1}. Then there exists a constant $C>0$ such that
\begin{eqnarray*}
&& C^{-1}\|A^{(n)}(\ii)|e_s(\ii)\|\leq\alpha_1(A^{(n)}(\ii))\leq C\|A^{(n)}(\ii)|e_s(\ii)\|\text{ and }\\
&& C^{-1}\|A^{(n)}(\ii)|e_{ss}(\ii)\|\leq\alpha_2(A^{(n)}(\ii))\leq C\|A^{(n)}(\ii)|e_{ss}(\ii)\|.
\end{eqnarray*}
In particular,
\begin{equation}\label{elyapint}
\ly{i}=-\lim_{n\rightarrow\infty}\frac{1}{n}\log\|A^{(n)}(\ii)|e_i(\ii)\|=-\int\log\|A(\ii)|e_i(\ii)\|d\widehat{\nu}(\ii)\text{ for $\widehat{\nu}$-a.e. $\ii$ and }i=s,ss.
\end{equation}
\end{lemma}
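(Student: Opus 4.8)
The plan is to derive both two-sided estimates, and hence \eqref{elyapint}, from a single uniform geometric input: the invariant line fields $e_s$ and $e_{ss}$ are uniformly transverse. First I would note that $e_s(\ii)\neq e_{ss}(\ii)$ for every $\ii\in\Sigma$, since equality would contradict the growth gap in part (2) of Lemma~\ref{ldomsplit1}. Because $\ii\mapsto(e_s(\ii),e_{ss}(\ii))$ is continuous and $\Sigma$ is compact, the function $\theta(\ii):=\angle(e_s(\ii),e_{ss}(\ii))$ is continuous and strictly positive, hence bounded below by some $\theta_0>0$. Consequently, decomposing $v\in\R^2$ as $v=v_s+v_{ss}$ along the two lines $e_s(\ii),e_{ss}(\ii)$ gives a uniform constant $C_1\geq1$ with $C_1^{-1}(\|v_s\|+\|v_{ss}\|)\leq\|v\|\leq\|v_s\|+\|v_{ss}\|$.

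For the first line of the lemma, the bound $\|A^{(n)}(\ii)|e_s(\ii)\|\leq\alpha_1(A^{(n)}(\ii))=\|A^{(n)}(\ii)\|$ is immediate. For the reverse inequality I would take a unit vector $v$, split $v=v_s+v_{ss}$ (so $\|v_s\|,\|v_{ss}\|\leq C_1$ by Step~1), estimate $\|A^{(n)}(\ii)v\|\leq C_1(\|A^{(n)}(\ii)|e_s(\ii)\|+\|A^{(n)}(\ii)|e_{ss}(\ii)\|)$, and then absorb the second term using $\|A^{(n)}(\ii)|e_{ss}(\ii)\|\leq C^{-1}e^{-\delta n}\|A^{(n)}(\ii)|e_s(\ii)\|\leq C^{-1}\|A^{(n)}(\ii)|e_s(\ii)\|$ from Lemma~\ref{ldomsplit1}(2); this yields $\alpha_1(A^{(n)}(\ii))\leq C_1(1+C^{-1})\|A^{(n)}(\ii)|e_s(\ii)\|$. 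For the second line I would use $\alpha_1(M)\alpha_2(M)=|\det M|$ together with the observation that the parallelogram spanned by unit vectors of $e_s(\ii),e_{ss}(\ii)$ has area $\sin\theta(\ii)\in[\sin\theta_0,1]$, while its image under $A^{(n)}(\ii)$ is the parallelogram spanned by vectors of $e_s(\sigma^n\ii)$ and $e_{ss}(\sigma^n\ii)$, again at angle at least $\theta_0$, hence of area lying between $\sin\theta_0$ and $1$ times $\|A^{(n)}(\ii)|e_s(\ii)\|\,\|A^{(n)}(\ii)|e_{ss}(\ii)\|$; since areas scale by $|\det A^{(n)}(\ii)|$ this makes $|\det A^{(n)}(\ii)|$ comparable to $\|A^{(n)}(\ii)|e_s(\ii)\|\,\|A^{(n)}(\ii)|e_{ss}(\ii)\|$ uniformly. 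Dividing by the $\alpha_1$ comparison just obtained gives the claimed estimate for $\alpha_2(A^{(n)}(\ii))=|\det A^{(n)}(\ii)|/\alpha_1(A^{(n)}(\ii))$, and taking the largest of the finitely many constants produces the single $C$ in the statement.

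For \eqref{elyapint}, I would use the cocycle relation $A^{(k+1)}(\ii)=A(\sigma^k\ii)A^{(k)}(\ii)$ together with the invariance $A^{(k)}(\ii)e_i(\ii)=e_i(\sigma^k\ii)$ to factor $\|A^{(n)}(\ii)|e_i(\ii)\|=\prod_{k=0}^{n-1}\|A(\sigma^k\ii)|e_i(\sigma^k\ii)\|$, so that $\frac1n\log\|A^{(n)}(\ii)|e_i(\ii)\|$ is the Birkhoff average of the function $g_i(\ii):=\log\|A(\ii)|e_i(\ii)\|$, which is continuous, hence bounded and $\widehat{\nu}$-integrable, on the compact space $\Sigma$. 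Birkhoff's ergodic theorem for the ergodic system $(\Sigma,\sigma,\widehat{\nu})$ then gives $\frac1n\log\|A^{(n)}(\ii)|e_i(\ii)\|\to\int g_i\,d\widehat{\nu}$ for $\widehat{\nu}$-a.e. $\ii$. The two displayed comparisons force $\frac1n\log\alpha_i(A^{(n)}(\ii))$ and $\frac1n\log\|A^{(n)}(\ii)|e_i(\ii)\|$ to converge to the same limit (they differ by $\frac1n\log(\text{const})$); identifying this limit with $-\ly{i}$ via the Oseledec theorem (and the standard fact that reversing the order of the matrix product leaves the Lyapunov exponents unchanged, e.g. by the subadditive ergodic theorem applied to the transposed cocycle) finishes the proof. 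I expect Step~1 — the uniform lower bound on the angle between $e_s$ and $e_{ss}$ — to be the only genuine obstacle; once uniform transversality is in hand, everything else is elementary planar geometry and bookkeeping of constants.
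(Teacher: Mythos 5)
Your argument is correct, but there is nothing in the paper to compare it against: the paper states this lemma with the attribution [BR] (Bochi--Rams) and gives no proof, so you have supplied a self-contained argument where the author relies on a citation. What you do is the standard derivation of the lemma from domination, and each step checks out: the uniform transversality of $e_s$ and $e_{ss}$ follows as you say from $e_s(\ii)\neq e_{ss}(\ii)$ (forced by Lemma~\ref{ldomsplit1}(2)) together with continuity of $\ii\mapsto e_i(\ii)$ and compactness of $\Sigma$ (the paper even records the stronger uniform transversality of $e_s(\ii)$ and $e_{ss}(\jj)$ for all pairs, via the multicone of Lemma~\ref{ldomsplit3b}); the $\alpha_1$ comparison via splitting a unit vector along the two line fields and absorbing the $e_{ss}$-term with the domination inequality is right; and the determinant/parallelogram argument for $\alpha_2$, using that $A^{(n)}(\ii)$ maps $e_i(\ii)$ to $e_i(\sigma^n\ii)$ and that the image angle is again bounded below, is clean. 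For \eqref{elyapint}, the multiplicativity $\|A^{(n)}(\ii)|e_i(\ii)\|=\prod_{k=0}^{n-1}\|A(\sigma^k\ii)|e_i(\sigma^k\ii)\|$ is exactly the content of the paper's Lemma~\ref{lmultiptoLY2}, and Birkhoff plus the two-sided comparison gives the claim. The one point that genuinely needs the care you gave it is the identification with $\ly{i}$: the exponents are defined in the introduction via the forward products $A_{i_0}\cdots A_{i_{n-1}}$, while $A^{(n)}(\ii)=A_{i_{n-1}}\cdots A_{i_0}$ is the reversed product, so one must invoke either exchangeability of the Bernoulli measure or $\alpha_j(M)=\alpha_j(M^*)$ together with Kingman to see the limits coincide; you flagged this correctly.
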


The dominated splitting property implies that the Lyapunov exponents are always separated. Actually, $\ly{s}+\delta\leq\ly{ss}$ for any self-affine measure $\mu$, where $\delta$ is in Definition~\ref{ddomsplit}.

Let $C_+:=\left\{(x,y)\in\R^2\backslash\{(0,0)\}:xy\geq0\right\}$ be the {\it standard positive cone}. A {\it cone} is an image of $C_+$ by a linear isomorphism and a {\it multicone} is a disjoint union of finitely many cones.

\begin{lemma}[\cite{ABY}, \cite{BG}]\label{ldomsplit3a}
A set $\Alpha$ of matrices satisfies dominated splitting condition if and only if $\Alpha$ has a \textnormal{forward invariant multicone}, i.e there is a multicone $M$ such that $\bigcup_{i=1}^NA_i(M)\subset M^o$, where $M^o$ denotes the interior of $M$.
\end{lemma}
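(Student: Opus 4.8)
The plan is to move everything to the projective line $\mathbf{P}^1\cong S^1$, on which a cone is a closed arc, a multicone a finite disjoint union of closed arcs, and every linear isomorphism carrying one cone into the interior of another acts as a uniform contraction of the associated Hilbert (cross--ratio) metrics, by Birkhoff's theorem.

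\emph{From a forward invariant multicone to dominated splitting.} Assume $\bigcup_i A_i(M)\subset M^o$ with $M=I_1\sqcup\dots\sqcup I_m$. Compactness of $\overline{M}$ yields a uniform $\lambda<1$ such that each $A_i$ contracts by at least $\lambda$ the Hilbert metric of the (unique) component of $M$ into which it maps a given component of $\overline{M}$; hence $A^{(n)}(\ii)$ carries each component of $\overline{M}$ into a single arc of diameter $O(\lambda^n)$, sitting inside the fixed compact set $\bigcup_i A_i(\overline{M})\subset M^o$. For $\ii\in\Sigma$ the decreasing sets $A_{i_{-1}}A_{i_{-2}}\cdots A_{i_{-n}}(\overline{M})$ then shrink, as $n\to\infty$, to a single line $e_s(\ii)\in M^o$ --- that the limit is a \emph{line} rather than merely a compact set is the delicate point, discussed below --- and $e_s$ depends continuously on $\ii$. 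Since $A_i(\overline{M})\subset M^o$ for all $i$, the complement $\mathbf{P}^1\setminus M^o$ is a forward invariant multicone for $\{A_i^{-1}\}$, and the symmetric construction (using the future coordinates and the inverse matrices) produces a line $e_{ss}(\ii)\in\mathbf{P}^1\setminus\overline{M}$; in particular $e_s(\ii)$ and $e_{ss}(\ii)$ remain in the two disjoint compact sets $\bigcup_i A_i(\overline{M})$ and $\bigcup_i A_i^{-1}(\mathbf{P}^1\setminus M^o)$, so the angle between them is bounded below by a fixed $\theta_0>0$. A direct computation from the definitions gives $A_{i_0}e_s(\ii)=e_s(\sigma\ii)$ and $A_{i_0}e_{ss}(\ii)=e_{ss}(\sigma\ii)$, which is part~(1) of Lemma~\ref{ldomsplit1}. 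For part~(2): the projective transformation induced by $A^{(n)}(\ii)$ sends each component of the fixed arc $\overline{M}$ into arcs of diameter $O(\lambda^n)$ inside $M^o$, which bounds its distortion on each component, so its projective derivative at the line $e_s(\ii)\in M^o$ is $O(\lambda^n)$; as that derivative is comparable --- through the uniform angle $\theta_0$ --- to $\|A^{(n)}(\ii)|e_{ss}(\ii)\|/\|A^{(n)}(\ii)|e_s(\ii)\|$, we get $\|A^{(n)}(\ii)|e_s(\ii)\|/\|A^{(n)}(\ii)|e_{ss}(\ii)\|\ge Ce^{\delta n}$ for suitable $C,\delta>0$, which by Lemma~\ref{ldomsplit1} is the dominated splitting condition.

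\emph{From dominated splitting to a forward invariant multicone.} Let $e_s,e_{ss}\colon\Sigma\to\mathbf{P}^1$ be as in Lemma~\ref{ldomsplit1} and put $K_s:=e_s(\Sigma)$, $K_{ss}:=e_{ss}(\Sigma)$, compact subsets of $\mathbf{P}^1$. As is standard for a dominated splitting, $e_s(\ii)$ depends only on $\ii_-$ and $e_{ss}(\ii)$ only on $\ii_+$, so a point of $K_s\cap K_{ss}$ would be equal to $e_s(\jj)=e_{ss}(\jj)$ for a suitable $\jj\in\Sigma$, contradicting part~(2) of Lemma~\ref{ldomsplit1} (the ratio there tends to infinity, so the two subspaces are distinct); hence $K_s\cap K_{ss}=\emptyset$ and $g:=d(K_s,K_{ss})>0$. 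For a small $\varepsilon<g/2$ I would let $M$ be the closed $\varepsilon$-neighbourhood of $K_s$ in $\mathbf{P}^1$: it is a multicone, since only finitely many complementary gaps of $K_s$ have length exceeding $2\varepsilon$, and it is disjoint from $K_{ss}$. By part~(1) of Lemma~\ref{ldomsplit1}, $A_{\underline{i}}(K_s)\subseteq K_s$ for every finite word $\underline{i}$; and part~(2) together with Lemma~\ref{ldomsplit2} shows that for $|\underline{i}|=n_0$ the matrix $A_{\underline{i}}$ has its most-contracted direction within $O(e^{-\delta n_0})$ of the relevant element of $K_{ss}$ and its most-expanded image within $O(e^{-\delta n_0})$ of $K_s$. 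Consequently any $v\in M$, being at angular distance at least $g/2$ from that most-contracted direction, is mapped by $A_{\underline{i}}$ to within $O(e^{-\delta n_0})$ of $K_s$, hence into $M^o$, provided $n_0$ is chosen large (with $\varepsilon$ fixed beforehand). Thus $M$ is an $n_0$-step forward invariant multicone, and one obtains a one-step one by the standard device of first enlarging $M$ slightly so that the $n_0$-step condition still holds and then replacing it by $\widetilde{M}:=\bigcup_{0\le|\underline{i}|<n_0}A_{\underline{i}}(M)$, a finite union of arcs for which $\bigcup_i A_i(\widetilde{M})\subset\widetilde{M}^o$ follows formally.

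\emph{The main obstacle.} The genuinely non-routine point is to show that the nested intersection defining $e_s(\ii)$ collapses to a single line, given that a single $A_i$ may carry one component of $M$ into a \emph{different} component. The mechanism: a projective transformation of $\mathbf{P}^1$ has at most two fixed points, hence at most one attracting fixed point, so each $A_i$ preserves at most one component of $M$; the induced ``transition map'' on the finite set of components therefore has a unique fixed component and a proper image, and one checks that every sufficiently long composition of such maps has singleton image --- which forces $A_{i_{-1}}A_{i_{-2}}\cdots A_{i_{-n}}(\overline{M})$ to lie inside one arc of diameter $O(\lambda^n)$ for large $n$. The same combinatorics with components is what makes the converse slightly delicate as well (one has to choose $\varepsilon$ so that each arc of $M$ is sent into a single arc, and to check that the $n_0$-step-to-one-step enlargement preserves the multicone property); the uniformity of the constants in Definition~\ref{ddomsplit} is exactly what allows a single $n_0$ and a single $\varepsilon$ to work.
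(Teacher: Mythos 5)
The paper does not prove this lemma --- it is quoted from [ABY] and [BG] --- so there is no internal argument to compare with; your route (Birkhoff cone contraction for one direction, a neighbourhood of $e_s(\Sigma)$ for the other) is the standard one from those references, and the second direction, ``domination $\Rightarrow$ multicone'', is essentially correct as sketched (the $\varepsilon$-neighbourhood of $K_s$ is indeed a finite disjoint union of closed arcs, the $n_0$-step invariance follows from the exponential alignment of singular directions with $K_s$ and $K_{ss}$, and the $n_0$-step-to-one-step device works with the nested-enlargement bookkeeping you allude to).

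The gap is in the first direction, at exactly the step you single out as the crux. From ``each $A_i$ has at most one attracting fixed point, hence each transition map $\tau_i$ on the set of components has a unique fixed point and a proper image'' you conclude that ``every sufficiently long composition of such maps has singleton image''. That inference is false for abstract maps of a finite set: on $\{1,2,3\}$ take $\tau_1(1,2,3)=(1,1,2)$ and $\tau_2(1,2,3)=(2,3,3)$; each has a unique fixed point and a proper image, yet $\tau_2\circ\tau_1$ sends $(1,2,3)$ to $(2,2,3)$, fixes both $2$ and $3$, and so all its powers have image $\{2,3\}$, never a singleton. What saves the statement is geometric, not combinatorial: \emph{every} product $A_{\underline{i}}$ also maps $M$ into $M^o$, hence also has at most one attracting fixed point, so every element of the generated semigroup of transition maps has a unique fixed point; one then needs a further (short but genuine) recurrence argument --- if arbitrarily long products had image of size $r\ge 2$, some product would permute an $r$-element set of components, and a power of it would have $r$ fixed points, a contradiction. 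Without this your $e_s(\ii)$ is a priori a finite set of lines rather than one, and Lemma~\ref{ldomsplit1} cannot be invoked. A cleaner fix avoids the issue altogether: you do not need $e_s$ or $e_{ss}$ to get domination. Your own Hilbert-metric estimate bounds the projective derivative of $A^{(n)}(\ii)$ at an \emph{arbitrary} $\theta=[v]\in M$ by $C\lambda^n$; since that derivative equals $|\det A^{(n)}(\ii)|\,\|v\|^2/\|A^{(n)}(\ii)v\|^2\ge \alpha_1\alpha_2/\alpha_1^2=\alpha_2(A^{(n)}(\ii))/\alpha_1(A^{(n)}(\ii))$, the inequality $\alpha_1/\alpha_2\ge C^{-1}\lambda^{-n}$ follows at once, which is Definition~\ref{ddomsplit}.
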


Note that if $M$ is a forward-invariant multicone w.r.t $\Alpha=(A_1,\dots,A_N)$ then the closure of its complement is {\it backward-invariant multicone}, i.e. forward-invariant for $\Alpha^{-1}=(A_1^{-1},\dots,A_N^{-1})$.

\begin{lemma}[\cite{BR}]\label{ldomsplit3b}
Let $\Alpha$ be a set of matrices satisfying the dominated splitting condition and let $M$ be a forward-invariant multicone. Then for every $\ii\in\Sigma$
$$e_s(\ii)=\bigcap_{n=1}^{\infty}A_{i_{-1}}\cdots A_{i_{-n}}(M)\text{ and }e_{ss}(\ii)=\bigcap_{n=1}^{\infty}A_{i_{0}}^{-1}\cdots A_{i_{n-1}}^{-1}(\overline{M^c}),$$
where $M^c$ denotes the complement of $M$. In particular, $e_s(\ii)$ depends only on $\ii_-$ and $e_{ss}(\ii)$ depends only on $\ii_+$.
\end{lemma}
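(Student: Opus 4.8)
The plan is to carry everything out in the projective line $\mathbf{P}^1$, where a multicone becomes a finite disjoint union of non-degenerate closed arcs and each $A_i$ acts as a homeomorphism; in particular $M$ is then a non-empty compact proper subset of $\mathbf{P}^1$ with non-empty interior satisfying $\bigcup_{i=1}^N A_i(M)\subseteq M^o$. I would first set up the two nested families. Put $M_n(\ii):=A_{i_{-1}}\cdots A_{i_{-n}}(M)$; since $A_{i_{-n-1}}(M)\subseteq M^o\subseteq M$, the sets $M_n(\ii)$ decrease with $n$, so $E_s(\ii):=\bigcap_{n\geq1}M_n(\ii)$ is non-empty, compact, and visibly depends only on $\ii_-$. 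By the remark following Lemma~\ref{ldomsplit3a}, $\overline{M^c}$ is a forward-invariant multicone for $\Alpha^{-1}=(A_1^{-1},\dots,A_N^{-1})$, so the identical argument applied to $\Alpha^{-1}$ and $\overline{M^c}$ shows that $E_{ss}(\ii):=\bigcap_{n\geq1}A_{i_0}^{-1}A_{i_1}^{-1}\cdots A_{i_{n-1}}^{-1}(\overline{M^c})$ is non-empty, compact, and depends only on $\ii_+$. I would then check $A$-invariance: since $A_{i_0}$ is a homeomorphism it commutes with the nested intersections, and a shift of the index of intersection yields $A(\ii)E_s(\ii)=E_s(\sigma\ii)$ and $A(\ii)E_{ss}(\ii)=E_{ss}(\sigma\ii)$. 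Finally $E_s(\ii)\subseteq M_1(\ii)=A_{i_{-1}}(M)\subseteq M^o$, whereas $E_{ss}(\ii)\subseteq A_{i_0}^{-1}(\overline{M^c})\subseteq(\overline{M^c})^o=M^c$ (the last equality because $M$ is a finite union of non-degenerate closed arcs, so $\overline{M^o}=M$); in particular $E_s(\ii)\cap E_{ss}(\ii)=\emptyset$ for every $\ii$.

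The next step is to show that $E_s(\ii)$ and $E_{ss}(\ii)$ are single points. This rests on the uniform projective contraction carried by a \emph{strict} invariant multicone: since $\overline{\bigcup_i A_i(M)}$ is a compact subset of $M^o$, there are constants $C>0$ and $\lambda\in(0,1)$, independent of $\ii$, with $\diam(M_n(\ii))\leq C\lambda^n$ in the spherical metric of $\mathbf{P}^1$ — this is Birkhoff's contraction estimate, exactly the mechanism underlying the cone characterisation of dominated splitting in \cite{ABY},\cite{BG}. Hence $E_s(\ii)$ is a single point, depending (automatically continuously) on $\ii_-$, and applying the same to $\Alpha^{-1}$ on $\overline{M^c}$ shows that $E_{ss}(\ii)$ is a single point depending on $\ii_+$. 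At this stage $\R^2=E_s(\ii)\oplus E_{ss}(\ii)$ is an $A$-invariant splitting into two one-dimensional bundles.

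It remains to identify this splitting with the one of Lemma~\ref{ldomsplit1}, and this is the heart of the matter. Since $E_s$ is, by its very construction, the intersection of backward iterates of a forward-invariant multicone, the proof of Lemma~\ref{ldomsplit3a} (see \cite{ABY},\cite{BG}) shows that $E_s\oplus E_{ss}$ is a \emph{dominated} splitting with $E_s$ dominating $E_{ss}$. On the other hand Lemma~\ref{ldomsplit1} provides the dominated splitting $e_s\oplus e_{ss}$, and a dominated splitting of prescribed dimensions is unique (see e.g.\ \cite{BDV}); hence $E_s=e_s$ and $E_{ss}=e_{ss}$, which is the assertion. The ``in particular'' is then nothing but the dependence of $E_s$ on $\ii_-$ and of $E_{ss}$ on $\ii_+$ recorded in the first paragraph.

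I expect the obstacle to lie entirely in this last step, in two layers: showing that the nested intersections collapse to single points — which I would extract from the Hilbert/projective-metric contraction attached to a strict invariant multicone, the same estimate that proves Lemma~\ref{ldomsplit3a} — and showing that the invariant splitting so obtained is the dominated splitting $e_s\oplus e_{ss}$ rather than merely some invariant splitting, which is the uniqueness of the dominated splitting (equivalently, the fact that $e_s$ attracts every transverse line exponentially fast under forward iteration). Everything else — nestedness, non-emptiness, $A$-invariance, and the dependence on one-sided coordinates — is formal and comes down to the index bookkeeping indicated in the first paragraph.
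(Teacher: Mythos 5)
The paper offers no proof of this lemma --- it is quoted from Bochi--Rams \cite{BR} --- so there is nothing internal to compare against; your argument has to stand on its own. Its architecture (nestedness and one-sided dependence of the two intersections, $A$-invariance, collapse to a point by projective contraction, identification with the splitting of Lemma~\ref{ldomsplit1}) is the standard route and is essentially what \cite{BR} and \cite{ABY,BG} do.

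There is, however, one concrete gap in the contraction step. You claim $\diam\bigl(M_n(\ii)\bigr)\leq C\lambda^n$ and attribute it to Birkhoff's estimate, but Birkhoff contraction is a per-cone statement: when $M$ has $k\geq 2$ connected components, $M_n(\ii)=\bigcup_j A_{i_{-1}}\cdots A_{i_{-n}}(C_j)$ is a union of up to $k$ arcs, each of which shrinks exponentially, while the union could a priori consist of several tiny arcs sitting in different components of $M$ and accumulating on distinct points --- in which case $\bigcap_n M_n(\ii)$ would be a finite set rather than a line. The bound you assert for the whole multicone is true, but it needs an extra input beyond Birkhoff; the cleanest one is to show first that $M$ is disjoint from (hence, by compactness of both sets, uniformly transverse to) the strong-stable field $\{e_{ss}(\jj)\}_{\jj\in\Sigma}$: if some $e_{ss}(\jj)$ lay in a component $C$ of $M$, then $A^{(n)}(\jj)(C)$ would contain both $e_{ss}(\sigma^n\jj)$ and directions aligning with $e_s(\sigma^n\jj)$, contradicting the per-component shrinking together with the uniform transversality of $e_s$ and $e_{ss}$. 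Once $M$ is uniformly transverse to $e_{ss}$, domination forces $A_{i_{-1}}\cdots A_{i_{-n}}(\theta_n)\to e_s(\ii)$ uniformly for any $\theta_n\in M$, which simultaneously shows that \emph{all} components of $M_n(\ii)$ converge to the same point and identifies that point as $e_s(\ii)$ --- making your final appeal to uniqueness of dominated splittings unnecessary. As written, your "$\diam(M_n(\ii))\leq C\lambda^n$ by Birkhoff" papers over exactly the place where the multicone structure bites; everything else in the proposal (the nesting, the invariance bookkeeping, the passage to $\Alpha^{-1}$ and $\overline{M^c}$, and the uniqueness argument as an alternative closing step) is correct.
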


An easy consequence of Lemma~\ref{ldomsplit3a} and Lemma~\ref{ldomsplit3b} is that the included angle of $e_s(\ii),e_{ss}(\jj)$ is uniformly separated away from zero for every $\ii,\jj\in\Sigma$.

Let us denote the orthogonal projection from $\R^2$ to the subspace perpendicular to $\theta\in\mathbf{P}^1$ by $\proj_{\theta}$. For simplicity, we denote the orthogonal projection $\proj_{e_{ss}(\ii)}$ by $\pr{\ii}$. We call the family of projections of $\mu$ along the strong stable directions {\em transversal measures} and we denote by
\begin{equation}\label{etransmeasure}
\mu^T_{\ii}:=(\pr{\ii})_*\mu=\mu\circ(\pr{\ii})^{-1}.
\end{equation} Now we are ready to state our main theorem.

\begin{theorem}\label{tmain1}
Let $\Alpha=\left\{A_1, A_2,\dots, A_N\right\}$ be a finite set of contracting, non-singular $2\times2$ matrices, and let $\Phi=\left\{f_i(\xv)=A_i\xv+\tv_i\right\}_{i=1}^N$ be an iterated function system on the plane with affine mappings. Let $\nu$ be a left-shift invariant and ergodic Bernoulli-probability measure on $\Sigma^{+}$, and $\mu$ be the corresponding self-affine measure. If
\begin{enumerate}
    \item $\Alpha$ satisfies dominated splitting,
    \item $\Phi$ satisfies the strong separation condition
\end{enumerate} then $\mu$ is exact dimensional and
\begin{equation}\label{eLYformula}
\dim_H\mu=\frac{h_{\nu}}{\ly{ss}}+\left(1-\frac{\ly{s}}{\ly{ss}}\right)\dim_H\mu^{T}_{\ii}\text{ for $\nu$-almost every $\ii\in\Sigma^+$,}
\end{equation}
where $h_{\nu}$ denotes the entropy of $\nu$ and $\ly{s},\ly{ss}$ are the Lyapunov exponents, defined in \eqref{elyapint}.
\end{theorem}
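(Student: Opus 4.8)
The plan is to run the Ledrappier--Young scheme in the non-invertible self-affine setting, working on $\Lambda\times\Sigma^+$ with the map $F$ and its ergodic invariant measure $\widehat\mu=\mu\times\nu$. Two structural facts are used throughout. First, by Lemma~\ref{ldomsplit3b} the strong stable direction $e_{ss}(\ii)$ depends only on $\ii_+$, so for $\nu$-a.e.\ $\ii$ the lines of direction $e_{ss}(\ii)$ define a measurable partition $\eta_{\ii}$ of $\Lambda$; transverse to it the factor measure of $\mu$ is $\mu^T_{\ii}$, and under $F$ these partitions are consistently interchanged (Lemma~\ref{ldomsplit1}(1)), the backward leaf dynamics expanding leaves at rate $\asymp e^{n\ly{ss}}$ by \eqref{elyapint}. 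Second, the strong separation condition turns the symbolic cylinder partition into a geometric one with uniformly bounded distortion: for $\widehat\mu$-a.e.\ $\xv=\pi_+(\ii)$ the cylinder $f_{\ii|_0^{n-1}}(\Lambda)$ is an approximate parallelogram of side lengths $\asymp e^{-n\ly{s}}$ (the slow direction, close to $e_s$) and $\asymp e^{-n\ly{ss}}$ (the direction $e_{ss}(\ii)$), disjoint from its siblings, and --- since $e_s,e_{ss}$ are uniformly transverse and $\ii_+\mapsto e_{ss}(\ii)$ is H\"older (Lemmas~\ref{ldomsplit1}--\ref{ldomsplit3b}) --- a Euclidean ball $B_r(\xv)$ is comparable, up to absolute constants, to a \emph{foliated rectangle}, i.e.\ a product of a transversal interval and a leaf interval, both of length $\asymp r$. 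This reduces local-dimension estimates for $\mu$ to estimates of $\mu$ of such rectangles.

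Next I would record the recursion satisfied by the transversal measures. From $A_{i}e_{ss}(i\ii)=e_{ss}(\ii)$ (as subspaces) one gets $\pr{\ii}\circ f_{i}=\overline f_{i,\ii}\circ\pr{i\ii}$ for a one-dimensional affine map $\overline f_{i,\ii}$ whose linear part has norm $\asymp\alpha_1(A_{i})$; pushing \eqref{edefsameasure} forward gives $\mu^T_{\ii}=\sum_{i=1}^N p_i(\overline f_{i,\ii})_*\mu^T_{i\ii}$, a continuously varying family of fibered self-affine measures on the line, and an analogous fibered recursion holds for the conditional measures $\mu^{\eta}_{\xv,\ii}$ of $\mu$ along the leaves of $\eta_{\ii}$ under $F^{-1}$, with expansion $\asymp e^{n\ly{ss}}$. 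Running a Shannon--McMillan--Breiman argument on $(\Sigma,\sigma,\widehat\nu)$ together with the subadditive ergodic theorem for $\log\|A^{(n)}(\ii)|e_i(\ii)\|$ and the bounded-distortion input above, I would prove: (i) for $\nu$-a.e.\ $\ii$ the measure $\mu^T_{\ii}$ is exact dimensional, with a $\nu$-a.e.\ constant (by ergodicity) dimension $\gamma=\dim_H\mu^T_{\ii}$, and $-\tfrac1n\log\mu^T_{\ii}$ of the level-$n$ dynamical interval converges to a constant $h^T$ --- the entropy of the transversal factor system --- with $\gamma\,\ly{s}=h^T$; (ii) for $\mu$-a.e.\ $\xv$ the leaf conditionals $\mu^{\eta}_{\xv,\ii}$ are exact dimensional with a.e.\ constant dimension $\delta_{ss}$, and $\delta_{ss}\,\ly{ss}=h_\nu-h^T$. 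Step (ii) is where the Ledrappier--Young entropy-addition mechanism enters: one builds a measurable partition subordinate to $\{\eta_{\ii}\}$, increasing under $F^{-1}$ and with finite fibre entropy, and identifies its conditional entropy with $h_\nu-h^T$ via an Abramov--Rokhlin-type argument; strong separation is genuinely used here, so that the symbolic partition is mod-$\widehat\mu$ subordinate to the strong stable foliation with Jacobian $\asymp e^{-n\ly{ss}}$ along leaves.

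The assembly is then routine. For $\mu$-a.e.\ $\xv$, write $\xv=\pi_+(\ii)$ with $\ii$ $\nu$-typical and fix it; for this fixed $\ii$, $\mu$ disintegrates along the lines of direction $e_{ss}(\ii)$ as $\mu=\int\mu^{\eta}_{\cdot,\ii}\,d\mu^T_{\ii}$. Sandwiching $B_r(\xv)$ between two foliated rectangles and applying (i)--(ii),
\[
\mu(B_r(\xv))\asymp\int\mu^{\eta}_{\y,\ii}\bigl(B_r(\xv)\cap\ell_{\y}\bigr)\,d\mu^T_{\ii}(\pr{\ii}\y)\asymp r^{\delta_{ss}}\cdot r^{\gamma}=r^{\delta_{ss}+\gamma},
\]
where $\mu^{\eta}_{\y,\ii}(\cdot)\asymp r^{\delta_{ss}}$ holds for $\mu^T_{\ii}$-a.e.\ $\y$ by (ii) and $\mu^T_{\ii}$ of a transversal interval of length $\asymp r$ is $\asymp r^{\gamma}$ by (i), the uniformity that makes the integration legitimate being the bounded-distortion output of the SSC set-up. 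Hence $\mu$ is exact dimensional and
\[
\dim_H\mu=\delta_{ss}+\gamma=\frac{h_\nu-h^T}{\ly{ss}}+\gamma=\frac{h_\nu-\ly{s}\gamma}{\ly{ss}}+\gamma=\frac{h_\nu}{\ly{ss}}+\Bigl(1-\frac{\ly{s}}{\ly{ss}}\Bigr)\dim_H\mu^T_{\ii},
\]
which is \eqref{eLYformula}.

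I expect the main obstacle to be step (ii): building the increasing partition subordinate to the strong stable foliation, proving the entropy-addition identity $h_\nu=h^T+\ly{ss}\delta_{ss}$, and deducing exact dimensionality of the leaf conditionals. The delicate point is uniformity --- transferring the symbolically defined conditional measures onto the geometric leaves so that the disintegration estimate above holds with constants uniform in the transversal coordinate --- and it is exactly here that both hypotheses are indispensable: strong separation prevents fibre-wise overlaps from destroying bounded distortion, while dominated splitting guarantees (Lemmas~\ref{ldomsplit1}--\ref{ldomsplit3b}) that $e_{ss}$ and $e_s$ form a H\"older, uniformly transverse line field, so that Euclidean balls are genuinely comparable to foliated rectangles.
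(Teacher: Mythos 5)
Your plan is essentially the paper's own proof: the paper likewise establishes exact dimensionality of the conditional measures along the strong stable foliation $\xi^{ss}$ with dimension $H(F\xi^{ss}|\xi^{ss})/\ly{ss}$ (Proposition~\ref{pLY1}), of the transversal measures with dimension $(h_{\nu}-H(F\xi^{ss}|\xi^{ss}))/\ly{s}$ (Proposition~\ref{pLY2}, using precisely the pushforward recursion you describe, packaged as Lemma~\ref{ltoLY2} together with Maker's ergodic theorem), and then adds the two dimensions (Propositions~\ref{pLY3} and~\ref{pLY4}) before the same algebraic rearrangement. The one caution is that your final ``$\asymp$'' sandwich is exactly where the paper spends those last two propositions: the pointwise dimension statements are only almost-everywhere and non-uniform, so the lower bound requires Egorov plus Lebesgue density and the upper bound a separate count of the atoms of $\bigvee_{k=0}^{m-1}F^k\mathcal{P}$ meeting a leaf ball combined with Shannon--McMillan--Breiman --- the needed uniformity is not supplied by the bounded-distortion/SSC geometry you invoke.
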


We note that, \eqref{eLYformula} implies that $\dim_H\mu^T_{\ii}$ is constant for $\nu$-a.e. $\ii\in\Sigma^+$. In particular, $\mu^T_{\ii}$ is exact dimensional with constant dimension for $\nu$-a.e $\ii$, see Proposition~\ref{pLY2}.

It is a non-trivial question, how the strong separation condition can be relaxed to the open set condition (OSC). Let $A_1$ and $A_2$ be two matrices with strictly positive entries such that the IFS $\left\{f_i(\xv)=A_i\xv\right\}_{i=1,2}$ maps the closed unit square into itself and $f_1((0,1)^2)\cap f_2((0,1)^2)=\emptyset$. Then the IFS satisfies the open set condition, however its attractor is only a single point. Hence, \eqref{eLYformula} cannot hold for any self-affine measure, which are just the Dirac measure. However, we conjecture that if the attractor contains at least two points and the IFS satisfies the OSC then \eqref{eLYformula} holds. 

Since the transversal measures $\mu^{T}_{\ii}$ are the orthogonal projections of $\mu$, $\dim_H\mu^{T}_{\ii}\leq\min\left\{1,\dim_H\mu\right\}$. By \eqref{eLYformula}, simple algebraic manipulations show that
\begin{equation}\label{elyapHdimequal}
\dim_H\mu=\ldim\mu\Leftrightarrow\dim_H\mu^{T}_{\ii}=\min\left\{1,\dim_H\mu\right\}\text{ for $\nu$-a.e. $\ii\in\Sigma^+$.}
\end{equation} If the distribution of the strong stable directions $e_{ss}$ has large dimension then one can claim that the right-hand side of \eqref{elyapHdimequal} holds. Let us consider the map $e_{ss}:\Sigma^+\mapsto\mathbf{P}^1$ which maps an $\ii\in\Sigma^+$ to the element of the projective space associated to $e_{ss}(\ii)$. Let us define the push-down measure of $\nu$ by $e_{ss}$ on $\mathbf{P}^1$ as
\begin{equation}\label{edirections}
\nu_{ss}:=(e_{ss})_*\nu=\nu\circ(e_{ss})^{-1}.
\end{equation}

\begin{theorem}\label{tmain1b}
Let $\Alpha=\left\{A_1, A_2,\dots, A_N\right\}$ be a finite set of contracting, non-singular $2\times2$ matrices, and let $\Phi=\left\{f_i(\xv)=A_i\xv+\tv_i\right\}_{i=1}^N$ be an iterated function system on the plane with affine mappings. Let $\nu$ be a left-shift invariant and ergodic Bernoulli-probability measure on $\Sigma^{+}$, and $\mu$ be the corresponding self-affine measure. If
\begin{enumerate}
    \item $\Alpha$ satisfies dominated splitting,
    \item $\Phi$ satisfies the strong separation condition,
    \item $\dim_H\nu_{ss}\geq\min\left\{1,\ldim\mu\right\}$
\end{enumerate}then
\begin{equation*}%\label{elyapdimHdim}
\dim_H\mu=\ldim\mu=\min\left\{\frac{h_{\nu}}{\ly{s}},1+\frac{h_{\nu}-\ly{s}}{\ly{ss}}\right\}.
\end{equation*}
\end{theorem}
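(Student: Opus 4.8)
The plan is to combine the Ledrappier--Young formula from Theorem~\ref{tmain1} with the characterisation \eqref{elyapHdimequal}: it suffices to show that condition~(3), namely $\dim_H\nu_{ss}\geq\min\{1,\ldim\mu\}$, forces $\dim_H\mu^T_{\ii}=\min\{1,\dim_H\mu\}$ for $\nu$-a.e.\ $\ii\in\Sigma^+$. Since Jordan--Pollicott--Simon give $\dim_H\mu\leq\ldim\mu$ always, and since Theorem~\ref{tmain1} already yields exact dimensionality and the formula \eqref{eLYformula}, the entire content of the theorem reduces to a lower bound on the dimension of the transversal measures $\mu^T_{\ii}=(\pr{\ii})_*\mu$, which are orthogonal projections of $\mu$ onto the line perpendicular to the strong stable direction $e_{ss}(\ii)$.

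First I would set $s:=\dim_H\mu$ and $t:=\min\{1,s\}$, and aim to prove $\dim_H\mu^T_{\ii}\geq t$ for $\nu$-a.e.\ $\ii$; the reverse inequality is automatic because a projection cannot increase dimension and the image lies in a line. The natural tool is a projection theorem of Marstrand--Mattila type, but applied in the parametrised family $\{\pr{\ii}\}_{\ii\in\Sigma^+}$ rather than over all directions: one wants to say that for a "large" set of directions $\theta$ the projection $\proj_\theta\mu$ has dimension $\min\{1,\dim_H\mu\}$, and then transfer this to $\nu$-a.e.\ $\ii$ using that the distribution $\nu_{ss}$ of $e_{ss}(\ii)$ has dimension at least $t$. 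Concretely, I would use that the exceptional set of directions $\theta$ for which $\dim_H\proj_\theta\mu < t$ has Hausdorff dimension $\leq \max\{0,\dim_H\mu-1\}<t$ whenever $s<2$ (Falconer--Howroyd / Mattila), hence $\nu_{ss}$-null by the hypothesis $\dim_H\nu_{ss}\geq t$; so for $\nu$-a.e.\ $\ii$ the direction $e_{ss}(\ii)$ is non-exceptional and $\dim_H\mu^T_{\ii}=t$. In the degenerate case $s=2$ one has $t=1$ and the statement is that almost every projection is one-dimensional, which again holds off a zero-dimensional exceptional set. Plugging $\dim_H\mu^T_{\ii}=\min\{1,\dim_H\mu\}$ into \eqref{elyapHdimequal} gives $\dim_H\mu=\ldim\mu$, and the displayed formula $\min\{h_\nu/\ly{s},\,1+(h_\nu-\ly{s})/\ly{ss}\}$ is then just the definition of $\ldim\mu$ together with the fact that the first term $h_\nu/\ly{ss}$ in $\ldim\mu=\min\{2,h_\nu/\ly{s},1+(h_\nu-\ly{s})/\ly{ss}\}$ is never the minimum here — more precisely one checks $h_\nu/\ly{ss}\le 1+(h_\nu-\ly{s})/\ly{ss}$ always, using $\ly{s}\le\ly{ss}$, so the minimum is as claimed (and the value $2$ is excluded since each of the other two terms is at most... — I would simply note $\ldim\mu=\min\{h_\nu/\ly s,\,1+(h_\nu-\ly s)/\ly{ss}\}$ whenever this quantity is $\le 2$, and handle the $\ge2$ case by the same projection argument giving $\dim_H\mu=2$).

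The main obstacle is the transfer step: classical projection theorems control $\proj_\theta\mu$ for Lebesgue-a.e.\ $\theta$ or bound the exceptional set in Hausdorff dimension, but here the relevant measure on directions is the highly singular $\nu_{ss}$, so I need the \emph{dimension} of the exceptional set of directions to be strictly below $\dim_H\nu_{ss}$ and I must know that $\nu_{ss}$ gives zero mass to sets of smaller dimension — which is exactly what $\dim_H\nu_{ss}\geq\min\{1,\ldim\mu\}$ buys, provided the exceptional-set bound is in terms of $\dim_H\mu$ and hence $\le\ldim\mu$. This matching is slightly delicate when $\dim_H\mu$ is not yet known to equal $\ldim\mu$ (it is what we are proving), so I would run the argument with the a priori bound $\dim_H\mu\le\ldim\mu$: the Falconer exceptional-set estimate then gives exceptional dimension $\le\max\{0,\dim_H\mu-1\}\le\max\{0,\ldim\mu-1\}<\min\{1,\ldim\mu\}\le\dim_H\nu_{ss}$, closing the loop. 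A secondary technical point is that $e_{ss}(\ii)$ depends only on $\ii_+$ (Lemma~\ref{ldomsplit3b}) while $\mu$ is the projection of $\nu$ via $\pi_+$, so $e_{ss}$ and the "target" measure $\mu$ are built from the same randomness; one must verify that this does not create a spurious correlation undermining the genericity of the direction — this is handled because $\mu$ itself is a fixed deterministic measure (the self-affine measure), independent of the particular $\ii$, so the exceptional-set statement about $\proj_\theta\mu$ is a statement about a \emph{fixed} $\mu$ and an \emph{arbitrary} $\theta$, after which one merely substitutes $\theta=e_{ss}(\ii)$ and invokes $\nu_{ss}$-nullity of the exceptional set.
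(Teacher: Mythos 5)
Your overall reduction is the right one and matches the paper: invoke Theorem~\ref{tmain1} and the equivalence \eqref{elyapHdimequal}, so that everything comes down to showing $\dim_H\mu^T_{\ii}=\min\{1,\dim_H\mu\}$ for $\nu$-a.e.\ $\ii$, and use $\dim_H\mu\le\ldim\mu$ to compare with hypothesis~(3). But the key projection step as you run it has a genuine gap. You claim the exceptional set $\{\theta:\dim_H(\proj_\theta)_*\mu<\min\{1,\dim_H\mu\}\}$ has Hausdorff dimension at most $\max\{0,\dim_H\mu-1\}$. That bound is only available in the regime $\dim_H\mu>1$ (Falconer's $2-\dim_H\mu$ estimate, which the paper does use, but in Theorem~\ref{tmain1c}, under the stronger hypothesis $\dim_H\nu_{ss}+\dim_H\mu>2$). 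When $\dim_H\mu\le 1$ the correct Kaufman--Mattila bound for the set of directions where the projected dimension drops below $\sigma\le\dim_H\mu$ is $\sigma$, and this is sharp; so the exceptional set can have dimension as large as $\dim_H\mu=\min\{1,\dim_H\mu\}$, which is \emph{not} strictly below $\dim_H\nu_{ss}$ under hypothesis~(3). Your strict inequality ``$\le\max\{0,\ldim\mu-1\}<\min\{1,\ldim\mu\}$'' therefore fails exactly in the sub-one-dimensional regime --- which is not a corner case: it is the Hueter--Lalley regime of Theorem~\ref{tHL}, the main application. A second, independent problem is the transfer ``$\dim_HE<\dim_H\nu_{ss}$ hence $\nu_{ss}(E)=0$'': with the paper's definition $\dim_H\lambda=\esssup\underline{d}_\lambda=\inf\{\dim_HA:\lambda(A^c)=0\}$, a measure can charge sets of dimension strictly smaller than $\dim_H\lambda$, so nullity of the exceptional set does not follow from a comparison of Hausdorff dimensions alone; you would need a lower-local-dimension (Frostman-type) statement for $\nu_{ss}$, which is not what hypothesis~(3) asserts.

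The paper avoids both difficulties by proving a measure-relative Kaufman estimate, Lemma~\ref{lmarstrand}: via Egorov one restricts $\nu_{ss}$ and $\mu$ to positive-measure pieces satisfying uniform Frostman bounds, shows the $(s-\varepsilon)$-energy of the projected measure is finite for $\lambda'$-a.e.\ direction, and concludes only that $\dim_H\mu^T_{\ii}\ge\min\{1,\dim_H\mu\}-\varepsilon$ on a set of directions of \emph{positive} $\nu_{ss}$-measure. That weaker conclusion suffices because Theorem~\ref{tmain1} (via \eqref{eLYformula}) forces $\dim_H\mu^T_{\ii}$ to be $\nu$-a.e.\ constant, so a bound on a positive-measure set upgrades automatically to an almost-everywhere bound; the $\varepsilon$ is then removed at the end. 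If you want to salvage your route, you must either (i) replace your exceptional-set bound by this energy argument relative to $\nu_{ss}$, or (ii) assume a lower bound on the essential infimum of the local dimension of $\nu_{ss}$ and apply the $\sigma$-level Kaufman bounds for a sequence $\sigma_n\uparrow\min\{1,\dim_H\mu\}$ --- but in either case you still need the a.e.\ constancy of $\dim_H\mu^T_{\ii}$ from Theorem~\ref{tmain1}, which your write-up never invokes.
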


The proof of Theorem~\ref{tmain1b} is based on the idea of Ledrappier \cite[Lemma~1]{L}. It uses an extension of the result of Marstrand \cite{Ma}, which was obtained by Kaufman~\cite{K}. Kaufman~\cite{K} showed that for any Borel subset $A$ of $\R^2$ the exceptional set of directions, where the Hausdorff dimension of orthogonal projection drops, has dimension at most $\min\left\{1,\dim_HA\right\}$. We use this phenomena for orthogonal projections of measures. Because of later usage we show a modified version in Lemma~\ref{lmarstrand}.

\begin{proof}[Proof of Theorem~\ref{tmain1b}]
By Theorem~\ref{tmain1}, we know that $\dim_H\mu^{T}_{\ii}$ is a constant for $\nu$-almost every $\ii\in\Sigma^+$. Using Lemma~\ref{lmarstrand} we have that for every $\varepsilon>0$ there exists a set $A\subseteq\Sigma^+$ such that $\nu(A)>0$ and for every $\ii\in A$ $\dim_H\mu^{T}_{\ii}\geq\min\left\{1,\dim_H\mu\right\}-\varepsilon.$ This implies that $\dim_H\mu^{T}_{\ii}\geq\min\left\{1,\dim_H\mu\right\}-\varepsilon$ for $\nu$-almost every $\ii\in\Sigma^+$. Since $\varepsilon>0$ was arbitrary we get $$\dim_H\mu^{T}_{\ii}=\min\left\{1,\dim_H\mu\right\}\text{ for $\nu$-almost every $\ii\in\Sigma^+$}.$$ The statement of the theorem follows by \eqref{elyapHdimequal}.
\end{proof}

Another upper estimate on the dimension of exceptional directions, where the dimension of orthogonal projection of Borel subsets $A$ of $\R^2$ drops, is $\min\left\{1,2-\dim_HA\right\}$. This result was showed by Falconer~\cite{Fex}. We can use this estimate for the orthogonal projections of self-affine measures to ensure that the Hausdorff and Lyapunov dimension coincide. We adapt here the recent result of Falconer and Kempton~\cite{FK} for self-affine measures.

\begin{theorem}\label{tmain1c}
	Let $\Alpha=\left\{A_1, A_2,\dots, A_N\right\}$ be a finite set of contracting, non-singular $2\times2$ matrices, and let $\Phi=\left\{f_i(\xv)=A_i\xv+\tv_i\right\}_{i=1}^N$ be an iterated function system on the plane with affine mappings. Let $\nu$ be a left-shift invariant and ergodic Bernoulli-probability measure on $\Sigma^{+}$, and $\mu$ be the corresponding self-affine measure. If
	\begin{enumerate}
		\item $\Alpha$ satisfies dominated splitting,
		\item $\Phi$ satisfies the strong separation condition,
		\item $\dim_H\nu_{ss}+\dim_H\mu>2$ \label{cmain1c}
	\end{enumerate}then
	\begin{equation}\label{elyapHdim2}
	\dim_H\mu=\ldim\mu=1+\frac{h_{\nu}-\ly{s}}{\ly{ss}}>1.
	\end{equation}		
\end{theorem}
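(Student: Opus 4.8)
The plan is to mimic the proof of Theorem~\ref{tmain1b}, but to replace Kaufman's exceptional-set bound by Falconer's bound $\min\{1,2-\dim_HA\}$, which is the estimate encoded (for orthogonal projections of measures) in Lemma~\ref{lmarstrand}. First I would observe that hypothesis~(\ref{cmain1c}) already forces $\dim_H\mu>1$. Indeed $\nu_{ss}$ is a probability measure carried by the one-dimensional space $\mathbf{P}^1$, hence $\dim_H\nu_{ss}\le1$, and so $\dim_H\mu>2-\dim_H\nu_{ss}\ge1$. Consequently $\min\{1,\dim_H\mu\}=1$ and $\min\{1,2-\dim_H\mu\}=2-\dim_H\mu<\dim_H\nu_{ss}$.

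By Theorem~\ref{tmain1} the measure $\mu$ is exact dimensional, and (by the remark following Theorem~\ref{tmain1} and Proposition~\ref{pLY2}) $\dim_H\mu^T_{\ii}$ is $\nu$-almost everywhere equal to a constant; since $\mu^T_{\ii}$ is an orthogonal projection of $\mu$, that constant is at most $\min\{1,\dim_H\mu\}=1$. Next I would apply Lemma~\ref{lmarstrand} in its Falconer form: since $\dim_H\nu_{ss}+\dim_H\mu>2$, for every $\varepsilon>0$ there is a set $A\subseteq\Sigma^+$ with $\nu(A)>0$ such that $\dim_H\mu^T_{\ii}\ge\min\{1,\dim_H\mu\}-\varepsilon=1-\varepsilon$ for all $\ii\in A$. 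Because $\dim_H\mu^T_{\ii}$ is $\nu$-a.e.\ constant, this forces $\dim_H\mu^T_{\ii}\ge1-\varepsilon$ for $\nu$-a.e.\ $\ii$, and letting $\varepsilon\downarrow0$ gives $\dim_H\mu^T_{\ii}=1=\min\{1,\dim_H\mu\}$ for $\nu$-a.e.\ $\ii\in\Sigma^+$.

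Finally I would read off the conclusion. By \eqref{elyapHdimequal}, the relation $\dim_H\mu^T_{\ii}=\min\{1,\dim_H\mu\}$ for $\nu$-a.e.\ $\ii$ is equivalent to $\dim_H\mu=\ldim\mu$. To pin down the common value, substitute $\dim_H\mu^T_{\ii}=1$ into the Ledrappier--Young formula \eqref{eLYformula}, obtaining $\dim_H\mu=\frac{h_{\nu}}{\ly{ss}}+\bigl(1-\frac{\ly{s}}{\ly{ss}}\bigr)=1+\frac{h_{\nu}-\ly{s}}{\ly{ss}}$; and this is strictly larger than $1$ because $\dim_H\mu>1$. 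Together with $\dim_H\mu=\ldim\mu$ this is exactly \eqref{elyapHdim2}.

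The short argument above rests entirely on Lemma~\ref{lmarstrand}, where the real work is done: Falconer's estimate is stated for projections of sets, and transferring it to orthogonal projections of the measure $\mu$ is the main technical point. There is also a measure-theoretic subtlety worth flagging --- a set of directions of Hausdorff dimension $\le2-\dim_H\mu$ may still carry positive $\nu_{ss}$-mass (for instance if $\nu_{ss}$ has atoms), so one cannot merely assert that the exceptional set of directions is $\nu_{ss}$-null; instead one produces, as Lemma~\ref{lmarstrand} does, a positive-$\nu$-measure set of ``good'' directions and then uses the exact-dimensionality from Theorem~\ref{tmain1} to upgrade ``positive measure'' to ``$\nu$-almost every''.
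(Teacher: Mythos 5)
Your overall skeleton --- exact dimensionality from Theorem~\ref{tmain1}, the $\nu$-a.e.\ constancy of $\dim_H\mu^T_{\ii}$, the observation that hypothesis~(\ref{cmain1c}) forces $\dim_H\mu>1$, and the final substitution into \eqref{eLYformula} --- matches the paper. But the central step is not justified: Lemma~\ref{lmarstrand} does \emph{not} encode Falconer's exceptional-set bound $\min\{1,2-\dim_H\mu\}$. It is the Kaufman-type estimate, and it yields only
\[
\dim_H\mu^T_{\ii}\;\geq\;\min\left\{1,\dim_H\mu,\dim_H\nu_{ss}\right\}-\varepsilon
\]
on a set of directions of positive measure. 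In the situation of Theorem~\ref{tmain1c} one typically has $\dim_H\nu_{ss}<1$ (otherwise one is essentially back in the setting of Theorem~\ref{tmain1b}), in which case this lower bound is $\dim_H\nu_{ss}-\varepsilon$, strictly less than the $1-\varepsilon$ you claim. So the assertion ``there is a set $A$ with $\nu(A)>0$ such that $\dim_H\mu^T_{\ii}\geq 1-\varepsilon$ for all $\ii\in A$'' does not follow from Lemma~\ref{lmarstrand}, and the argument collapses at exactly the point where the new content of Theorem~\ref{tmain1c} is needed.

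What the paper does instead is: use the exact dimensionality of $\mu$ together with Egorov's theorem to produce a set $\Omega$ of large $\mu$-measure on which the $(\dim_H\mu-\varepsilon)$-energy $\int_{\Omega}\int_{\Omega}\|\xv-\yv\|^{-(\dim_H\mu-\varepsilon)}\,d\mu(\xv)\,d\mu(\yv)$ is finite, and then invoke the projection theorem of Peres and Schlag \cite[Proposition~6.1]{PS} for measures of finite energy, which gives
\[
\dim_H\left\{\theta\in\mathbf{P}^1:\dim_H(\proj_{\theta})_*\mu<1\right\}\leq2-\dim_H\mu+\varepsilon.
\]
Choosing $\varepsilon$ with $\dim_H\nu_{ss}>2-\dim_H\mu+\varepsilon$, the measure $\nu_{ss}$ cannot be carried by this exceptional set (by \eqref{elocdimHdim}), so a positive-$\nu$-measure set of directions satisfies $\dim_H\mu^T_{\ii}=1$, and a.e.\ constancy finishes the argument as you describe. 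To repair your write-up you must either prove a Falconer/Peres--Schlag analogue of Lemma~\ref{lmarstrand} (the potential-theoretic estimate there would have to be redone so as to exploit $\dim_H\mu>1$ rather than $\dim_H\lambda\geq\min\{1,\dim_Hm\}$) or cite such a result explicitly; your closing paragraph correctly identifies this transfer from sets to measures as ``the main technical point,'' but then attributes it to a lemma that does not contain it.
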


\begin{proof}
	By Theorem~\ref{tmain1}, the measure $\mu$ is exact dimensional. Thus, by Egorov's Theorem for every $\varepsilon>0$ there exists a set $\Omega\subseteq\Lambda$ such that $\mu(\Omega)>1-\varepsilon$ and
	$$
	\int_{\Omega}\int_{\Omega}\frac{1}{\|\xv-\yv\|^{\dim_H\mu-\varepsilon}}d\mu(\xv)d\mu(\yv)<\infty.
	$$
	Let us fix $\varepsilon>0$ such that $\dim_H\nu_{ss}+\dim_H\mu>2+\varepsilon$
	By applying Peres and Schlag \cite[Proposition~6.1]{PS}, we get
	$$
	\dim_H\left\{\theta\in\mathbf{P}^1:\dim_H(\proj_{\theta})_*\mu<1\right\}\leq2-\dim_H\mu+\varepsilon.
	$$
	Since $\dim_H\nu_{ss}>2-\dim_H\mu+\varepsilon$ we have $\dim_H\mu^T_{\ii}=1$ for $\nu$-a.e. $\ii\in\Sigma^+$. Formula \eqref{elyapHdim2} follows by \eqref{eLYformula}.
\end{proof}

A discussion on possible applications for Theorem~\ref{tmain1c} is given in Theorem~\ref{tmain1capp}.

The statement of Theorem~\ref{tmain1} does not follow directly from the result of Ledrappier and Young \cite[Theorem~C', Corollary~D']{LY2}. The dynamical system~$F$, which is induced naturally by the IFS~$\Phi$, does not act on a Riemannian manifold without boundary. It can be conjugated to a dynamical system acting on a compact Riemannian manifold without boundary, but it would be piecewise smooth and would contain singularities, hence it wouldn't be a diffemorphism.  However, the properties of $F$, which are implied by dominated splitting, allow us to adapt the proofs and methods of \cite{L} and \cite{LY2}. 

The proof of Theorem~\ref{tmain1} is decomposed into four propositions, Proposition~\ref{pLY1}, \ref{pLY2}, \ref{pLY3}, and \ref{pLY4}. In Proposition~\ref{pLY1} we show the exact dimensionality of the components of the affine measure in the strong stable directions and also find their dimension, whilst in Proposition~\ref{pLY2} we do the same for the transversal measures. The proofs of Proposition~\ref{pLY1} and \ref{pLY2} follow the proof of \cite[Proposition~2]{L}. Then we show that the measure $\mu$ has a product structure in dimension, that is, the dimension of $\mu$ is the sum of the dimension of the strong stable components and the dimension of transversal measure. This fact is showed in two parts in Proposition~\ref{pLY3} and Proposition~\ref{pLY4}. The proof of Proposition~\ref{pLY3} is a modified version of \cite[Lemma~11.3.1]{LY2} and Proposition~\ref{pLY4} is a modification of \cite[Section~(10.2)]{LY2}.

\section{Proof of the Ledrappier-Young formula}

Let $\nu$ be the left-shift invariant and ergodic Bernoulli-probability measure on $\Sigma^{+}$ and $\mu$ be the self-affine measure defined in \eqref{edefsameasure}. Let $\widehat{\mu}=\mu\times\nu$ be the $F$-invariant and ergodic probability measure on $\Lambda\times\Sigma^+$, defined in the previous section. Denote by $\mathcal{B}$ the usual Borel $\sigma$-algebra on $\Lambda\times\Sigma^{+}$.

If $\zeta$ is a measurable partition of $\Lambda\times\Sigma^+$ then by the result of Rokhlin \cite{R}, there exists a canonical system of conditional measures, i.e. for $\widehat{\mu}$-a.e. $\y\in\Lambda\times\Sigma$ there exists a measure $\mu^{\zeta}_{\y}$ supported on $\zeta(\y)$, the element of $\zeta$ containing $\y$, such that for every measurable set $A$ the function $\y\mapsto\mu_{\y}^{\zeta}(A)$ is $\mathcal{B}_{\zeta}$-measurable, where $\mathcal{B}_{\zeta}$ is the sub-$\sigma$-algebra of $\mathcal{B}$ whose elements are union of elements of $\zeta$, and
\begin{equation}\label{econdmeasure}
\widehat{\mu}(A)=\int\widehat{\mu}_{\y}^{\zeta}(A)d\widehat{\mu}(\y).
\end{equation}
The conditional measures are uniquely defined up to a set of zero measure.

For two measurable partitions $\zeta_1$ and $\zeta_2$ we define the common refinement $\zeta_1\vee\zeta_2$ such that for every $\y$, $(\zeta_1\vee\zeta_2)(\y)=\zeta_1(\y)\cap\zeta_2(\y)$. Moreover, let us define the image of the partition $\zeta$ in the natural way, i.e. for every $\y$, $(F\zeta)(\y)=F(\zeta(F^{-1}(\y)))$. 

Now, we define a dynamically invariant foliation on $\Lambda\times\Sigma^+$ with respect to the strong stable directions. Denote by $e_{ss}$ the family of one-dimensional strong stable directions defined in Lemma~\ref{ldomsplit1}. Since $e_{ss}$ depends only on $\ii_+$ by Lemma~\ref{ldomsplit3a}, it defines a foliation on $\clo$ for every $\ii_+\in\Sigma^+$. Hence, it defines a foliation $\xi^{ss}$ on $\Lambda\times\Sigma^+$. Namely, for a $\y=(\xv,\ii)\in\Lambda\times\Sigma^+$ let $l_{ss}(\y)$ be the line through $\xv$ parallel to $e_{ss}(\ii)$ on $\R^2\times\left\{\ii\right\}$. Let the partition element $\xi^{ss}(\y)$ be the intersection of the line $l_{ss}(\y)$ with $\Lambda\times\left\{\ii\right\}$. It is easy to see that $F\xi^{ss}$ is a refinement of $\xi^{ss}$, that is, for every $\y$, $(F\xi^{ss})(\y)\subset\xi^{ss}(\y)$.

Let us define the conditional entropy of $F\xi^{ss}$ with respect to $\xi^{ss}$ in the usual way,
\begin{equation*} %\label{econdent}
H(F\xi^{ss}|\xi^{ss}):=-\int\log\cm{\y}{\xi^{ss}}((F\xi^{ss})(\y))d\widehat{\mu}(\y).
\end{equation*}

Observe that if $\mathcal{Q}$ is a countable and measurable partition of $\Lambda\times\Sigma^+$ then 
\begin{equation}\label{econdcond}
	\left(\mu_{\y}^{\xi^{ss}}\right)_{\y}^{\mathcal{Q}}=\frac{\mu_{\y}^{\xi^{ss}}|_{\mathcal{Q}(\y)}}{\mu_{\y}^{\xi^{ss}}(\mathcal{Q}(\y))}=\mu_{\y}^{\xi^{ss}\vee\mathcal{Q}}\text{ for $\mu$-a.e. $\y$.}
\end{equation}
Indeed, 
\begin{multline*}
\int\mu_{\y}^{\xi^{ss}}d\mu(\y)=\int\sum_{Q\in\mathcal{Q}}\frac{\mu_{\y}^{\xi^{ss}}|_{Q}}{\mu_{\y}^{\xi^{ss}}(Q)}\mu_{\y}^{\xi^{ss}}(Q)d\mu(\y)=\iint\left(\mu_{\y}^{\xi^{ss}}\right)_{\z}^{\mathcal{Q}}d\mu_{\y}^{\xi^{ss}}(\z)d\mu(\y)=\\
\iint\left(\mu_{\z}^{\xi^{ss}}\right)_{\z}^{\mathcal{Q}}d\mu_{\y}^{\xi^{ss}}(\z)d\mu(\y)=\int\left(\mu_{\z}^{\xi^{ss}}\right)_{\z}^{\mathcal{Q}}d\mu(\z),
\end{multline*}
where we used that for $\widehat{\mu}$-a.e. $\y$, if $\z\in\xi^{ss}(\y)$ then $\mu^{\xi^{ss}}_{\y}=\mu^{\xi^{ss}}_{\z}$. Since for $\mu$-a.e. $\y$ the measure $\left(\mu_{\y}^{\xi^{ss}}\right)_{\y}^{\mathcal{Q}}$ is supported on $(\xi^{ss}\vee\mathcal{Q})(\y)$, by uniqueness of conditional measures we get \eqref{econdcond}.

\begin{prop}\label{pLY1}
For $\widehat{\mu}$-a.e. $\y\in\Lambda\times\Sigma^+$ the measure $\widehat{\mu}_{\y}^{\xi^{ss}}$ is exact dimensional and $$\dim_H\cm{\y}{\xi^{ss}}=\frac{H(F\xi^{ss}|\xi^{ss})}{\ly{ss}}.$$
\end{prop}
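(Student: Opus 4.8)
The plan is to extract both conclusions from the dynamics along the decreasing family of partitions $\{F^n\xi^{ss}\}_{n\ge 0}$, pairing a Shannon--McMillan--Breiman type rate for the conditional masses of the atoms with an Oseledec type rate for their geometric size. Two preliminary observations set the stage. First, since every $f_i$ contracts with ratio at most $\rho:=\max_i\|A_i\|<1$ and $(F^n\xi^{ss})(\y)=F^n(\xi^{ss}(F^{-n}\y))$ is the image of a subset of one fibre under an affine map whose linear part is an $n$-fold product of matrices from $\Alpha$, one has $\diam(F^n\xi^{ss})(\y)\le\rho^n\diam\Lambda\to 0$; hence $\bigvee_{n\ge 0}F^n\xi^{ss}$ is the partition into points of each $\xi^{ss}$-leaf, and the conditional measure $\cm{\y}{\xi^{ss}}$ is determined by its masses on the atoms $(F^n\xi^{ss})(\y)$. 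Second, using the strong separation condition (under which $F$ is invertible on $\Lambda\times\Sigma^+$) together with the fact that each $f_i$ carries $e_{ss}$-leaves to $e_{ss}$-leaves (Lemma~\ref{ldomsplit1}(1)), one checks that $F\xi^{ss}=\xi^{ss}\vee\mathcal C$, where $\mathcal C=\{\{(\xv,\ii)\in\Lambda\times\Sigma^+:\xv\in f_i(\Lambda)\}:1\le i\le N\}$ is the finite partition by the first address. In particular $(F\xi^{ss})(\y)=\mathcal C(\y)\cap\xi^{ss}(\y)$, so $H(F\xi^{ss}|\xi^{ss})=-\int\log\cm{\y}{\xi^{ss}}(\mathcal C(\y))\,d\widehat{\mu}(\y)$, which by concavity of $\log$ is at most $\log N<\infty$.

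For the measure side, iterating the second observation gives $F^n\xi^{ss}=\xi^{ss}\vee\bigvee_{k=0}^{n-1}F^k\mathcal C$; telescoping the product $\cm{\y}{\xi^{ss}}((F^n\xi^{ss})(\y))=\prod_{k=0}^{n-1}\cm{\y}{F^k\xi^{ss}}((F^{k+1}\xi^{ss})(\y))$ and invoking the equivariance $\cm{\y}{F^k\xi^{ss}}=F^k_{*}\cm{F^{-k}\y}{\xi^{ss}}$ of Rokhlin conditional measures (valid since $F$ preserves $\widehat{\mu}$) turns this into
\begin{equation*}
-\log\cm{\y}{\xi^{ss}}\big((F^n\xi^{ss})(\y)\big)=\sum_{k=0}^{n-1}\varphi(F^{-k}\y),\qquad \varphi(\z):=-\log\cm{\z}{\xi^{ss}}(\mathcal C(\z)),
\end{equation*}
with $\varphi\ge 0$, $\varphi\in L^1(\widehat{\mu})$ and $\int\varphi\,d\widehat{\mu}=H(F\xi^{ss}|\xi^{ss})$. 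Since $F$, hence $F^{-1}$, is $\widehat{\mu}$-ergodic, Birkhoff's theorem yields $-\frac{1}{n}\log\cm{\y}{\xi^{ss}}((F^n\xi^{ss})(\y))\to H(F\xi^{ss}|\xi^{ss})$ for $\widehat{\mu}$-a.e.\ $\y$. For the geometric side, the affine map realising $F^n$ on the fibre over $F^{-n}\y$ acts on the line $e_{ss}(F^{-n}\y)$ through $\xv$ as multiplication by a scalar of modulus $c_n(\y)=\prod_{j=1}^n\|A(\sigma^{-j}\ii)|e_{ss}(\sigma^{-j}\ii)\|$, where $\ii$ denotes a two-sided coding of $\y$; applying Birkhoff to $\sigma^{-1}$ and \eqref{elyapint} of Lemma~\ref{ldomsplit2}, $\frac{1}{n}\log c_n(\y)\to-\ly{ss}$ for $\widehat{\mu}$-a.e.\ $\y$, and $\diam(F^n\xi^{ss})(\y)=c_n(\y)\,\diam\xi^{ss}(F^{-n}\y)\le c_n(\y)\,\diam\Lambda$.

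Finally I would combine the two rates into a local dimension for $\cm{\y}{\xi^{ss}}$. Fix $\varepsilon>0$. For the lower bound on the mass of balls: taking $n$ minimal with $c_n(\y)\diam\Lambda\le\rho$ (so $n\le\frac{-\log\rho}{\ly{ss}-\varepsilon}+O(1)$ once $\rho$ is small), the atom $(F^n\xi^{ss})(\y)$, which contains $\y$, lies in $B_{2\rho}(\xv)$, so by the entropy rate $\cm{\y}{\xi^{ss}}(B_{2\rho}(\xv))\ge\rho^{\,H(F\xi^{ss}|\xi^{ss})/(\ly{ss}-\varepsilon)+o(1)}$, whence $\overline{d}_{\cm{\y}{\xi^{ss}}}(\xv)\le H(F\xi^{ss}|\xi^{ss})/\ly{ss}$. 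The reverse bound, $\underline{d}_{\cm{\y}{\xi^{ss}}}(\xv)\ge H(F\xi^{ss}|\xi^{ss})/\ly{ss}$, is where the strong separation condition genuinely enters: the compact sets $f_i(\clo)$ being pairwise at distance $\ge\eta'>0$, the first coordinate $\xv_{k-1}$ of $F^{-(k-1)}\y$ lies at distance $\ge\eta'$ from $\Lambda$ minus its own first-level cylinder; pushing forward by $F^{k-1}$, which scales this leaf by $c_{k-1}(\y)\ge e^{-(k-1)(\ly{ss}+\varepsilon)}$ (for $k$ large, $\widehat{\mu}$-a.e.), shows $(F^{k-1}\xi^{ss})(\y)\setminus(F^k\xi^{ss})(\y)$ lies at distance $\ge\eta' e^{-(k-1)(\ly{ss}+\varepsilon)}$ from $\xv$. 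Hence for $\rho$ small, $B_\rho(\xv)\cap\xi^{ss}(\y)\subseteq(F^n\xi^{ss})(\y)$ for every $n$ up to $\frac{-\log\rho}{\ly{ss}+\varepsilon}-O(1)$, so $\cm{\y}{\xi^{ss}}(B_\rho(\xv))\le\rho^{\,H(F\xi^{ss}|\xi^{ss})/(\ly{ss}+\varepsilon)+o(1)}$, and letting $\varepsilon\to 0$ gives the claim. Carrying these estimates out at $\cm{\y}{\xi^{ss}}$-a.e.\ point of the leaf in place of $\y$ (legitimate because $\cm{\z}{\xi^{ss}}=\cm{\y}{\xi^{ss}}$ for a.e.\ $\z$ in the leaf and all the above limits hold $\widehat{\mu}$-a.e.) shows $\cm{\y}{\xi^{ss}}$ is exact dimensional with local, hence Hausdorff (by \eqref{elocdimHdim}), dimension $H(F\xi^{ss}|\xi^{ss})/\ly{ss}$. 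I expect this last ball-versus-atom comparison — the one step that really uses the strong separation condition, together with the absence of distortion of the affine maps along $e_{ss}$ — to be the main obstacle; it follows the scheme of Ledrappier~\cite[Proposition~2]{L}.
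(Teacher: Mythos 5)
Your proposal is correct and follows essentially the same route as the paper: the identity $F\xi^{ss}=\mathcal{P}\vee\xi^{ss}$, the telescoping of conditional masses of the atoms $\bigl(\bigvee_{k=0}^{n-1}F^k\mathcal{P}\vee\xi^{ss}\bigr)(\y)$ into a Birkhoff sum with integral $H(F\xi^{ss}|\xi^{ss})$, Oseledec for the contraction rate along $e_{ss}$, and the SSC-based two-sided comparison of atoms with leafwise balls (your ball-versus-atom step is exactly the content of the paper's Lemma~\ref{ltoLY1}, proved there by the same $\kappa=\min_{i\neq j}\mathrm{dist}(f_i(\Lambda),f_j(\Lambda))>0$ argument). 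The only cosmetic difference is that the paper sandwiches the atoms between balls of radii comparable to $\alpha_2(A_{i_{-1}}\cdots A_{i_{-n}})$ in a single lemma rather than iterating the separation estimate level by level.
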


Before we prove the proposition, we define another partition $\mathcal{P}=\left\{f_i(\Lambda)\times\Sigma^{+}\right\}_{i=1}^N$. It is easy to see that
\begin{equation}\label{erefin}
\mathcal{P}\vee\xi^{ss}=F\xi^{ss}.
\end{equation}
Let us denote the ball with radius $r$ centered at $\y$ by $B_r(\y)$. Let $B^{ss}_r(\y)$ be the restriction of the ball to $\xi^{ss}(\y)$. That is, $$B^{ss}_r(\y)=\left\{\underline{\mathbf{z}}\in\xi^{ss}(\y):|\y-\underline{\mathbf{z}}|\leq r\right\},$$
where $|.|$ denotes the usual Euclidean norm on $\R^2$.

\begin{lemma}\label{ltoLY1}
There is a constant $c_1>0$ that for every $n\geq1$ and $\y=(\xv,\ii)\in\Lambda\times\Sigma^{+}$ with $\xv=\pi_{-}(\dots,i_{-2},i_{-1})$
\[
\left(\Lambda\times\left\{\ii\right\}\right)\cap B^{ss}_{c_1^{-1}\alpha_2(A_{i_{-1}}\cdots A_{i_{-n}})}(\y)\subseteq\left(\bigvee_{k=0}^{n-1}F^k\mathcal{P}\vee\xi^{ss}\right)(\y)\subseteq B^{ss}_{c_1\alpha_2(A_{i_{-1}}\cdots A_{i_{-n}})}(\y),
\]
where $\alpha_2(.)$ is the second singular value of a matrix.
\end{lemma}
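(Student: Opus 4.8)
The plan is to track how the cylinder sets $\bigvee_{k=0}^{n-1}F^k\mathcal{P}$ restrict, along the strong stable leaf $\xi^{ss}(\y)$, to an interval whose length is comparable to $\alpha_2(A_{i_{-1}}\cdots A_{i_{-n}})$. First I would observe that, by \eqref{erefin} iterated, $\bigvee_{k=0}^{n-1}F^k\mathcal{P}\vee\xi^{ss}=F^n\xi^{ss}\vee\text{(coarser pieces)}$, but more concretely the element $\left(\bigvee_{k=0}^{n-1}F^k\mathcal{P}\vee\xi^{ss}\right)(\y)$ equals $\left(f_{i_{-1}}\circ\cdots\circ f_{i_{-n}}(\Lambda)\times\{\ii\}\right)\cap l_{ss}(\y)$, because the $F^k\mathcal{P}$-coordinate of $\y=(\pi_-(\ldots,i_{-2},i_{-1}),\ii)$ through level $n$ records exactly the symbols $i_{-1},\dots,i_{-n}$ reading backwards. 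So the set in question is the intersection of the affine image $f_{\iif}(\Lambda)$, $\iif=(i_{-1},\dots,i_{-n})$, with a line parallel to $e_{ss}(\ii)$ passing through the point $\pi_-(\ldots,i_{-n};i_{-n+1},\dots,i_{-1})\in f_{\iif}(\Lambda)$.

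Next I would estimate the diameter of this intersection. The map $f_{\iif}$ has linear part $A_{\iif}=A_{i_{-1}}\cdots A_{i_{-n}}$, and since $\Lambda$ is bounded, $\mathrm{diam}\, f_{\iif}(\Lambda)\le \|A_{\iif}\|\cdot\mathrm{diam}\,\Lambda=\alpha_1(A_{\iif})\,\mathrm{diam}\,\Lambda$; that alone gives only the $\alpha_1$ bound, not $\alpha_2$. The point is that we are not measuring the full diameter of $f_{\iif}(\Lambda)$ but its extent in the direction $e_{ss}(\ii)$. Here I would invoke the dominated splitting geometry: by Lemma~\ref{ldomsplit1}(1), $A_{\iif}$ maps $e_{ss}(\sigma^{-n}\ii\text{-type shift})$ to $e_{ss}(\ii)$, so the line $l_{ss}(\y)$ is the $f_{\iif}$-image of a line in the direction $e_{ss}$ of the "interior" coding; thus $f_{\iif}^{-1}(l_{ss}(\y)\cap f_{\iif}(\Lambda))$ is a subset of $\Lambda$ lying on a strong-stable line, which has diameter at most $\mathrm{diam}\,\Lambda$, and applying $A_{\iif}$ restricted to the direction $e_{ss}$ contracts lengths by $\|A_{\iif}|e_{ss}\|\le C\alpha_2(A_{\iif})$ by Lemma~\ref{ldomsplit2}. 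This yields the upper inclusion with a constant $c_1$ absorbing $C\cdot\mathrm{diam}\,\Lambda$.

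For the lower inclusion I would argue in the reverse direction: I need that the ball $B^{ss}_{c_1^{-1}\alpha_2(A_{\iif})}(\y)$ restricted to $\Lambda\times\{\ii\}$ stays inside $f_{\iif}(\Lambda)$, equivalently that its $f_{\iif}^{-1}$-preimage, which is a strong-stable subinterval of $\Lambda$ of length at least $c_1^{-1}\alpha_2(A_{\iif})/\|A_{\iif}^{-1}|e_{ss}\|^{-1}$, covers the relevant portion of $\Lambda$. Using $\|A_{\iif}^{-1}|e_{ss}\|^{-1}\ge C^{-1}\alpha_2(A_{\iif})$ from Lemma~\ref{ldomsplit2}, the preimage has length at least $c_1^{-1}C^{-1}$; choosing $c_1\ge C\cdot\mathrm{diam}\,\Lambda$ makes this at least $\mathrm{diam}\,\Lambda$, hence it contains all of $\Lambda$ on that strong-stable line, and in particular contains the whole element $\left(\bigvee_{k=0}^{n-1}F^k\mathcal{P}\vee\xi^{ss}\right)(\y)$. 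The only subtlety throughout — and the step I expect to be the main obstacle — is making the comparison between $\|A_{\iif}|e_{ss}\|$ and the actual \emph{extent of $f_{\iif}(\Lambda)$ along the line $l_{ss}(\y)$} fully rigorous: one must be careful that $l_{ss}(\y)$ is genuinely (a subset of) the $f_{\iif}$-image of a strong-stable line of $\Lambda$, which requires matching the base point $\xv=\pi_-(\ldots,i_{-1})$ with a point of $\Lambda$ under $f_{\iif}$ and using the invariance $A_{\iif}e_{ss}(\cdot)=e_{ss}(\ii)$ from Lemma~\ref{ldomsplit1}; once this identification is set up correctly the length estimates are routine applications of Lemma~\ref{ldomsplit2} and boundedness of $\Lambda$.
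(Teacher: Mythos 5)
Your identification of the partition element as $\bigl(f_{\iif}(\Lambda)\times\{\ii\}\bigr)\cap l_{ss}(\y)$ with $\iif=(i_{-1},\dots,i_{-n})$ is correct, and your upper inclusion is essentially the paper's argument: the element is the $f_{\iif}$-image of a subset of $\Lambda$ lying on a strong-stable line of the pulled-back coding $\ii'=(i_{-n},\dots,i_{-1},i_0,\dots)$, so its diameter is at most $\diam(\clo)\,\|A_{\iif}|e_{ss}(\ii')\|\leq C\,\diam(\clo)\,\alpha_2(A_{\iif})$ by Lemma~\ref{ldomsplit1} and Lemma~\ref{ldomsplit2}.

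The lower inclusion is where the real content lies, and your argument for it establishes the wrong containment. You show that $f_{\iif}^{-1}\bigl(B^{ss}_{c_1^{-1}\alpha_2(A_{\iif})}(\y)\bigr)$ is an interval long enough to contain all of $\Lambda$ on the relevant strong-stable line and conclude that it ``contains the whole element''; after applying $f_{\iif}$ that is precisely the statement ``partition element $\subseteq$ ball'', i.e.\ another upper inclusion. What is actually required is the converse: every point $\underline{w}\in\Lambda$ on $l_{ss}(\y)$ with $|\underline{w}-\xv|\leq c_1^{-1}\alpha_2(A_{\iif})$ must already lie in $f_{\iif}(\Lambda)$. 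Such a $\underline{w}$ could a priori belong to a different cylinder $f_{\underline{j}}(\Lambda)$, $\underline{j}\neq\iif$, and nothing in your argument excludes this --- indeed you never invoke the separation constant $\kappa=\min_{i\neq j}\mathrm{dist}(f_i(\Lambda),f_j(\Lambda))>0$ furnished by the SSC, which is the indispensable ingredient. The paper's proof excludes it by pulling back: if the codings of $\underline{w}$ and $\xv$ first disagree at level $k\leq n$, then $f_{i_{-k+1}}^{-1}\circ\cdots\circ f_{i_{-1}}^{-1}$ sends the two points into distinct first-level cylinders, hence to distance at least $\kappa$; since this pullback expands distances along $l_{ss}(\y)$ by the factor $\|A_{i_{-1}}\cdots A_{i_{-k+1}}|e_{ss}\|^{-1}\leq\|A_{i_{-1}}\cdots A_{i_{-n}}|e_{ss}\|^{-1}\leq C\,\alpha_2(A_{\iif})^{-1}$ (multiplicativity of $\|\cdot|e_{ss}\|$ plus Lemma~\ref{ldomsplit2}), choosing $c_1>C/\kappa$ gives $|\underline{w}-\xv|$ pulled back below $\kappa$, a contradiction; equivalently, $F^n\bigl(B^{ss}_{\kappa/2}(F^{-n}(\y))\bigr)=B^{ss}_{\frac{\kappa}{2}\|A^{(n)}(\ii')|e_{ss}(\ii')\|}(\y)$ sits inside the partition element. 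A smaller slip in the same passage: for your preimage interval to be long you would need $c_1$ \emph{small} (of order $(C\diam\Lambda)^{-1}$), not $c_1\geq C\diam\Lambda$ as written, a further sign that the inequalities are being run in the wrong direction.
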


\begin{proof}
Let us fix a $n\geq1$ and $\y\in\Lambda\times\Sigma^{+}$ and let $F^{-n}(\y)=(\xv',\ii')$ then $\ii'=(i_{-n},\dots,i_{-1},i_{0},\dots)$. Denote by $D=\mathrm{diam}(\clo)$ the diameter of $\clo$. By the definition of strong stable directions, see Lemma~\ref{ldomsplit1}, we have
$$\mathrm{diam}\left(\left(\bigvee_{k=0}^{n-1}F^k\mathcal{P}\vee\xi^{ss}\right)(\y)\right)\leq D\|A^{(n)}(\ii')|e_{ss}(\ii')\|.$$

On the other hand, let $\kappa=\min_{i\neq j}\mathrm{dist}(f_i(\Lambda),f_j(\Lambda))$. Since the IFS $\Phi$ satisfies the strong separation condition, see Definition~\ref{dSSC}, $\kappa>0$. Then for every $F^{-n}(\y)=(\xv',\ii')\in\Lambda\times\Sigma^+$ if $\xv'\in f_i(\Lambda)$ then $\mathrm{dist}(\xv',f_j(\Lambda))>\kappa/2$ for every $j\neq i$. So
\[
\Lambda\times\left\{\ii\right\}\cap F^{n}(B^{ss}_{\frac{\kappa}{2}}(F^{-n}(\y)))\subseteq\left(\bigvee_{k=0}^{n-1}F^k\mathcal{P}\vee\xi^{ss}\right)(\y).
\]
Applying again Lemma~\ref{ldomsplit1}, we get $F^{n}(B^{ss}_{\frac{\kappa}{2}}(F^{-n}(\y)))=B^{ss}_{\frac{\kappa}{2}\|A^{(n)}(\ii')|e_{ss}(\ii')\|}(\y)$.

Let $C>0$ be the constant defined in Lemma~\ref{ldomsplit2}, then by choosing $c_{1}:=C\max\left\{D,(\frac{\kappa}{2})^{-1}\right\}$, the statement of the lemma follows.
\end{proof}

\begin{proof}[Proof of Proposition~\ref{pLY1}]
To prove the statement of the proposition it is enough to show that
\begin{equation*}%\label{elocdimLY1}
\lim_{r\rightarrow0+}\frac{\log\cm{\y}{\xi^{ss}}(B^{ss}_r(\y))}{\log r}=\frac{H(F\xi^{ss}|\xi^{ss})}{\ly{ss}}\text{ for $\widehat{\mu}$-a.e $\y$.}
\end{equation*}

By Lemma~\ref{ltoLY1}, it is equivalent to show that
\begin{equation}\label{eentlyapLY1}
\lim_{n\rightarrow\infty}\frac{\log\cm{\y}{\xi^{ss}}\left(\left(\bigvee_{k=0}^{n-1}F^k\mathcal{P}\vee\xi^{ss}\right)(\y)\right)}{\log \alpha_2(A_{i_{-1}}\cdots A_{i_{-n}})}=\frac{H(F\xi^{ss}|\xi^{ss})}{\ly{ss}}\text{ for $\widehat{\mu}$-a.e $\y$.}
\end{equation}

We have
\begin{multline*}
\log\cm{\y}{\xi^{ss}}\left(\left(\bigvee_{k=0}^{n-1}F^k\mathcal{P}\vee\xi^{ss}\right)(\y)\right)=\log\cm{\y}{\xi^{ss}}\left(\mathcal{P}(\y)\cap\cdots\cap F^{n-1}(\mathcal{P}(F^{-n+1}(\y)))\right)=\\
\log\cm{\y}{\xi^{ss}}(\mathcal{P}(\y))\prod_{k=1}^{n-1}\frac{\cm{\y}{\xi^{ss}}\left(\mathcal{P}(\y)\cap\cdots\cap F^{k}(\mathcal{P}(F^{-k}(\y)))\right)}{\cm{\y}{\xi^{ss}}\left(\mathcal{P}(\y)\cap\cdots\cap F^{k-1}(\mathcal{P}(F^{-k+1}(\y)))\right)}.
\end{multline*}
By using \eqref{econdcond}, we get 
$$
\frac{\cm{\y}{\xi^{ss}}\left(\mathcal{P}(\y)\cap\cdots\cap F^{k}(\mathcal{P}(F^{-k}(\y)))\right)}{\cm{\y}{\xi^{ss}}\left(\mathcal{P}(\y)\cap\cdots\cap F^{k-1}(\mathcal{P}(F^{-k+1}(\y)))\right)}=\cm{\y}{\xi^{ss}\vee\mathcal{P}\vee\cdots\vee F^{k-1}\mathcal{P}}\left(F^{k}(\mathcal{P}(F^{-k}(\y)))\right).
$$
On the other hand, by applying \eqref{erefin}, $\xi^{ss}\vee\mathcal{P}\vee\cdots\vee F^{k-1}\mathcal{P}=F^{k}\xi^{ss}$. Moreover, by the invariance of the measure $\widehat{\mu}$
$$
\cm{\y}{F^{k}\xi^{ss}}\left(F^{k}(\mathcal{P}(F^{-k}(\y)))\right)=\cm{F^{-k}(\y)}{\xi^{ss}}\left(\mathcal{P}(F^{-k}(\y)))\right),
$$
Hence
\[
\frac{1}{n}\log\cm{\y}{\xi^{ss}}\left(\left(\bigvee_{k=0}^{n-1}F^k\mathcal{P}\vee\xi^{ss}\right)(\y)\right)=\frac{1}{n}\sum_{k=0}^{n-1}\log\cm{F^{-k}(\y)}{\xi^{ss}}\left(\mathcal{P}(F^{-k}(\y)))\right)
\]
Since $\widehat{\mu}$ is ergodic
\begin{equation}\label{eentLY1}
\lim_{n\rightarrow\infty}\frac{1}{n}\log\cm{\y}{\xi^{ss}}\left(\left(\bigvee_{k=0}^{n-1}F^k\mathcal{P}\vee\xi^{ss}\right)(\y)\right)=\int\log\cm{\y}{\xi^{ss}}(\mathcal{P}(\y))d\widehat{\mu}(\y)=-H(\mathcal{P}|\xi^{ss}).
\end{equation}
Using the property of conditional entropy and \eqref{erefin}, \begin{equation}\label{econdenteq}
H(\mathcal{P}|\xi^{ss})=H(\mathcal{P}\vee\xi^{ss}|\xi^{ss})=H(F\xi^{ss}|\xi^{ss}).
\end{equation} Applying Oseledec's Theorem, we have
$$\lim_{n\to\infty}\frac{1}{n}\log\alpha_2(A_{i_{-1}}\cdots A_{i_{-n}})=-\ly{ss}\text{ for $\nu$-a.e $\ii$,}$$
which together with \eqref{eentLY1} and \eqref{econdenteq} implies \eqref{eentlyapLY1}.
\end{proof}

The next proposition is devoted to proving that the transversal measures $\mu^T_{\ii}=\mu\circ(\pr{\ii})^{-1}$ are exact dimensional measures for $\nu$-a.e $\ii\in\Sigma^+$, and to calculating the typical Hausdorff dimension, where $\pr{\ii}$ is the orthogonal projection from $\R^2$ to the subspace perpendicular to $e_{ss}(\ii)$.

\begin{prop}\label{pLY2}
For $\nu$-a.e. $\ii\in\Sigma^+$ the measure $\mu^T_{\ii}$ is exact dimensional and $$\dim_H\mu^T_{\ii}=\frac{h_{\nu}-H(F\xi^{ss}|\xi^{ss})}{\ly{s}}.$$
\end{prop}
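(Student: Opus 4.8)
The plan is to run, transverse to the foliation $\xi^{ss}$, the scheme used for Proposition~\ref{pLY1}, with the fast singular value $\alpha_2$ replaced by the slow one $\alpha_1$ and the conditional entropy $H(F\xi^{ss}|\xi^{ss})$ by its complement $h_\nu-H(F\xi^{ss}|\xi^{ss})$. First I pass to the natural extension: since $e_{ss}(\ii)$, and hence $\pr{\ii}$ and $\mu^T_\ii$, depends only on $\ii_+$ (Lemma~\ref{ldomsplit3b}), and since $\widehat\nu$ is a product over $\Sigma^-\times\Sigma^+$ whose $\Sigma^-$–marginal pushes forward under $\pi_-$ to $\mu$, a Fubini argument together with \eqref{elocdimHdim} reduces the assertion to the following: for $\widehat\nu$-a.e.\ two-sided $\ii$, writing $\xv=\pi_-(\ii_-)$,
\[
\lim_{r\to 0+}\frac{\log\mu^T_{\ii_+}\!\big(B_r(\pr{\ii_+}\xv)\big)}{\log r}=\frac{h_\nu-H(F\xi^{ss}|\xi^{ss})}{\ly{s}} .
\]
By Lemma~\ref{ldomsplit2}, $\tfrac1n\log\alpha_1(A_{i_{-1}}\cdots A_{i_{-n}})\to-\ly{s}$, so it suffices to evaluate the ratio along the scales $r_n:=\alpha_1(A_{i_{-1}}\cdots A_{i_{-n}})$.

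The geometric input, playing the role of Lemma~\ref{ltoLY1}, is that by dominated splitting (Lemmas~\ref{ldomsplit1}--\ref{ldomsplit3b}) every level-$n$ cylinder $f_{\underline k}(\Lambda)$ is comparable to a rectangle with side lengths $\asymp\alpha_1(A_{\underline k})$ and $\asymp\alpha_2(A_{\underline k})$ whose long side points in a direction at angle uniformly bounded away from $e_{ss}(\ii_+)$, so that $\pr{\ii_+}(f_{\underline k}(\Lambda))$ is an interval of length $\asymp\alpha_1(A_{\underline k})$. Combining this with the $\kappa$-separation of first-level pieces furnished by the strong separation condition (as in Lemma~\ref{ltoLY1}) yields a constant $c_2>0$ such that, writing $U_n$ for the union of those level-$n$ cylinders meeting the strip $\pr{\ii_+}^{-1}B_{r_n}(\pr{\ii_+}\xv)$,
\[
\pr{\ii_+}^{-1}\!\big(B_{c_2^{-1}r_n}(\pr{\ii_+}\xv)\big)\cap\Lambda\ \subseteq\ U_n\ \subseteq\ \pr{\ii_+}^{-1}\!\big(B_{c_2 r_n}(\pr{\ii_+}\xv)\big)\cap\Lambda .
\]
Hence $\mu^T_{\ii_+}(B_{r_n}(\pr{\ii_+}\xv))$ has, up to the fixed factor $c_2^{\pm1}$ in the radius, the same exponential rate as $\mu(U_n)$, i.e.\ as $\mu^T_{\ii_+}\big(\pr{\ii_+}(f_{i_{-1}}\cdots f_{i_{-n}}(\Lambda))\big)$ — the $\mu$-mass of the $\xi^{ss}$–tube of width $\asymp r_n$ about the strong–stable line through $\xv$.

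To compute that rate, observe that $A_{i_0}$ maps the line $e_{ss}(\ii)$ onto $e_{ss}(\sigma\ii)$ (Lemma~\ref{ldomsplit1}(1)), so each $f_j$ descends to an affine bijection $g_j$ of the transversal lines whose linear part has norm $\asymp\alpha_1(A_j)$, the $n$-fold composition descending from $f_{\underline k}$ contracting the transversal line by $\asymp\alpha_1(A_{\underline k})$ \emph{uniformly} in $n$ (Lemma~\ref{ldomsplit2}, via uniform transversality of the strong-stable directions), and \eqref{edefsameasure} descends to
\[
\mu^T_{\ii_+}=\sum_{j=1}^N p_j\,(g_j)_*\mu^T_{j\ii_+},
\]
where $j\ii_+$ denotes $\ii_+$ with $j$ prepended. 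Iterating this identity $n$ times and telescoping exactly as in the proof of Proposition~\ref{pLY1} — using \eqref{econdcond}, \eqref{erefin}, the $F$-invariance of $\widehat\mu$ and Birkhoff's ergodic theorem — one obtains
\[
\lim_{n\to\infty}\tfrac1n\log\mu^T_{\ii_+}\!\big(B_{r_n}(\pr{\ii_+}\xv)\big)=-\big(H(\mathcal P)-H(\mathcal P|\xi^{ss})\big)=-\big(h_\nu-H(F\xi^{ss}|\xi^{ss})\big),
\]
since $H(\mathcal P)=-\sum_i p_i\log p_i=h_\nu$ and $H(\mathcal P|\xi^{ss})=H(F\xi^{ss}|\xi^{ss})$ by \eqref{econdenteq}. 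Dividing by $\log r_n\sim-n\ly{s}$ gives the displayed local dimension; as the limit is a constant independent of $\y$, the measure $\mu^T_{\ii_+}$ is exact dimensional for $\nu$-a.e.\ $\ii_+$ with the stated value, by \eqref{elocdimHdim}.

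The main obstacle is the geometric lemma and the rigorous form of the telescoping. Unlike in Lemma~\ref{ltoLY1}, orthogonal projection does not preserve the strong separation condition: the transversal tube of width $\asymp\alpha_1(A_{i_{-1}}\cdots A_{i_{-n}})$ picks up many level-$n$ cylinders besides the one through $\xv$, precisely those ``stacked along $e_{ss}(\ii_+)$'', and one must show — from the uniform transversality of $e_{ss}$ to the long axes of cylinders (dominated splitting) and the $\kappa$-separation (SSC) — that up to bounded multiplicity near the tube boundary this family is exactly the $\xi^{ss}$-saturation of the cylinder of $\y$, so that counting its $\mu$-mass is governed by $H(\mathcal P|\xi^{ss})$ rather than by $H(\mathcal P)$. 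This is the mechanism subtracting the stable entropy $H(F\xi^{ss}|\xi^{ss})$ in the formula, and it is also why the telescoping is not literally the Birkhoff average of a fixed cocycle but must absorb the error contributions of these stacked cylinders, following \cite[Proposition~2]{L}.
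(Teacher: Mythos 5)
Your overall strategy is the right one and matches the paper's in outline: work at the scales $r_n=\alpha_1(A_{i_{-1}}\cdots A_{i_{-n}})$, identify $\mu^T_{\ii_+}(B_{r_n})$ with the $\mu$-mass of a transversal tube of width comparable to $r_n$ about the strong-stable line through $\xv$, and telescope that mass back through the dynamics so that the entropy that survives is $h_\nu-H(\mathcal P|\xi^{ss})$. You also correctly locate the difficulty: the tube contains the whole $\xi^{ss}$-stack of level-$n$ cylinders, which is why $H(\mathcal P|\xi^{ss})$ gets subtracted rather than $0$. But the proposal stops exactly where the proof has to be done. The phrase ``telescoping exactly as in the proof of Proposition~\ref{pLY1} \dots and Birkhoff's ergodic theorem'' is the gap: in Proposition~\ref{pLY1} the telescoping ratios are values of the \emph{fixed} function $\y\mapsto\widehat\mu_{\y}^{\xi^{ss}}(\mathcal P(\y))$ along the backward orbit, so Birkhoff applies directly; here the $k$-th ratio is
\[
\frac{\mu\bigl(B^T_{\delta_{n,k}}(F^{-k+1}(\y))\cap\mathcal P(F^{-k+1}(\y))\bigr)}{\mu\bigl(B^T_{\delta_{n,k}}(F^{-k+1}(\y))\bigr)}
\]
at a radius $\delta_{n,k}=\|A_{i_{-k}}\cdots A_{i_{-n}}|e_s(F^{-n}(\y))\|$ depending on both $n$ and $k$, i.e.\ a \emph{sequence} of functions $g_{\delta}$ evaluated along the orbit. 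To pass to the limit one needs: (i) the exact scaling identity $\mu(B^T_{\delta}(\y)\cap\mathcal P(\y))=\mu\bigl(B^T_{\|A_{i_{-1}}|e_s(F^{-1}(\y))\|^{-1}\delta}(F^{-1}(\y))\bigr)p_{i_{-1}}$ (the paper's Lemma~\ref{ltoLY2}), which is what makes the telescoping exact rather than ``up to error contributions of stacked cylinders''; (ii) an integrable majorant $\sup_{\delta>0}\{-\log g_\delta\}\in L^1(\widehat\mu)$, which the paper obtains from a Besicovitch covering argument (Lemma~\ref{ltoLY2b}) and which is genuinely needed since $-\log g_\delta$ is unbounded; and (iii) Maker's ergodic theorem for sequences of functions (Lemma~\ref{lmaker}) in place of Birkhoff. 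None of these appear in your plan, and without them the claimed limit $-(h_\nu-H(F\xi^{ss}|\xi^{ss}))$ is not established; your own closing paragraph concedes as much.

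A secondary, related imprecision: iterating your identity $\mu^T_{\ii_+}=\sum_j p_j (g_j)_*\mu^T_{j\ii_+}$ expresses $\mu^T_{\ii_+}(B_{r_n})$ as a sum over \emph{all} length-$n$ words, most of which contract the transversal line by amounts incomparable to $r_n$, so the ball mass cannot be read off cylinder by cylinder from this relation; this is precisely why the paper abandons the self-affinity relation for the projected measures and works instead with the conditional-ratio functions $g_\delta$ and the modified transversal balls $B^T_\delta$ measured along the $F$-invariant stable directions $e_s$.
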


We define another invariant foliation $\xi^s$ with respect to the stable plane. That is, for every $\y=(\xv,\ii)\in\Lambda\times\Sigma^+$, $\xi^s(\y)=\Lambda\times\left\{\ii\right\}$. Then the foliation $\xi^s$ has similar properties to $\xi^{ss}$, i.e.  $F\xi^s$ is a refinement of $\xi^s$ and $\mathcal{P}\vee\xi^s=F\xi^s$. Moreover, it is easy to see that for every $\y$
\begin{equation}\label{establecondmeasure}
\cm{\y}{\xi^s}=\mu
\end{equation}

For the examination of the local dimension of the projected measure, instead of looking at the balls on the projection we introduce the transversal stable balls associated to the projection. Let $B^t_r(\xv,\ii)$ be transversal stable ball with radius $r$, i.e $$B^t_r(\xv,\ii)=\left\{(\yv,\jj):\ii=\jj\text{ and }\mathrm{dist}(l_{ss}(\xv,\ii),l_{ss}(\yv,\jj))\leq2r\right\},$$
where $l_{ss}(\xv,\ii)$ denotes the line through $\xv$ parallel to $e_{ss}(\ii)$.

For technical reasons we have to introduce the modified transversal stable ball. Since the IFS $\Phi$ satisfies the SSC, by Lemma~\ref{ldomsplit3b}, for a $\y=(\xv,\ii)\in\Lambda\times\Sigma^+$ we can define the stable direction $e_s(\y)$ of $\y$ by $e_s(\y):=e_s(\xv):=e_s(\ii_-)$, where $\pi_-(\ii_-)=\xv$. 

Then for a $(\xv,\ii)\in\Lambda\times\Sigma^+$, we define the modified transversal stable ball with radius $\delta$ by
\[
B^T_{\delta}(\xv,\ii)=\left\{(\yv,\jj)\in\Lambda\times\Sigma^+:\ii=\jj\text{ and }\mathrm{dist}_{e_s(\xv,\ii)}(l_{ss}(\xv,\ii),l_{ss}(\yv,\jj))\leq\delta\right\},
\]
where $\mathrm{dist}_{e_s(\xv,\ii)}(l_{ss}(\xv,\ii),l_{ss}(\yv,\jj))$ denotes the distance of the intersections of lines $l_{ss}(\xv,\ii)$ and $l_{ss}(\yv,\jj)$ with the subspace $e_s(\xv,\ii)$, see Figure~\ref{fdist}. %Denote $\mathrm{dist}_{e_s(\y)}$ the natural distance on the subspace $e_s(\y)$. 

\begin{figure}[h]
  \centering
  % Requires \usepackage{graphicx}
  \includegraphics[width=70mm]{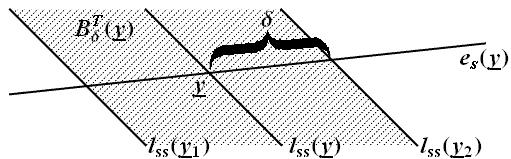}\\
  \caption{The modified transversal ball $B^T_{\delta}(\y)$.}\label{fdist}
\end{figure}

Since the included angle of $e_s(\ii),e_{ss}(\jj)$ is uniformly separated away from zero for every $\ii,\jj\in\Sigma$, there exists a constant $c>0$ that for every $\y\in\Lambda\times\Sigma^+$ and $r>0$
\begin{equation}\label{ecomptransballs}
B^T_{c^{-1}r}(\xv,\ii)\subseteq B^t_{r}(\xv,\ii)\subseteq B^T_{cr}(\xv,\ii).
\end{equation}

\begin{lemma}\label{ltoLY2}
For any $\y=(\xv,\ii)\in\Lambda\times\Sigma^+$ with $\xv=\pi_-(\dots,i_{-1})$
$$\mu(B^T_{\delta}(\y)\cap\mathcal{P}(\y))=\mu\left(B^T_{\|A_{i_{-1}}|e_s(F^{-1}(\y))\|^{-1}\delta}(F^{-1}(\y))\right)p_{i_{-1}},$$
where $(p_1,\dots,p_N)$ is probability vector corresponding to $\mu$.
\end{lemma}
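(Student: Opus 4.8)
The plan is to unwind the three ingredients appearing in the statement --- the inverse branch $F^{-1}$, the partition $\mathcal{P}$, and the modified transversal ball $B^T$ --- and to exploit that the linear part $A_{i_{-1}}$ of $f_{i_{-1}}$ conjugates the line fields $e_s,e_{ss}$ over $F^{-1}(\y)$ to those over $\y$. First I would record the identifications forced by the strong separation condition. Since $\xv=\pi_-(\dots,i_{-1})$ we have $\xv\in f_{i_{-1}}(\Lambda)$, so $\mathcal{P}(\y)=f_{i_{-1}}(\Lambda)\times\Sigma^+$; and by the conjugacy $\pi\circ\sigma=F\circ\pi$ (with $\pi$ a bijection under SSC and $\sigma$ invertible) the map $F$ is invertible on $\Lambda\times\Sigma^+$, with $F^{-1}(\y)=(\xv',\ii')$ where $\xv'=f_{i_{-1}}^{-1}(\xv)=\pi_-(\dots,i_{-2})$ and $\ii'=(i_{-1},i_0,i_1,\dots)$. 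The two-sided coding of $F^{-1}(\y)$ is $\sigma^{-1}$ of that of $\y$, so Lemma~\ref{ldomsplit1}(1) gives $A_{i_{-1}}e_s(F^{-1}(\y))=e_s(\y)$ and $A_{i_{-1}}e_{ss}(F^{-1}(\y))=e_{ss}(\y)$; since moreover the included angle between $e_s$ and $e_{ss}$ is uniformly bounded away from zero, all the quantities $\mathrm{dist}_{e_s(\cdot)}(\cdot,\cdot)$ below are well defined.

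The geometric heart of the argument is the claim that, for every $\z\in\Lambda$,
\[
\bigl(f_{i_{-1}}(\z),\ii\bigr)\in B^T_{\delta}(\y)\iff\bigl(\z,\ii'\bigr)\in B^T_{\|A_{i_{-1}}|e_s(F^{-1}(\y))\|^{-1}\delta}\bigl(F^{-1}(\y)\bigr).
\]
Being affine with linear part $A_{i_{-1}}$, the map $f_{i_{-1}}$ sends the line $l_{ss}(\z,\ii')$ to $l_{ss}(f_{i_{-1}}(\z),\ii)$ and the line $l_{ss}(\xv',\ii')$ to $l_{ss}(\xv,\ii)$, since $A_{i_{-1}}$ carries the direction $e_{ss}(\ii')$ to $e_{ss}(\ii)$. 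Now if two parallel lines are written $\ell'=\ell+tu$ with $u$ a unit vector, an affine map with linear part $L$ sends $\ell,\ell'$ to parallel lines differing by the translation $tLu$, so the gap between the images measured along the direction $Lu$ is $|t|\,\|L u\|$. Applying this with $L=A_{i_{-1}}$ and $u$ spanning $e_s(F^{-1}(\y))$ --- so that $Lu$ spans $e_s(\y)$ and $\|Lu\|=\|A_{i_{-1}}|e_s(F^{-1}(\y))\|$ --- converts the defining inequality of $B^T_{\delta}(\y)$ into exactly the defining inequality of the ball of radius $\|A_{i_{-1}}|e_s(F^{-1}(\y))\|^{-1}\delta$ over $F^{-1}(\y)$, proving the equivalence.

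Finally I would push the claim through the measure $\mu$. As $B^T_{\delta}(\y)$ lies in the single fibre $\Lambda\times\{\ii\}$, which we identify with $\Lambda$, the set $B^T_{\delta}(\y)\cap\mathcal{P}(\y)$ is the Borel subset $E:=\{\z\in f_{i_{-1}}(\Lambda):(\z,\ii)\in B^T_{\delta}(\y)\}$ of $\Lambda$. From \eqref{edefsameasure}, $\mu(E)=\sum_{j}p_j\,\mu(f_j^{-1}E)$, and by SSC the summands with $j\neq i_{-1}$ vanish because $f_j(\Lambda)\cap f_{i_{-1}}(\Lambda)=\emptyset$, so $\mu(E)=p_{i_{-1}}\,\mu(f_{i_{-1}}^{-1}E)$; since $\mu$ is supported on $\Lambda$, the set $f_{i_{-1}}^{-1}E\cap\Lambda=\{\z\in\Lambda:(f_{i_{-1}}(\z),\ii)\in B^T_\delta(\y)\}$ is, by the geometric claim, precisely the fibre of $B^T_{\|A_{i_{-1}}|e_s(F^{-1}(\y))\|^{-1}\delta}(F^{-1}(\y))$, which yields the asserted identity. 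The only genuinely delicate step is the geometric claim: one must keep track of which transversal the distance is measured along, and verify that $e_s$ and $e_{ss}$ transform under $A_{i_{-1}}$ exactly, not merely up to bounded distortion; everything else is routine bookkeeping with the strong separation condition and the self-affinity relation.
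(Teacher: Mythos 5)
Your proof is correct and follows essentially the same route as the paper: the key step in both is that $f_{i_{-1}}$ carries the $e_{ss}$-foliation over $F^{-1}(\y)$ to the one over $\y$ and scales the transversal gap by exactly $\|A_{i_{-1}}|e_s(F^{-1}(\y))\|$, which identifies $B^T_{\delta}(\y)\cap\mathcal{P}(\y)$ with the image of the smaller ball over $F^{-1}(\y)$. The only (immaterial) difference is in the final bookkeeping: the paper invokes the $F$-invariance of $\widehat{\mu}=\mu\times\nu$ together with $\widehat{\mu}(B\times[i_{-1}])=\mu(B)p_{i_{-1}}$, while you use the self-affinity relation \eqref{edefsameasure} plus the SSC to kill the terms with $j\neq i_{-1}$; these are equivalent.
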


\begin{proof}
Since the directions $e_s$ are $F$-invariant, we get for any $\y'=(\xv',\ii')$ and $\infty>\delta'>0$
$$F\left(B^T_{\delta'}(\y')\times[i_{0}']\right)=\left(\left(B^T_{\|A_{i_0'}|e_s(\y')\|\delta'}(F(\y'))\right)\cap\mathcal{P}(F(\y'))\right)\times\Sigma^+.$$
The map $F$ is invertible, hence
$$B^T_{\delta'}(\y')\times[i_{0}']=F^{-1}\left(\left(\left(B^T_{\|A_{i_0'}|e_s(\y')\|\delta'}(F(\y'))\right)\cap\mathcal{P}(F(\y'))\right)\times\Sigma^+\right).$$
By taking $\y=F(\y')$ we have $\|A_{i_0'}|e_s(\y')\|=\|A_{i_{-1}}|e_s(F^{-1}(\y))\|$ and by taking\\ $\delta=\|A_{i_{-1}}|e_s(F^{-1}(\y))\|\delta'$
$$B^T_{\|A_{i_{-1}}|e_s(F^{-1}(\y))\|^{-1}\delta}(F^{-1}(\y))\times[i_{-1}]=F^{-1}\left(\left(B^T_{\delta}(\y)\cap\mathcal{P}(\y)\right)\times\Sigma^+\right).$$

The measure $\widehat{\mu}$ is $F$-invariant, therefore
\begin{multline*}
\mu(B^T_{\delta}(\y)\cap\mathcal{P}(\y))=\widehat{\mu}(\left(B^T_{\delta}(\y)\cap\mathcal{P}(\y)\right)\times\Sigma^+)=\widehat{\mu}(F^{-1}\left(\left(B^T_{\delta}(\y)\cap\mathcal{P}(\y)\right)\times\Sigma^+\right))=\\
\widehat{\mu}(B^T_{\|A_{i_{-1}}|e_s(F^{-1}(\y))\|^{-1}\delta}(F^{-1}(\y))\times[i_{-1}])=\mu(B^T_{\|A_{i_{-1}}|e_s(F^{-1}(\y))\|^{-1}\delta}(F^{-1}(\y)))p_{i_{-1}}.
\end{multline*}
\end{proof}

\begin{lemma}\label{lmultiptoLY2}
For every $\y=(\xv,\ii)\in\Lambda\times\Sigma^+$ and $n\geq2$
%$$\|A^{(2)}(\ii)|e_s(\y)\|=\|A(\ii)|e_s(\y)\|\|A(\sigma\ii)|e_s(F(\y))\|.$$ In particular,
$$\|A^{(n)}(\ii)|e_s(\y)\|=\|A(\sigma^{n-1}\ii)|e_s(F^{n-1}(\y))\|\cdot\|A^{(n-1)}(\ii)|e_s(\y)\|.$$
\end{lemma}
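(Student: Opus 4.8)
The plan is to reduce the identity to the cocycle property of the linear parts together with the $F$-invariance of the line field $e_s$. First I would record the trivial factorization $A^{(n)}(\ii)=A(\sigma^{n-1}\ii)\,A^{(n-1)}(\ii)$, which is immediate from the definition $A^{(n)}(\ii)=A(\sigma^{n-1}\ii)\cdots A(\ii)$. The whole statement will then follow once we know that $A^{(n-1)}(\ii)$ carries the line $e_s(\y)$ onto the line $e_s(F^{n-1}(\y))$.

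The key step is precisely this invariance of the line field along the orbit. I would first establish the one-step version $A_{i_0}\,e_s(\y)=e_s(F(\y))$: by definition $e_s(\y)=e_s(\ii_-)$ with $\pi_-(\ii_-)=\xv$, and since $\pi_-(\dots,i_{-1},i_0)=f_{i_0}(\pi_-(\ii_-))=f_{i_0}(\xv)$, the first coordinate of $F(\y)=(f_{i_0}(\xv),\sigma\ii)$ is exactly $\pi_-((\sigma\ii)_-)$, so $e_s(F(\y))=e_s((\sigma\ii)_-)=e_s(\sigma\ii)$; Lemma~\ref{ldomsplit1}(1) gives $A(\ii)e_s(\ii)=e_s(\sigma\ii)$, which is the claim $A_{i_0}e_s(\y)=e_s(F(\y))$. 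Applying this identity successively at the points $\y,F(\y),\dots,F^{n-2}(\y)$ yields $A^{(n-1)}(\ii)e_s(\y)=e_s(F^{n-1}(\y))$ by a straightforward induction on $n$ (the base case $n=2$ being the one-step version itself).

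With these two facts I would finish by evaluating the operator norm along the line, using that $e_s$ is one-dimensional so that "norm along $e_s$" is unambiguous. Pick a unit vector $v\in e_s(\y)$. Then $A^{(n-1)}(\ii)v$ has length $\|A^{(n-1)}(\ii)|e_s(\y)\|$ and lies in the line $e_s(F^{n-1}(\y))$ by the previous paragraph, so applying $A(\sigma^{n-1}\ii)$ multiplies its length by exactly $\|A(\sigma^{n-1}\ii)|e_s(F^{n-1}(\y))\|$. Hence $\|A^{(n)}(\ii)v\|=\|A^{(n)}(\ii)|e_s(\y)\|$ equals the claimed product, and since $v$ was an arbitrary unit vector of the one-dimensional space $e_s(\y)$ this proves the lemma.

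I do not expect a genuine obstacle: the content is bookkeeping between the symbolic, dynamical and linear-algebraic descriptions. The only point requiring a little care is the identification $e_s(F(\y))=A_{i_0}\,e_s(\y)$, which rests on the compatibility $\pi_-(\dots,i_{-1},i_0)=f_{i_0}(\pi_-(\dots,i_{-1}))$ and on the invariance statement in Lemma~\ref{ldomsplit1}; everything else is the elementary multiplicativity of operator norms along an invariant one-dimensional subspace.
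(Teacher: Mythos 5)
Your proof is correct and follows essentially the same route as the paper: both arguments rest on the observation that the restricted operator norm on a one-dimensional subspace is realized by any unit vector, combined with the invariance $A^{(n-1)}(\ii)\,e_s(\y)=e_s(F^{n-1}(\y))$ coming from Lemma~\ref{ldomsplit1}. Your explicit verification of the one-step identification $e_s(F(\y))=A_{i_0}e_s(\y)$ via the compatibility $\pi_-(\dots,i_{-1},i_0)=f_{i_0}(\pi_-(\ii_-))$ is a welcome bit of extra care that the paper leaves implicit, but it does not change the substance of the argument.
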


\begin{proof}
By definition $\|A(\ii)|e_s(\y)\|=\sup_{v\in e_s(\y)}\frac{\|A(\ii)v\|}{\|v\|}$. On the other hand for every $v_1,v_2\in e_s(\y)$ there exists a constant $c\in\R$ such that $cv_1=v_2$. Therefore
$\|A(\ii)|e_s(\y)\|=\|A(\ii)v\|$ with any vector $v\in e_s(\y)$ with $\|v\|=1$. Hence,
\begin{multline*}
\|A^{(n)}(\ii)|e_s(\y)\|=\|A^{(n)}(\ii)v\|=\left\|A(\sigma^{n-1}\ii)\frac{A^{(n-1)}(\ii)v}{\|A^{(n-1)}(\ii)v\|}\right\|\|A^{(n-1)}(\ii)v\|=\\
\|A(\sigma^{n-1}\ii)|e_s(F^{n-1}(\y))\|\|A^{(n-1)}(\ii)|e_s(\y)\|,
\end{multline*} 
where we used in the last equation that $A^{(n-1)}(\ii)v\in e_s(F^{n-1}(\y))$, see Lemma~\ref{ldomsplit1}.
\end{proof}

Let us define functions $g(\y):=\cm{\y}{\xi^{ss}}(\mathcal{P}(\y))$ and $g_{\delta}(\y):=\frac{\mu(B^T_{\delta}(\y)\cap\mathcal{P}(\y))}{\mu(B^T_{\delta}(\y))}$. By definition and \eqref{establecondmeasure}, $g_{\delta}\rightarrow g$ as $\delta\rightarrow0+$ for $\mu$-almost everywhere and, since $g_{\delta}$ is uniformly bounded, \eqref{econdmeasure} implies $g_{\delta}\rightarrow g$ in $L^1(\widehat{\mu})$ as $\delta\rightarrow0+$.

\begin{lemma}\label{ltoLY2b}
The function $\sup_{\delta>0}\left\{-\log g_{\delta}\right\}$ is in $L^1(\widehat{\mu})$.
\end{lemma}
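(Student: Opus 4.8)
The plan is to show that the maximal function $G(\y):=\sup_{\delta>0}\{-\log g_\delta(\y)\}$ is integrable by obtaining a pointwise upper bound on $-\log g_\delta(\y)$ that is independent of $\delta$, up to a term depending only on the symbolic data, and then integrating. Recall $g_\delta(\y)=\mu(B^T_\delta(\y)\cap\mathcal P(\y))/\mu(B^T_\delta(\y))$, so $-\log g_\delta(\y)\ge 0$ always, and the only danger is that $g_\delta(\y)$ can be arbitrarily close to $0$. Since $\mathcal P=\{f_i(\Lambda)\times\Sigma^+\}_{i=1}^N$ is a \emph{finite} partition and, by the strong separation condition, the atoms $f_i(\Lambda)$ are at positive distance $\kappa=\min_{i\ne j}\mathrm{dist}(f_i(\Lambda),f_j(\Lambda))>0$ from each other, the atom $\mathcal P(\y)$ of the point $\y=(\xv,\ii)$ is the atom $f_{i_{-1}}(\Lambda)\times\Sigma^+$ (the first symbol on the negative side determines which piece $\xv$ lies in). This is the key structural input: for small transversal radius the modified transversal ball $B^T_\delta(\y)$ cannot meet more than a bounded number of the $\mathcal P$-atoms, and in fact we want to say it essentially sees $\mathcal P(\y)$ with a definite proportion.

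First I would make the reduction to symbolic language. Using the ``cocycle'' identity of Lemma~\ref{ltoLY2} iterated via Lemma~\ref{lmultiptoLY2}, for a point $\y=(\xv,\ii)$ with $\xv=\pi_-(\dots,i_{-1})$ and for $\delta$ in the dyadic range determined by $\|A^{(n)}(\sigma^{-n}\ii)|e_s\|$, the quantity $g_\delta(\y)$ is comparable to a ratio of the form $\mu(B^T_{\delta_n}(F^{-n}\y)\cap\mathcal P(F^{-n}\y))/\mu(B^T_{\delta_n}(F^{-n}\y))$ where now the radius $\delta_n$ is bounded away from $0$ (it lives in a fixed compact range, of order the inverse of a contraction over $n$ steps rescaled back). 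More precisely I would fix a radius scale $\delta_0>0$, small relative to $\kappa$ and to the geometry of the multicone, and note that every $\delta>0$ falls, after pulling back by the appropriate power $F^{-n}$ that expands the $e_s$-direction, into the window $[\delta_0/\Theta,\delta_0]$ where $\Theta:=\max_i\|A_i|e_s\|^{-1}<\infty$ (a finite constant since there are finitely many matrices and the norms along $e_s$ are bounded below by the contraction ratios). Thus it suffices to bound $\sup_{\delta\in[\delta_0/\Theta,\delta_0]}\{-\log g_\delta(\y)\}$ for the fixed window, plus controlling how many pullback steps were needed; the latter contributes a term comparable to $n(\y)$, which is controlled by $-\log\|A^{(\cdot)}(\cdot)|e_s\|$ and is integrable by \eqref{elyapint}.

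Next I would bound $-\log g_\delta(\y)$ uniformly for $\delta$ in the fixed window $[\delta_0/\Theta,\delta_0]$. Here I would use the fact, noted after Lemma~\ref{ldomsplit3b}, that the included angle between $e_s(\ii)$ and $e_{ss}(\jj)$ is uniformly bounded away from $0$, and that $e_{ss}$ depends continuously on $\ii_+\in\Sigma^+$ (hence is ``close to constant'' on a cylinder $[i_0\cdots i_{m-1}]$ for $m$ large). Fix $m$ so large that on each cylinder $[\underline j]$ of length $m$ the direction $e_{ss}$ varies by less than a tiny angle. Then the transversal ball $B^T_\delta(\y)$, restricted to symbolic fibre $\{\ii\}$, is comparable to a slab of width $\asymp\delta$ transverse to a nearly fixed direction; by strong separation the atoms $f_i(\Lambda)$ occupy disjoint closed regions, and $\mu$ gives mass $p_i$ to $f_i(\Lambda)$. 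The ratio $g_\delta(\y)$ is then bounded below by a quantity of the form $\mu$ restricted to the slab intersected with one atom over $\mu$ of the slab, which is $\ge$ some positive constant $c(\delta_0)$ unless $\xv$ lies within distance $\asymp\delta_0$ of the boundary between two atoms. So I would split: $-\log g_\delta(\y)\le -\log c(\delta_0)$ on the ``bulk'' set, and on the exceptional set $E_{\delta_0}$ (points $\xv$ within $\delta_0$ of the $\kappa$-separated boundaries) I would bound $-\log g_\delta(\y)$ by $-\log\mu(B^T_\delta(\y)\cap\mathcal P(\y))$ plus a constant, and integrate using the basic estimate $\int -\log\mu(B^T_\delta(\y))\,d\widehat\mu(\y)<\infty$ which follows from the doubling-type bound on $\mu$ in the transversal direction (a standard consequence, cf. the argument used for \eqref{eSSC}) — in any case $\widehat\mu(E_{\delta_0})\to 0$ and on it $-\log g_\delta$ has an $L^1$ majorant by the separation of atoms.

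The main obstacle I expect is precisely this last point: controlling $g_\delta(\y)$ near the ``seams'' where $B^T_\delta(\y)$ straddles two atoms $f_i(\Lambda)$, $f_j(\Lambda)$. Because of strong separation these atoms are at distance $\ge\kappa$, so for $\delta<\kappa/2$ the ball $B^T_\delta(\y)$ lies entirely in $\mathcal P(\y)$ and then $g_\delta(\y)=1$, giving $-\log g_\delta(\y)=0$ outright — this is in fact the decisive simplification that the SSC buys us, and it means the only $\delta$ that matter are the ``large'' ones $\delta\ge\kappa/2$, for which $\mu(B^T_\delta(\y))\ge\mu(B^T_{\kappa/2}(\y))>0$ is bounded below by a positive constant uniformly in $\y$ (support compact), hence $-\log g_\delta(\y)\le -\log\mu(B^T_{\kappa/2}(\y)\cap\mathcal P(\y))+C\le C'$ for a uniform constant. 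Combining with the integrable pullback-depth term from the second paragraph yields $G\in L^1(\widehat\mu)$. So the real content is the bookkeeping of the pullback reduction (first and second paragraphs); the uniform bound itself is then immediate from SSC.
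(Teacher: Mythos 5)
The decisive step of your argument --- that for $\delta<\kappa/2$ the set $B^T_{\delta}(\y)$ is contained in $\mathcal{P}(\y)$, hence $g_{\delta}(\y)=1$ --- is false, and it rests on a misreading of what $B^T_{\delta}$ is. The modified transversal ball is not a Euclidean ball: it is the set of all $(\yv,\ii)$ with $\yv\in\Lambda$ whose strong-stable line $l_{ss}(\yv,\ii)$ passes within $\delta$ of $l_{ss}(\xv,\ii)$ (distance measured along $e_s(\xv)$). It is a slab of width $\delta$, unbounded in the $e_{ss}(\ii)$-direction. The SSC guarantees that the atoms $f_i(\Lambda)$ are $\kappa$-separated in $\R^2$, but their shadows under the projection $\pr{\ii}$ along $e_{ss}(\ii)$ can overlap completely, so for arbitrarily small $\delta$ the slab $B^T_{\delta}(\y)$ typically meets several atoms and $g_{\delta}(\y)$ stays bounded away from $1$. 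Indeed $g_{\delta}\to g(\y)=\cm{\y}{\xi^{ss}}(\mathcal{P}(\y))$, and if your claim were correct one would conclude $H(F\xi^{ss}|\xi^{ss})=0$ identically, which trivialises Proposition~\ref{pLY1} and is false in any nontrivial example. The pullback bookkeeping in your first two paragraphs also does not go through as stated: the numerator $\mu(B^T_{\delta}(\y)\cap\mathcal{P}(\y))$ obeys the cocycle identity of Lemma~\ref{ltoLY2}, but the denominator $\mu(B^T_{\delta}(\y))$ involves all the atoms the slab meets and has no comparable $F$-covariance, so $g_{\delta}$ itself cannot be reduced to a fixed window of radii in this way.

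What the lemma actually requires is a weak-type $(1,1)$ maximal inequality. The paper's proof shows $\widehat{\mu}\left\{\y:\inf_{\delta>0}g_{\delta}(\y)<e^{-k}\right\}\leq cNe^{-k}$ by, for each fixed $\ii$ and each $i$, covering the bad set by the transversal balls witnessing the smallness of the ratio, applying the Besicovitch covering theorem to extract $c$ disjoint subfamilies, and summing the numerator bounds $\mu(B\cap f_i(\Lambda))\leq e^{-k}\mu(B)$ over each disjoint family; summability over $k$ then gives integrability of the supremum. Nothing playing the role of this covering argument appears in your proposal, so the gap is genuine.
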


\begin{proof}
To verify the statement of the lemma, it is enough to show that
$$\sum_{k=1}^{\infty}\widehat{\mu}\left\{\y: \inf_{\delta>0}g_{\delta}(\y)<e^{-k}\right\}<\infty.$$
By \eqref{establecondmeasure}
\begin{equation}\label{eseged}
\widehat{\mu}\left\{\y: \inf_{\delta>0}g_{\delta}(\y)<e^{-k}\right\}=\sum_{i=1}^N\int\mu\left\{\xv\in f_i(\Lambda):\inf_{\delta>0}\frac{\mu(B^T_{\delta}(\xv,\ii)\cap f_i(\Lambda))}{\mu(B^T_{\delta}(\xv,\ii))}<e^{-k}\right\}d\nu(\ii).
\end{equation}
For a fixed $\ii\in\Sigma^+$ denote by $E_{k,\ii}^i$ the set $\left\{\xv\in f_i(\Lambda):\inf_{\delta>0}\frac{\mu(B^T_{\delta}(\xv,\ii)\cap f_i(\Lambda))}{\mu(B^T_{\delta}(\xv,\ii))}<e^{-k}\right\}$. Let $$\mathcal{E}_{k,\ii}^i:=\left\{B_{\delta}^T(\xv,\ii):\frac{\mu(B^T_{\delta}(\xv,\ii)\cap f_i(\Lambda))}{\mu(B^T_{\delta}(\xv,\ii))}<e^{-k}\right\}$$ be the corresponding collection of closed transversal balls. Then $\mathcal{E}_{k,\ii}^i$ is a cover of $E_{k,\ii}^i$, so by the Besicovitch Covering Theorem there exists a constant $c>0$ independent of $\ii$, $i$ and $k$, such that there are $c$ countable families of balls $\mathcal{F}_n$, $n=1,\dots,c$ with $\bigcup_{n=1}^c\mathcal{F}_n\subseteq\mathcal{E}_{k,\ii}^i$, such that $$E_{k,\ii}^i\subseteq\bigcup_{n=1}^c\bigcup_{B\in\mathcal{F}_n}B\text{\quad and\quad }B'\cap B''=\emptyset\text{ if }B',B''\in\mathcal{F}_j\text{ for }j=1,\dots,n.$$
Hence,
$$\mu\left\{\xv\in f_i(\Lambda):\inf_{\delta>0}\frac{\mu(B^T_{\delta}(\xv,\ii)\cap f_i(\Lambda))}{\mu(B^T_{\delta}(\xv,\ii))}<e^{-k}\right\}\leq\sum_{n=1}^c\sum_{B\in\mathcal{F}_n}\mu(B\cap f_i(\Lambda))\leq e^{-k}\sum_{n=1}^c\sum_{B\in\mathcal{F}_n}\mu(B)\leq ce^{-k}.$$
Therefore, by \eqref{eseged}
$$
\sum_{k=1}^{\infty}\widehat{\mu}\left\{\y: \inf_{\delta>0}g_{\delta}(\y)<e^{-k}\right\}\leq \sum_{k=1}^{\infty}cNe^{-k}<\infty.
$$
\end{proof}

The proof of the Proposition~\ref{pLY2} uses a slight modification of the result of Maker~\cite{M}.

\begin{lemma}[Maker, \cite{M}]\label{lmaker}
	Let $T:X\mapsto X$ be an endomorphism on $X\subset\R^d$ compact set and let $m$ be a $T$-invariant ergodic measure. Moreover, let $h_{n,k}:X\mapsto\R$ be a family of functions s.t. $\sup_{n,k}h_{n,k}\in L^1(m)$ and $\lim_{n-k\rightarrow\infty}h_{n,k}=h$ in $L^1(m)$ sense and $m$-almost everywhere, where $h\in L^1(m)$. Then
	\[
	\lim_{n\rightarrow\infty}\frac{1}{n}\sum_{k=0}^{n-1}h_{n,k}(T^kx)=\int h(x)dm(x)\text{ for $m$-a.e $x\in X$.}
	\]
\end{lemma}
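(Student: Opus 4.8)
The plan is to follow Maker's argument \cite{M}, reducing the statement to the Birkhoff ergodic theorem applied to two \emph{fixed} integrable functions. Set $G:=\sup_{n,k}|h_{n,k}-h|$; the hypotheses ensure $G\in L^1(m)$ (in the situation in which the lemma is applied the functions $h_{n,k}$ and $h$ are nonnegative, so $|h_{n,k}-h|\le h_{n,k}+h\le 2\sup_{n,k}h_{n,k}$, which is integrable by Lemma~\ref{ltoLY2b}). By the Birkhoff ergodic theorem, for $m$-a.e.\ $x$ we have $\frac1n\sum_{k=0}^{n-1}h(T^kx)\to\int h\,dm$ and $\frac1n\sum_{k=0}^{n-1}G(T^kx)\to\int G\,dm$; fix such an $x$. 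It then suffices to show
\[
\frac1n\sum_{k=0}^{n-1}\bigl|h_{n,k}(T^kx)-h(T^kx)\bigr|\longrightarrow 0,
\]
since adding this to the Birkhoff limit for $h$ gives $\frac1n\sum_{k=0}^{n-1}h_{n,k}(T^kx)\to\int h\,dm$.

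For $J\in\N$ put $\phi_J:=\sup\{\,|h_{n,k}-h| : 0\le k\le n-1,\ n-k\ge J\,\}$. This is a countable supremum, hence measurable; the sequence $(\phi_J)_J$ is nonincreasing and dominated by $G$; and because $h_{n,k}\to h$ $m$-a.e.\ as $n-k\to\infty$, we have $\phi_J\to 0$ $m$-a.e., so dominated convergence gives $\int\phi_J\,dm\to 0$. Fix $J$ and, for $n\ge J$, split the average at $k=n-J$:
\[
\frac1n\sum_{k=0}^{n-1}\bigl|h_{n,k}(T^kx)-h(T^kx)\bigr|\ \le\ \frac1n\sum_{k=0}^{n-J}\phi_J(T^kx)\ +\ \frac1n\sum_{k=n-J+1}^{n-1}G(T^kx).
\]
The first term is at most $\frac1n\sum_{k=0}^{n-1}\phi_J(T^kx)\to\int\phi_J\,dm$ by Birkhoff. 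For the second, writing $S_n:=\sum_{k=0}^{n-1}G(T^kx)$ we get $\frac1n\sum_{k=n-J+1}^{n-1}G(T^kx)=\frac{S_n}{n}-\frac{n-J+1}{n}\cdot\frac{S_{n-J+1}}{n-J+1}\to\int G\,dm-\int G\,dm=0$ as $n\to\infty$ with $J$ fixed. Hence $\limsup_{n\to\infty}\frac1n\sum_{k=0}^{n-1}\bigl|h_{n,k}(T^kx)-h(T^kx)\bigr|\le\int\phi_J\,dm$. Intersecting over $J\in\N$ the (countably many) exceptional null sets and letting $J\to\infty$ yields the desired convergence, and with it the lemma.

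The points requiring care are: (i) the integrability of $G=\sup_{n,k}|h_{n,k}-h|$, which is where the domination hypothesis is really used and which, as noted, is automatic in the application; and (ii) the use of the a.e.\ (not merely $L^1$) convergence $h_{n,k}\to h$, needed precisely to force $\phi_J\downarrow 0$ pointwise $m$-a.e. The genuine crux is the splitting of the Birkhoff-type average into a ``stabilised'' block of length $n-J+1$, on which $h_{n,k}$ is uniformly within $\phi_J$ of $h$, and a short tail consisting of the $J-1$ highest indices: the tail is $O(1/n)$ times a Birkhoff sum of the fixed integrable function $G$, hence negligible for each fixed $J$, and only afterwards does one let $J\to\infty$.
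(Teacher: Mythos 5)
Your proof is correct, and it is essentially Maker's original argument: reduce to showing the averaged discrepancy $\frac1n\sum_{k=0}^{n-1}|h_{n,k}-h|(T^kx)$ vanishes, by splitting at $k=n-J$ into a block where $|h_{n,k}-h|\le\phi_J$ and a tail of bounded length controlled by the Birkhoff sums of the dominating function $G$, then letting $J\to\infty$ after $n\to\infty$. The paper itself offers no proof of Lemma~\ref{lmaker} --- it is quoted from \cite{M} --- so there is nothing to compare against beyond noting that your argument is the standard one and is complete. Two small remarks. First, you correctly observe that the hypothesis as literally stated ($\sup_{n,k}h_{n,k}\in L^1(m)$) does not by itself dominate $|h_{n,k}-h|$; one needs $\sup_{n,k}|h_{n,k}|\in L^1(m)$, which is what the application actually supplies. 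Second, in that application the functions $h_{n,k}=\log g_{\delta}$ are non\emph{positive} (since $g_{\delta}\le1$), not nonnegative as you wrote; the required domination is exactly $\sup_{\delta>0}\{-\log g_{\delta}\}\in L^1(\widehat{\mu})$, which is the content of Lemma~\ref{ltoLY2b}, so the conclusion stands with the obvious sign flip. Everything else --- the measurability of $\phi_J$ as a countable supremum, the dominated convergence giving $\int\phi_J\,dm\to0$, and the telescoping estimate $\frac1n\sum_{k=n-J+1}^{n-1}G(T^kx)\to0$ for fixed $J$ --- is sound.
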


\begin{proof}[Proof of Proposition~\ref{pLY2}]
By the definition of the transversal measure, the statement of the proposition is equivalent to
\begin{equation*}%\label{elocdimLY1}
\lim_{\delta\rightarrow0+}\frac{\log\mu(B^t_{\delta}(\y))}{\log\delta}=\frac{h_{\nu}-H(F\xi^{ss}|\xi^{ss})}{\ly{s}}\text{ for $\widehat{\mu}$-a.e $\y$.}
\end{equation*}
Hence, by \eqref{ecomptransballs} it is enough to show that
\begin{equation*}%\label{elocdimLY1}
\lim_{\delta\rightarrow0+}\frac{\log\mu(B^T_{\delta}(\y))}{\log\delta}=\frac{h_{\nu}-H(F\xi^{ss}|\xi^{ss})}{\ly{s}}\text{ for $\widehat{\mu}$-a.e $\y$.}
\end{equation*}
By Lemma~\ref{ldomsplit2}, it is sufficient to show that
\begin{equation}\label{elocdimLY2}
\lim_{n\rightarrow\infty}\frac{\log\mu\left(B^T_{\|A_{i_{-1}}\cdots A_{i_{-n}}|e_s(F^{-n}(\y))\|}(\y)\right)}{\log\alpha_1(A_{i_{-1}}\cdots A_{i_{-n}})}=\frac{h_{\nu}-H(F\xi^{ss}|\xi^{ss})}{\ly{s}}\text{ for $\widehat{\mu}$-a.e $\y$,}
\end{equation}
where $\y=(\xv,\ii)\in\Lambda\times\Sigma^+$ with $\xv=\pi_-(\dots,i_{-2},i_{-1})$. We write the measure of the ball as
\begin{multline*}
\mu\left(B^T_{\|A_{i_{-1}}\cdots A_{i_{-n}}|e_s(F^{-n}(\y))\|}(\y)\right)=\\
\mu(B^T_{1}(F^{-n}(\y)))\frac{\mu(B^T_{\|A_{i_{-n}}|e_s(F^{-n}(\y))\|}(F^{-n+1}(\y)))}{\mu(B^T_{1}(F^{-n}(\y)))}\prod_{k=1}^{n-1}
\frac{\mu\left(B^T_{\|A_{i_{-k}}\cdots A_{i_{-n}}|e_s(F^{-n}(\y))\|}(F^{-k+1}(\y))\right)}{\mu\left(B^T_{\|A_{i_{-k-1}}\cdots A_{i_{-n}}|e_s(F^{-n}(\y))\|}(F^{-k}(\y))\right)}.
\end{multline*}
Applying Lemma~\ref{ltoLY2} and Lemma~\ref{lmultiptoLY2} we get for every $k=1,\dots,n$
\begin{multline*}
\mu\left(B^T_{\|A_{i_{-k-1}}\cdots A_{i_{-n}}|e_s(F^{-n}(\y))\|}(F^{-k}(\y))\right)=\\
\mu\left(B^T_{\|A_{i_{-k}}|e_s(F^{-k}(\y))\|^{-1}\|A_{i_{-k}}\cdots A_{i_{-n}}|e_s(F^{-n}(\y))\|}(F^{-k}(\y))\right)=\\
\mu\left(B^T_{\|A_{i_{-k}}\cdots A_{i_{-n}}|e_s(F^{-n}(\y))\|}(F^{-k+1}(\y))\cap\mathcal{P}(F^{-k+1}(\y))\right)p_{i_{-k}}^{-1}.
\end{multline*}
Hence,
\begin{multline*}
\frac{1}{n}\log\mu\left(B^T_{\|A_{i_{-1}}\cdots A_{i_{-n}}|e_s(F^{-n}(\y))\|}(\y)\right)=\\
\frac{1}{n}\log\mu(B^T_{1}(F^{-n}(\y)))-\frac{1}{n}\sum_{k=1}^n\log g_{\|A_{i_{-k}}\cdots A_{i_{-n}}|e_s(F^{-n}(\y))\|}(F^{-k+1}(\y))+\frac{1}{n}\sum_{k=1}^n\log p_{i_{-k}}.
\end{multline*}
Let us define a function $h_{n,k}(\y):=\log g_{\|A_{i_{-1}}\cdots A_{i_{-n+k-1}}|e_s(F^{-n+k-1}(\y))\|}(\y)$. Then
\begin{multline*}
\frac{1}{n}\log\mu\left(B^T_{\|A_{i_{-1}}\cdots A_{i_{-n}}|e_s(F^{-n}(\y))\|}(\y)\right)=\\
\frac{1}{n}\log\mu(B^T_{1}(F^{-n}(\y)))-\frac{1}{n}\sum_{k=1}^nh_{n,k}(F^{-k+1}(\y))+\frac{1}{n}\sum_{k=1}^n\log p_{i_{-k}}.
\end{multline*}
Since $\|A_{i_{-1}}\cdots A_{i_{-n+k-1}}|e_s(F^{-n+k-1}(\y))\|\rightarrow0$ uniformly on $\Lambda\times\Sigma^+$ as $n-k\rightarrow\infty$, $\lim_{n-k\rightarrow\infty}h_{n,k}=\log g\text{ in $L^1(\widehat{\mu})$ and $\widehat{\mu}$-almost everywhere.}$ Moreover, by Lemma~\ref{ltoLY2b} we can apply Maker's ergodic theorem Lemma~\ref{lmaker}. Thus, by \eqref{econdenteq}
\begin{multline*}
\lim_{n\rightarrow\infty}\frac{1}{n}\sum_{k=1}^nh_{n,k}(F^{-k+1}(\y))=\int\log g(\y)d\widehat{\mu}(\y)\\
=\int\log\cm{\y}{\xi^{ss}}(\mathcal{P}(\y))d\widehat{\mu}(\y)=-H(\mathcal{P}|\xi^{ss})=-H(F\xi^{ss}|\xi^{ss})\text{ for $\widehat{\mu}$-a.e $\y$.}
\end{multline*}
On the other hand
\begin{eqnarray*}
&& \lim_{n\rightarrow\infty}\frac{1}{n}\log\mu(B^T_{1}(F^{-n}(\y)))=0\text{ for every $\y\in\Lambda\times\Sigma^+$ and}\\
&& \lim_{n\rightarrow\infty}\frac{1}{n}\sum_{k=1}^n\log p_{i_{-k}}=\sum_{i=1}^Np_i\log p_i=-h_{\nu}\text{ for $\widehat{\mu}$-a.e $\y\in\Lambda\times\Sigma^+$,}
\end{eqnarray*}
which implies
\begin{equation}\label{eentLY2}
\lim_{n\rightarrow\infty}\frac{1}{n}\log\mu\left(B^T_{\|A_{i_{-1}}\cdots A_{i_{-n}}|e_s(F^{-n}(\y))\|}(\y)\right)=-h_{\nu}+H(F\xi^{ss}|\xi^{ss})
\end{equation}
Applying Oseledec's Theorem, we have
$$\lim_{n\to\infty}\frac{1}{n}\log\alpha_1(A_{i_{-1}}\cdots A_{i_{-n}})=-\ly{s}\text{ for $\nu$-a.e $\ii$,}$$
which together with the equation \eqref{eentLY2} implies \eqref{elocdimLY2}.
\end{proof}

%Let $B_r(\xv)$ be usual Euclidean ball centered at $\xv\in\R^2$ with radius $r$.

Let $B^s_r(\xv,\ii)$ denote the square on $\Lambda\times\left\{\ii\right\}$ with a side parallel to $e_{ss}(\ii)$ and length $2r$ centered at $(\xv,\ii)\in\Lambda\times\Sigma^+$. It is easy to see that there exists a constant $c>0$ that for every $(\xv,\ii)\in\Lambda\times\Sigma^+$, \begin{equation}\label{ecompballs}B_{c^{-1}r}(\xv)\subseteq B^s_r(\xv,\ii)\subseteq B_{cr}(\xv),\end{equation} where $B_r(\xv)$ is the usual Euclidean ball on $\R^2$.

\begin{prop}\label{pLY3}
For $\mu$-a.e $\xv\in\Lambda$
$$\liminf_{r\rightarrow0+}\frac{\log\mu(B_r(\xv))}{\log r}\geq\frac{H(F\xi^{ss}|\xi^{ss})}{\ly{ss}}+\frac{h_{\nu}-H(F\xi^{ss}|\xi^{ss})}{\ly{s}}.$$
\end{prop}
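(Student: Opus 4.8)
The plan is to reproduce, in the present setting, the product-structure argument of \cite[Lemma~11.3.1]{LY2}: one bounds $\mu(B_r(\xv))$ from above by a leafwise conditional mass of order $r^{d_{ss}}$, controlled by Proposition~\ref{pLY1}, times a transversal mass of order $r^{d_s}$, controlled by Proposition~\ref{pLY2}, where $d_{ss}:=H(F\xi^{ss}|\xi^{ss})/\ly{ss}$ and $d_s:=(h_{\nu}-H(F\xi^{ss}|\xi^{ss}))/\ly{s}$. A preliminary observation, used throughout, is that since $\widehat{\mu}=\mu\times\nu$ and on each slice $\Lambda\times\{\ii\}$ the partition $\xi^{ss}$ is simply the partition of $\Lambda$ into the lines parallel to $e_{ss}(\ii)$ (which does not depend on the base point), uniqueness of Rokhlin's conditional measures together with Fubini identifies, for $\widehat{\mu}$-a.e. $(\xv,\ii)$, the measure $\cm{(\xv,\ii)}{\xi^{ss}}$ with the conditional measure of $\mu$ on the line through $\xv$ parallel to $e_{ss}(\ii)$, taken in the disintegration $\mu=\int\mu^{\ii}_{\bar z}\,d\mu^T_{\ii}(\bar z)$ of $\mu$ over the fibres $\pr{\ii}^{-1}(\bar z)$; here I abbreviate by $\mu^{\ii}_{\bar z}$ that conditional measure and by $\mu^T_{\ii}=(\pr{\ii})_*\mu$ the transversal measure of \eqref{etransmeasure}.

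Fix $\varepsilon>0$. Proposition~\ref{pLY1} and Egorov's theorem give a set $G\subseteq\Lambda\times\Sigma^+$ with $\widehat{\mu}(G)>1-\varepsilon^2$ and an $r_0>0$ such that $\cm{\y}{\xi^{ss}}(B^{ss}_{\rho}(\y))\le\rho^{d_{ss}-\varepsilon}$ for every $\y\in G$ and $0<\rho<r_0$. Because $\widehat{\mu}=\mu\times\nu$, the set of $\ii$ with $\mu(G^{\ii})>1-\varepsilon$ (where $G^{\ii}=\{\xv:(\xv,\ii)\in G\}$) has $\nu$-measure $>1-\varepsilon$, so we may fix such an $\ii$ which moreover is generic for Proposition~\ref{pLY2} (so $\mu^T_{\ii}$ is exact dimensional of dimension $d_s$) and for which the identification above and the leafwise constancy of the conditional measures both hold for $\mu$-a.e. point of the slice $\{\ii\}$. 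It then suffices to prove $\underline{d}_{\mu}(\xv)\ge d_{ss}+d_s-2\varepsilon$ for $\mu$-a.e. $\xv\in G^{\ii}$, since letting $\varepsilon=1/k\to0$ and taking $T:=\limsup_k T_k$ of the resulting sets $T_k$ (each of $\mu$-measure $\ge1-1/k$) produces a $\mu$-conull set on which $\underline{d}_{\mu}(\xv)\ge d_{ss}+d_s$. We restrict attention to those $\xv\in G^{\ii}$ that are $\mu$-density points of $G^{\ii}$, whose projection $\bar z_0:=\pr{\ii}(\xv)$ is typical for the exact dimensionality of $\mu^T_{\ii}$, and which lie on a leaf along which the conditional measures are constant; this describes $\mu$-a.e. $\xv\in G^{\ii}$.

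For such $\xv$ and small $r$, estimate $\mu(B^s_r(\xv,\ii))$, which by \eqref{ecompballs} is comparable to $\mu(B_r(\xv))$. Disintegrating over the fibres of $\pr{\ii}$,
\[
\mu(B^s_r(\xv,\ii))=\int_{N_r}\mu^{\ii}_{\bar z}\big(B^s_r(\xv,\ii)\cap L_{\bar z}\big)\,d\mu^T_{\ii}(\bar z),
\]
where $L_{\bar z}=\pr{\ii}^{-1}(\bar z)$ and $N_r=\{\bar z:L_{\bar z}\cap B^s_r(\xv,\ii)\cap\Lambda\ne\emptyset\}$; since $\pr{\ii}$ collapses the square $B^s_r(\xv,\ii)$ onto an interval of length $2r$ centred at $\bar z_0$, one has $N_r\subseteq B_r(\bar z_0)$ inside the transversal line. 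Call a leaf $L_{\bar z}$ with $\bar z\in N_r$ \emph{good} if $\mu^{\ii}_{\bar z}(B^s_r(\xv,\ii)\cap L_{\bar z}\cap G^{\ii})\ge\tfrac{1}{2}\mu^{\ii}_{\bar z}(B^s_r(\xv,\ii)\cap L_{\bar z})$, and \emph{bad} otherwise. On a good leaf the segment $B^s_r(\xv,\ii)\cap L_{\bar z}$ has positive $\mu^{\ii}_{\bar z}$-mass in $G^{\ii}$, hence contains a point $\z\in G$; as this segment has length $\le2r$ it is contained in $B^{ss}_{2r}(\z)$, so (using the identification and $2r<r_0$) its $\mu^{\ii}_{\bar z}$-measure is $\le(2r)^{d_{ss}-\varepsilon}$, and the good leaves contribute at most $(2r)^{d_{ss}-\varepsilon}\mu^T_{\ii}(N_r)$. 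On a bad leaf the segment has $\mu^{\ii}_{\bar z}$-mass at most twice its $\mu^{\ii}_{\bar z}$-mass of $G^{\ii,c}$, so the bad leaves contribute at most $2\mu(B^s_r(\xv,\ii)\cap G^{\ii,c})$, which is $\le\tfrac{1}{4}\mu(B^s_r(\xv,\ii))$ once $r$ is small enough because $\xv$ is a $\mu$-density point of $G^{\ii}$. Absorbing this term gives $\mu(B^s_r(\xv,\ii))\le2(2r)^{d_{ss}-\varepsilon}\mu^T_{\ii}(B_r(\bar z_0))$ for all small $r$, and since $\bar z_0$ is typical for the exact dimensionality of $\mu^T_{\ii}$ we have $\mu^T_{\ii}(B_r(\bar z_0))\le r^{d_s-\varepsilon}$ for small $r$; hence $\mu(B^s_r(\xv,\ii))\le Cr^{d_{ss}+d_s-2\varepsilon}$ and $\underline{d}_{\mu}(\xv)\ge d_{ss}+d_s-2\varepsilon$, as required.

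The delicate point is the good/bad leaf dichotomy together with the absorption step: the leafwise bound of Proposition~\ref{pLY1} is uniform only on an Egorov set $G$, so one has to control the part of $B^s_r(\xv,\ii)$ sitting over leaves that miss $G$, and this is precisely what forces fixing a generic symbolic future $\ii$, passing to the fibre $\{\ii\}$ of the product $\widehat{\mu}=\mu\times\nu$, and invoking the Lebesgue density theorem for $\mu$. A subsidiary technical point is the measurable identification of $\cm{(\xv,\ii)}{\xi^{ss}}$ with the line-conditional measures $\mu^{\ii}_{\bar z}$ of $\mu$: it follows from uniqueness of Rokhlin conditional measures applied to the product $\mu\times\nu$, but must be arranged carefully so that the Egorov estimate for $\cm{\y}{\xi^{ss}}$ transports to the $\mu^{\ii}_{\bar z}$.
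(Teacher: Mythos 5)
Your proposal is correct and follows essentially the same route as the paper's proof: Egorov applied to Proposition~\ref{pLY1} to get a uniform leafwise bound on a large set, the Lebesgue density theorem to control the leaves (or portions of leaves) missing that set, the radius-doubling trick to recentre the leafwise ball at a point of the Egorov set, and Proposition~\ref{pLY2} for the transversal factor. The only differences are bookkeeping: the paper stays on the product space with $\widehat{\mu}$, bounds $\mu(B^s)\leq 2\mu(B^s\cap J_1)$ directly and integrates the conditional measures over the transversal ball $B^t$, whereas you fix a generic fibre $\ii$, disintegrate $\mu$ over $\pr{\ii}$ and run a good/bad leaf dichotomy with an absorption step — an equivalent arrangement of the same estimate.
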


\begin{proof}
By \eqref{ecompballs}, it is enough to show that
\begin{equation}\label{eLY3}
\liminf_{r\rightarrow0+}\frac{\log\mu(B_r^s(\y))}{\log r}\geq\frac{H(F\xi^{ss}|\xi^{ss})}{\ly{ss}}+\frac{h_{\nu}-H(F\xi^{ss}|\xi^{ss})}{\ly{s}}\text{ for $\widehat{\mu}$-a.e $\y$.}
\end{equation}
For simplicity, let $d_s:=\frac{H(F\xi^{ss}|\xi^{ss})}{\ly{ss}}$. By Proposition~\ref{pLY1}, the measure $\cm{\y}{\xi^{ss}}$ is exact dimensional and by Egorov's Theorem for every $\varepsilon>0$ there exists a set $J_1\subseteq\Lambda\times\Sigma^+$ with $\widehat{\mu}(J_1)>1-\varepsilon$ such that there exists a $M_1>0$ that for every $m\geq M_1$ and $\y\in J_1$
$$
\cm{\y}{\xi^{ss}}(B^{ss}_{2e^{-m}}(\y))\leq e^{m(-d_s+\varepsilon)}.
$$
By the definition of $B^s_r(\y)$ it is easy to see that $B^s_r(\y)\cap \xi^{ss}(\y)=B^{ss}_r(\y)$ and by the definition of conditional measures
\begin{equation}\label{etLY3}
\cm{\z}{\xi^{ss}}(B^{s}_{2e^{-m}}(\y))\leq e^{m(-d_s+\varepsilon)}\text{ for every $\y\in J_1$ and $\xi^{ss}(\underline{\mathbf{z}})=\xi^{ss}(\y)$.}
\end{equation}
The combination of Lebesgue density Theorem and Egorov's Theorem implies that there exists a set $J_2\subseteq J_1$ with $\widehat{\mu}(J_2)>1-2\varepsilon$ and $M_2>0$ such that for every $m\geq M_2$ and $\y\in J_2$
$$\mu(J_1\cap B^s_{e^{-m}}(\y))\geq\frac{1}{2}\mu(B^s_{e^{-m}}(\y)).$$
By \eqref{etLY3}, for every $\z\in B^t_{e^{-m}}(\y)$ such that there exists a $\z'\in\xi^{ss}(\z)\cap B^s_{e^{-m}}(\y)\cap J_1$
$$\cm{\z}{\xi^{ss}}(B^s_{e^{-m}}(\y)\cap J_1)\leq\cm{\z}{\xi^{ss}}(B^s_{2e^{-m}}(\z')\cap J_1)\leq e^{m(-d_s+\varepsilon)}.$$
If $\xi^{ss}(\z)\cap B^s_{e^{-m}}(\y)\cap J_1=\emptyset$ then the bound above is trivial. Hence, for every $\y\in J_2$
$$
\mu(B^s_{e^{-m}}(\y))\leq2\int_{B^t_{e^{-m}}(\y)}\cm{\z}{\xi^{ss}}(B^s_{e^{-m}}(\y)\cap J_1)d\widehat{\mu}(\z)\leq2e^{m(-d_s+\varepsilon)}\mu(B^t_{e^{-m}}(\y)).
$$
Since $\varepsilon>0$ was arbitrary, inequality \eqref{eLY3} follows by Proposition~\ref{pLY2}.
\end{proof}

\begin{prop}\label{pLY4}
For $\mu$-a.e $\xv\in\Lambda$
$$\limsup_{r\rightarrow0+}\frac{\log\mu(B_r(\xv))}{\log r}\leq\frac{H(F\xi^{ss}|\xi^{ss})}{\ly{ss}}+\frac{h_{\nu}-H(F\xi^{ss}|\xi^{ss})}{\ly{s}}.$$
\end{prop}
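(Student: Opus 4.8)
The plan is to establish Proposition~\ref{pLY4} as the mirror image of Proposition~\ref{pLY3}, using now the \emph{other} half of the exact dimensionality obtained in Propositions~\ref{pLY1} and~\ref{pLY2}. Set $d_s:=\frac{H(F\xi^{ss}|\xi^{ss})}{\ly{ss}}$ and $d_t:=\frac{h_{\nu}-H(F\xi^{ss}|\xi^{ss})}{\ly{s}}$. By \eqref{ecompballs} it is enough to prove that for $\widehat{\mu}$-a.e. $\y=(\xv,\ii)\in\Lambda\times\Sigma^{+}$ one has $\limsup_{r\to0+}\frac{\log\mu(B^s_r(\y))}{\log r}\le d_s+d_t$; equivalently, fixing $\varepsilon>0$, that there is a set of $\widehat{\mu}$-measure $>1-3\varepsilon$ on which $\mu(B^s_{e^{-m}}(\y))\ge e^{-m(d_s+d_t+3\varepsilon)}$ for all large integers $m$. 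The intermediate radii are handled by monotonicity of $r\mapsto\mu(B^s_r(\y))$, and $\varepsilon$ is removed through a countable intersection at the end.

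First I would fix $\varepsilon>0$ and invoke Egorov's Theorem together with Proposition~\ref{pLY1} to obtain a set $J_1\subseteq\Lambda\times\Sigma^{+}$ with $\widehat{\mu}(J_1)>1-\varepsilon$ and an $M_1$ such that $\cm{\z}{\xi^{ss}}(B^{ss}_s(\z))\ge s^{\,d_s+\varepsilon}$ whenever $\z\in J_1$ and $0<s\le e^{-M_1}$; likewise, from Proposition~\ref{pLY2} together with \eqref{ecomptransballs}, a set $J_2\subseteq J_1$ with $\widehat{\mu}(J_2)>1-2\varepsilon$ and an $M_2$ such that $\mu(B^t_s(\z))\ge s^{\,d_t+\varepsilon}$ whenever $\z\in J_2$ and $0<s\le e^{-M_2}$. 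Exactly as in Proposition~\ref{pLY3} I would then disintegrate $\mu$ along $\xi^{ss}$, writing $\mu(B^s_r(\y))=\int\cm{\z}{\xi^{ss}}(B^s_r(\y))\,d\widehat{\mu}(\z)$, and discard all leaves except those crossing a fixed central transversal window $B^t_{c_1 r}(\y)$ of $B^s_r(\y)$, with $0<c_1<1/2$. Since the angle between $e_s(\jj)$ and $e_{ss}(\kk)$ is uniformly bounded away from $0$ (Lemma~\ref{ldomsplit3b}), there are constants $c_1,c_2>0$ so that each such leaf meets $B^s_r(\y)$ in a segment $I_{\z}$ of length at least $c_2 r$, centred near the point $w$ where the leaf meets the transversal line through $\xv$.

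The crux of the argument — and the step I expect to be the main obstacle, corresponding to \cite[Section~(10.2)]{LY2} — is to show that, for a $\mu^T_{\ii}$-proportion at least $\tfrac12$ of the crossing leaves, the slice carries conditional mass $\cm{\z}{\xi^{ss}}(B^s_r(\y))\ge c_3\,r^{\,d_s+\varepsilon}$. This is \emph{not} a consequence of exact dimensionality of the conditional measures alone — and this is precisely the asymmetry with Proposition~\ref{pLY3}, where intersecting with $J_1$ made the needed \emph{upper} bound automatic on the remaining leaves: here $I_{\z}$ is centred at the \emph{prescribed} point $w$ rather than at a $\cm{\z}{\xi^{ss}}$-typical one, so one cannot just evaluate the lower bound defining $J_1$ at $w$. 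The way around this is a differentiation (Lebesgue-density) argument carried out on the one-dimensional leaf measures $\cm{\z}{\xi^{ss}}$: using the Besicovitch covering theorem as in Lemma~\ref{ltoLY2b}, together with a further application of Egorov's Theorem, one shows that for $\widehat{\mu}$-a.e. $\y$ and all sufficiently small $r$, all but a $\mu^T_{\ii}$-minority of the leaves crossing $B^t_{c_1 r}(\y)$ already contain a point $p\in J_1$ at distance at least $c_2 r/4$ from the two endpoints of $I_{\z}$; for such a leaf, $B^{ss}_{c_2 r/4}(p)\subseteq I_{\z}\subseteq B^s_r(\y)$, and the defining property of $J_1$ gives $\cm{\z}{\xi^{ss}}(B^s_r(\y))\ge\cm{p}{\xi^{ss}}(B^{ss}_{c_2 r/4}(p))\ge c_3\,r^{\,d_s+\varepsilon}$. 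The typicality of $\y$ enters here through the fact that $\y$ is a density point of $J_1$ in the appropriate rectangular sense; shrinking $J_2$ to a set $J_3\subseteq J_2$ with $\widehat{\mu}(J_3)>1-3\varepsilon$ makes the estimate uniform.

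Granting the claim, for $\y\in J_3$ and all large $m$ (taking $r=e^{-m}$) the surviving part of the disintegration integral is at least $c_3\,r^{\,d_s+\varepsilon}$ times the total transversal mass $\mu^T_{\ii}$ of the good crossing leaves, which is at least $\tfrac12\mu(B^t_{c_1 r}(\y))\ge\tfrac12(c_1 r)^{\,d_t+\varepsilon}$ because $\y\in J_2$. Hence $\mu(B^s_{e^{-m}}(\y))\ge c_4\,e^{-m(d_s+d_t+2\varepsilon)}$, so that $\limsup_{m\to\infty}\frac{\log\mu(B^s_{e^{-m}}(\y))}{-m}\le d_s+d_t+2\varepsilon$ on $J_3$. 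Filling in the radii $r\in(e^{-m-1},e^{-m}]$ by monotonicity of $r\mapsto\mu(B^s_r(\y))$, then letting $\varepsilon\to0$ along a sequence and intersecting the corresponding sets $J_3=J_3(\varepsilon)$, one obtains $\limsup_{r\to0+}\frac{\log\mu(B^s_r(\y))}{\log r}\le d_s+d_t$ for $\widehat{\mu}$-a.e. $\y$, which by \eqref{ecompballs} is the assertion of the proposition.
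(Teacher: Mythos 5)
Your reduction via \eqref{ecompballs}, the Egorov-type use of Propositions~\ref{pLY1} and~\ref{pLY2}, and the final bookkeeping are all fine, but the step you yourself flag as the crux does not close, and the obstruction is structural rather than technical. To show that a $\mu^T_{\ii}$-majority of the leaves crossing $B^t_{c_1r}(\y)$ contain a well-placed point of $J_1$, your density/Besicovitch argument must pass from the $\mu$-smallness of $B^s_r(\y)\setminus J_1$ (Lebesgue density of $\y$ in $J_1$) to the $\mu^T_{\ii}$-smallness of the set of bad leaves. But the relevant identity is
$$\mu\bigl(B^s_r(\y)\setminus J_1\bigr)\;\geq\;\int_{\{\text{bad leaves}\}}\cm{\z}{\xi^{ss}}\bigl(B^s_r(\y)\bigr)\,d\widehat{\mu}(\z),$$
so deducing that the bad leaves have small transversal mass requires a \emph{lower} bound on $\cm{\z}{\xi^{ss}}(B^s_r(\y))$ along the bad leaves --- which is precisely the quantity that is unavailable there: a leaf is bad exactly when you cannot bound its conditional mass in the window from below (the conditional measure $\cm{\z}{\xi^{ss}}$ is a probability measure on the whole leaf and may put almost no mass near where the leaf crosses $B^s_r(\y)$). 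The alternative route, $\mu^T_{\ii}(\pr{\ii}(J_1\cap B^s_r(\y)))\geq\mu(J_1\cap B^s_r(\y))\geq(1-\delta)\mu(B^s_r(\y))$, is circular: it bounds the good transversal mass by the very quantity $\mu(B^s_r(\y))$ you are trying to estimate, and this need not be comparable to $\mu(B^t_{c_1r}(\y))$ a priori. More fundamentally, for a general measure the inequality $\dim\mu\leq\dim(\text{projection})+\dim(\text{fibres})$ is simply false, so no soft differentiation argument that merely multiplies the two marginal lower bounds of Propositions~\ref{pLY1} and~\ref{pLY2} can suffice; this asymmetry with Proposition~\ref{pLY3} (where discarding bad leaves only helps an upper bound) is exactly why the lower bound on $\mu(B_r(\xv))$ is the hard half of the product-structure statement.

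The missing dynamical input is the Shannon--McMillan--Breiman theorem, which your proposal never invokes. The paper's proof works with the atoms of $\bigvee_{k=0}^{m-1}F^k\mathcal{P}$ at scale $e^{-m(\ly{s}-2\varepsilon)}$: each atom meeting $J_1$ carries $\widehat{\mu}$-mass at least $e^{-m(h_{\nu}+\varepsilon)}$ (SMB) and conditional leaf-mass at most $e^{-m(h_s-\varepsilon)}$ (the entropy computation from Proposition~\ref{pLY1}), so at least $\tfrac12 e^{-m(\ly{s}-2\varepsilon)(d_s+\varepsilon)}e^{m(h_s-\varepsilon)}$ distinct atoms meet the leafwise ball $L_m(\y)$, and all of them lie inside $B_{2e^{-m(\ly{s}-2\varepsilon)}}(\xv)$. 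Summing their guaranteed $\widehat{\mu}$-masses gives the lower bound on $\mu(B_{2e^{-m(\ly{s}-2\varepsilon)}}(\xv))$ directly, with the transversal contribution $(h_{\nu}-h_s)/\ly{s}$ emerging from the count rather than from Proposition~\ref{pLY2} (which is in fact not used in this proposition at all). To repair your argument you would have to replace the ``majority of good leaves'' claim by this counting mechanism, i.e.\ essentially adopt the paper's proof.
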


\begin{proof}
% and $\overline{d}:=\limsup_{r\rightarrow0+}\frac{\log\mu(B_r(\xv))}{\log r}$.
For simplicity, let $h_s:=H(F\xi^{ss}|\xi^{ss})=d_s\ly{ss}$. We remind the reader that $\widehat{\mu}=\mu\times\nu$. By applying Egorov's Theorem for Proposition~\ref{pLY1} and for the Shannon-McMillan-Breiman Theorem, we get that for every $\varepsilon>0$ there exists a set $J_1$ with $\widehat{\mu}(J_1)>1-\varepsilon$ and $M_1>0$ such that for every $\y=(\xv,\ii)\in J_1$ and every $m\geq M_1$
\begin{eqnarray}
&& \cm{\y}{\xi^{ss}}(B_{\frac{\kappa}{2}e^{-m(\ly{s}-2\varepsilon)}}(\xv)\times\left\{\ii\right\})\geq e^{-m(\ly{s}-2\varepsilon)(d_s+\varepsilon)},\label{etoLY41}\\
&& \cm{\y}{\xi^{ss}}\left(\left(\bigvee_{k=0}^{m-1}F^k\mathcal{P}\right)(\y)\right)\leq e^{-m(h_s-\varepsilon)},\label{etoLY42}\\
&& \left(\bigvee_{k=0}^{m-1}F^k\mathcal{P}\right)(\y)\subseteq B_{e^{-m(\ly{s}-2\varepsilon)}}(\xv)\times\Sigma^+,\label{etoLY43}\\
&& \widehat{\mu}\left(\left(\bigvee_{k=0}^{m-1}F^k\mathcal{P}\right)(\y)\right)\geq e^{-m(h_{\nu}+\varepsilon)}\label{etoLY44},
\end{eqnarray}
where $\kappa=\min_{i\neq j}\mathrm{dist}(f_i(\Lambda),f_j(\Lambda))>0$. Applying Lebesgue's density Theorem and Egorov's Theorem, there exists a set $J_2\subseteq J_1$ with $\widehat{\mu}(J_2)>1-2\varepsilon$ and $M_2\geq M_1$ such that for every $\y=(\xv,\ii)\in J_2$ and every $m\geq M_2$
\begin{equation}\label{etoLY46}
\dfrac{ \cm{\y}{\xi^{ss}}\left(\left(B_{\frac{\kappa}{2}e^{-m(\ly{s}-2\varepsilon)}}(\xv)\times\left\{\ii\right\}\right)\cap J_1\right)}{ \cm{\y}{\xi^{ss}}(B_{\frac{\kappa}{2}e^{-m(\ly{s}-2\varepsilon)}}(\xv)\times\left\{\ii\right\})}\geq\frac{1}{2}.
\end{equation}
For every $m\geq M_2$ we can define a finite set $\left\{\y_i\right\}_{i=1}^{N^m}$ such that for any $i\neq j$ $$\left(\bigvee_{k=0}^{m-1}F^k\mathcal{P}\right)(\y_i)\cap\left(\bigvee_{k=0}^{m-1}F^k\mathcal{P}\right)(\y_j)=\emptyset,$$
and $\y_i\in J_1$ whenever $J_1\cap\left(\bigvee_{k=0}^{m-1}F^k\mathcal{P}\right)(\y_i)\neq\emptyset$. For simplicity, we introduce the notation $L_m(\y):=B_{\frac{\kappa}{2}e^{-m(\ly{s}-2\varepsilon)}}(\y)\cap J_1$. By \eqref{etoLY41} and \eqref{etoLY46}, for any $\y\in J_2$
\begin{equation}\label{etoLY47}
\cm{\y}{\xi^{ss}}(L_m(\y))\geq\frac{1}{2}\cm{\y}{\xi^{ss}}(B_{\frac{\kappa}{2}e^{-m(\ly{s}-2\varepsilon)}}(\y))\geq\frac{1}{2}e^{-m(\ly{s}-2\varepsilon)(d_s+\varepsilon)}.
\end{equation}

Then by \eqref{etoLY42}
\begin{multline*}
\cm{\y}{\xi^{ss}}(L_m(\y))\leq\sum_{i:\y_i\in J_1}\cm{\y}{\xi^{ss}}\left(\left(\bigvee_{k=0}^{m-1}F^k\mathcal{P}\right)(\y_i)\cap L_m(\y)\right)\\
\leq \sharp\left\{\y_i\in J_1:L_m(\y)\cap\left(\bigvee_{k=0}^{m-1}F^k\mathcal{P}\right)(\y_i)\neq\emptyset\right\}\cdot\max_{i:\y_i\in J_1}\left\{\cm{\y}{\xi^{ss}}\left(\left(\bigvee_{k=0}^{m-1}F^k\mathcal{P}\right)(\y_i)\right)\right\}\\
\leq\sharp\left\{\y_i\in J_1:L_m(\y)\cap\left(\bigvee_{k=0}^{m-1}F^k\mathcal{P}\right)(\y_i)\neq\emptyset\right\}e^{-m(h_s-\varepsilon)}.
\end{multline*}
Hence,
\begin{equation}\label{etoLY45}
\cm{\y}{\xi^{ss}}(L_m(\y))e^{m(h_s-\varepsilon)}\leq\sharp\left\{\y_i\in J_1:L_m(\y)\cap\left(\bigvee_{k=0}^{m-1}F^k\mathcal{P}\right)(\y_i)\neq\emptyset\right\}.
\end{equation}
On the other hand, if $L_m(\y)\cap\left(\bigvee_{k=0}^{m-1}F^k\mathcal{P}\right)(\y_i)\neq\emptyset$ then by \eqref{etoLY43}
$$
\left(\bigvee_{k=0}^{m-1}F^k\mathcal{P}\right)(\y_i)\subseteq B_{2e^{-m(\ly{s}-2\varepsilon)}}(\xv)\times\Sigma^+.
$$
Therefore,
\begin{multline*}
\mu(B_{2e^{-m(\ly{s}-2\varepsilon)}}(\xv))=\widehat{\mu}(B_{2e^{-m(\ly{s}-2\varepsilon)}}(\xv)\times\Sigma^+)\\
\geq\sharp\left\{\y_i\in J_1:L_m(\y)\cap\left(\bigvee_{k=0}^{m-1}F^k\mathcal{P}\right)(\y_i)\neq\emptyset\right\}\min_{i:\y_i\in J_1}\left\{\widehat{\mu}\left(\left(\bigvee_{k=0}^{m-1}F^k\mathcal{P}\right)(\y_i)\right)\right\}.
\end{multline*}
By \eqref{etoLY44} and \eqref{etoLY45}, for any $\y\in J_2$
$$
\mu(B_{2e^{-m(\ly{s}-2\varepsilon)}}(\xv))\geq\cm{\y}{\xi^{ss}}(L_m(\y))e^{m(h_s-\varepsilon)}e^{-m(h_{\nu}+\varepsilon)}.
$$
Using \eqref{etoLY47}, for any $\y=(\xv,\ii)\in J_2$
$$
\mu(B_{2e^{-m(\ly{s}-2\varepsilon)}}(\xv))\geq\frac{1}{2}e^{-m(\ly{s}-2\varepsilon)(d_s+\varepsilon)}e^{m(h_s-\varepsilon)}e^{-m(h_{\nu}+\varepsilon)}.
$$
Hence, for any $\y\in J_2$
$$
\limsup_{m\rightarrow\infty}\frac{\log\mu(B_{2e^{-m(\ly{s}-2\varepsilon)}}(\xv))}{-m(\ly{s}-2\varepsilon)}\leq d_s+\varepsilon+\frac{h_{\nu}-h_s}{\ly{s}-2\varepsilon}+\varepsilon+\frac{2\varepsilon}{\ly{s}-2\varepsilon}
$$
Since $\varepsilon>0$ was arbitrary, the statement of the proposition follows.
\end{proof}

\begin{proof}[Proof of Theorem~\ref{tmain1}]
Proposition~\ref{pLY3} and Proposition~\ref{pLY4} together imply that $\mu$ is exact dimensional, moreover
$$
\dim_H\mu=\frac{H(F\xi^{ss}|\xi^{ss})}{\ly{ss}}+\frac{h_{\nu}-H(F\xi^{ss}|\xi^{ss})}{\ly{s}}.
$$
Simple algebraic manipulations show that
$$
\frac{H(F\xi^{ss}|\xi^{ss})}{\ly{ss}}+\frac{h_{\nu}-H(F\xi^{ss}|\xi^{ss})}{\ly{s}}=\frac{h_{\nu}}{\ly{ss}}+\left(1-\frac{\ly{s}}{\ly{ss}}\right)\frac{h_{\nu}-H(F\xi^{ss}|\xi^{ss})}{\ly{s}}.
$$
The proof can be finished by applying Proposition~\ref{pLY2}.
\end{proof}

\section{Applications}\label{sappl}

\subsection{Hueter-Lalley Theorem} This section is devoted to showing some applications of our main theorems. In the point of view of Theorem~\ref{tmain1b}, to prove that the Hausdorff dimension of a self-affine measure is equal to its Lyapunov dimension, one has to study the dimension of $\nu_{ss}$ defined in \eqref{edirections}. The measure $\nu_{ss}$ is basically a self-conformal measure associated to an IFS on the projective space. If the IFS on the projective space satisfies some separation condition then one may be able to calculate its dimension. Basically, Hueter and Lalley \cite{HL} used this phenomena to prove their theorem. Now we reprove their result.

\begin{theorem}\label{tHL}
Let $\Alpha=\left\{A_1, A_2,\dots, A_N\right\}$ be a finite set of contracting, non-singular $2\times2$ matrices, and let $\Phi=\left\{f_i(\xv)=A_i\xv+\tv_i\right\}_{i=1}^N$ be an iterated function system on the plane with affine mappings. Let $\nu$ be a left-shift invariant and ergodic Bernoulli-probability measure on $\Sigma^{+}$, and $\mu$ be the corresponding self-affine measure. Assume that
\begin{enumerate}
    \item\label{chl1} $\Alpha$ satisfies dominated splitting,
    \item\label{chl2} $\Alpha$ satisfies the \textnormal{backward non-overlapping condition}, i.e. there exists a backward invariant multicone $M$ such that $A^{-1}_i(M^{o})\subseteq M^{o}$ and $A^{-1}_i(M^{o})\cap A_j^{-1}(M^{o})=\emptyset$ for every $i\neq j$,
    \item\label{chl3} $\Alpha$ satisfies the \textnormal{$1$-bunched property}, i.e. for every $i=1,\dots,N$ $\alpha_1(A_i)^2\leq\alpha_2(A_i)$,
    \item\label{chl4} $\Phi$ satisfies the strong separation condition.
\end{enumerate}
Then
\begin{equation*}%\label{elyapdimHdim}
\dim_H\mu=\ldim\mu=\frac{h_{\nu}}{\ly{s}}\leq1.
\end{equation*}
\end{theorem}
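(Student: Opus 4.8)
The plan is to deduce the statement from Theorem~\ref{tmain1b}: the work is to verify its third hypothesis $\dim_H\nu_{ss}\ge\min\{1,\ldim\mu\}$ and to identify the value of $\ldim\mu$. The key inputs beyond dominated splitting and the SSC are that the backward non‑overlapping condition makes $\nu_{ss}$ the self‑conformal measure of a \emph{separated} conformal IFS on $\mathbf{P}^1$, and that the $1$‑bunched property controls the Lyapunov exponent of that IFS.

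\textbf{Step 1: $h_\nu\le\ly{s}$, hence $\ldim\mu=h_\nu/\ly{s}\le1$.} Write $g_i:=A_i^{-1}|_{\mathbf{P}^1}$. By Lemma~\ref{ldomsplit3b}, $e_{ss}(\ii)=g_{i_0}(e_{ss}(\sigma\ii))\in M$, and the backward non‑overlapping condition says precisely that the conformal IFS $\{g_i\}_{i=1}^N$ satisfies the SSC on $M$. Hence for each $n$ the sets $g_{i_0}\circ\cdots\circ g_{i_{n-1}}(M^o)$, over words $\iif=(i_0,\dots,i_{n-1})$, are pairwise disjoint subsets of $M$, so $\sum_{|\iif|=n}\diam\big(g_{i_0}\circ\cdots\circ g_{i_{n-1}}(M)\big)$ is bounded uniformly in $n$. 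Using the projective‑derivative identity $|g_A'(\ell)|=|\det A|/\|Av\|^2$ (for $\ell=\mathbb{R}v$, $\|v\|=1$), dominated splitting — which keeps $M$ uniformly separated from the directions $e_s$, giving bounded distortion of compositions on $M$ — and Lemma~\ref{ldomsplit2}, one obtains $\diam\big(g_{i_0}\circ\cdots\circ g_{i_{n-1}}(M)\big)\asymp\alpha_2(A^{(n)}(\ii))/\alpha_1(A^{(n)}(\ii))$; reindexing words by reversal then gives $\sup_n\sum_{|\iif|=n}\alpha_2(A_\iif)/\alpha_1(A_\iif)<\infty$. The $1$‑bunched property passes to products: $\alpha_1$ is submultiplicative and $\alpha_2$ supermultiplicative, so $\alpha_1(A_\iif)^2\le\prod_k\alpha_1(A_{i_k})^2\le\prod_k\alpha_2(A_{i_k})\le\alpha_2(A_\iif)$, whence $\alpha_2(A_\iif)/\alpha_1(A_\iif)\ge\alpha_1(A_\iif)$ and therefore $\sup_n\sum_{|\iif|=n}\alpha_1(A_\iif)=:C<\infty$. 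Applying the log‑sum (Jensen) inequality to the probability vector $(\nu([\iif]))_{|\iif|=n}$ with weights $\alpha_1(A_\iif)$ gives $nh_\nu+\int\log\alpha_1(A^{(n)}(\ii))\,d\nu(\ii)\le\log C$ (the integral equals $\sum_{|\iif|=n}\nu([\iif])\log\alpha_1(A_\iif)$ because $\nu$ is Bernoulli); dividing by $n$, letting $n\to\infty$, and using $\tfrac1n\int\log\alpha_1(A^{(n)}(\ii))\,d\nu\to-\ly{s}$ (a standard consequence of subadditivity, Lemma~\ref{ldomsplit2} and \eqref{elyapint}) yields $h_\nu-\ly{s}\le0$. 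Since $h_\nu\le\ly{s}\le\ly{ss}$ forces both $h_\nu/\ly{s}$ and $1+(h_\nu-\ly{s})/\ly{ss}$ to be $\ge h_\nu/\ly{s}$, we get $\ldim\mu=\min\{2,h_\nu/\ly{s},1+(h_\nu-\ly{s})/\ly{ss}\}=h_\nu/\ly{s}\le1$.

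\textbf{Step 2: $\dim_H\nu_{ss}\ge\min\{1,\ldim\mu\}$.} By $e_{ss}(\ii)=g_{i_0}(e_{ss}(\sigma\ii))$, the measure $\nu_{ss}=(e_{ss})_*\nu$ is the self‑conformal (stationary) measure on $M$ of the IFS $\{g_i\}_{i=1}^N$ with weights $(p_1,\dots,p_N)$; by dominated splitting this IFS is smooth, eventually contracting, and has bounded distortion near $\spt\nu_{ss}$, and it is separated by the backward non‑overlapping condition. The mass distribution principle then gives, for $\nu_{ss}$‑a.e.\ $\theta$, $\underline d_{\nu_{ss}}(\theta)\ge\lim_{n}\frac{\log\nu([\ii|_0^{n-1}])}{\log\diam\big(g_{i_0}\circ\cdots\circ g_{i_{n-1}}(M)\big)}=\frac{h_\nu}{\ly{ss}-\ly{s}}$, using Shannon--McMillan--Breiman, the distortion estimate of Step~1, and $\tfrac1n\log\alpha_j(A^{(n)}(\ii))\to-\ly{j}$ from Oseledec's theorem; hence $\dim_H\nu_{ss}\ge\min\{1,h_\nu/(\ly{ss}-\ly{s})\}$. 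The $1$‑bunched property also gives $\ly{ss}\le2\ly{s}$ (pass $\alpha_1(A_\iif)^2\le\alpha_2(A_\iif)$ to Lyapunov exponents), so $\ly{ss}-\ly{s}\le\ly{s}$ and $h_\nu/(\ly{ss}-\ly{s})\ge h_\nu/\ly{s}$; combining with $h_\nu\le\ly{s}$ from Step~1, $\dim_H\nu_{ss}\ge\min\{1,h_\nu/\ly{s}\}=h_\nu/\ly{s}=\min\{1,\ldim\mu\}$, which is the third hypothesis of Theorem~\ref{tmain1b}.

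\textbf{Conclusion and main obstacle.} Theorem~\ref{tmain1b} now yields $\dim_H\mu=\ldim\mu=\min\{h_\nu/\ly{s},\,1+(h_\nu-\ly{s})/\ly{ss}\}$, and since $h_\nu\le\ly{s}$ this minimum equals $h_\nu/\ly{s}\le1$. The hard part is Step~2 together with the distortion estimate feeding it from Step~1: one must identify $\nu_{ss}$ rigorously as the stationary measure of a separated, uniformly‑eventually‑contracting conformal IFS on $\mathbf{P}^1$, which requires the projective‑derivative formula, the bounded‑distortion bounds on $M$ (resting on dominated splitting and the uniform transversality of $e_s$ and $e_{ss}$ noted after Lemma~\ref{ldomsplit3b}), the identification of composition contraction rates with $\alpha_2(A_\iif)/\alpha_1(A_\iif)$ via Lemma~\ref{ldomsplit2}, and then the lower‑bound half of the classical dimension formula for self‑conformal measures under the SSC. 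The $1$‑bunched hypothesis enters twice: to convert the separation bound $\sum_{|\iif|=n}\alpha_2(A_\iif)/\alpha_1(A_\iif)\lesssim1$ into $\sum_{|\iif|=n}\alpha_1(A_\iif)\lesssim1$, and as $\ly{ss}\le2\ly{s}$ to compare $h_\nu/(\ly{ss}-\ly{s})$ with $h_\nu/\ly{s}$.
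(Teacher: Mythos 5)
Your proof is correct and follows essentially the same route as the paper: reduce to Theorem~\ref{tmain1b} by recognising $\nu_{ss}$ as the stationary measure of the separated conformal IFS $\{A_i^{-1}\}$ acting on the backward-invariant multicone, estimate its dimension via the projective contraction ratio $\alpha_2(A_{\iif})/\alpha_1(A_{\iif})$, and use the $1$-bunched property to compare $\ly{ss}-\ly{s}$ with $\ly{s}$. The one place you diverge is in how $h_\nu\le\ly{s}$ (hence $\ldim\mu=h_\nu/\ly{s}\le1$) is obtained: the paper's Lemma~\ref{ltoHL} computes $\dim_H\nu_{ss}=h_\nu/(\ly{ss}-\ly{s})$ \emph{exactly} (upper and lower bound), so $h_\nu\le\ly{ss}-\ly{s}\le\ly{s}$ falls out of the trivial bound $\dim_H\nu_{ss}\le1$; since you only prove the lower bound on $\dim_H\nu_{ss}$, you supply instead a separate counting argument — disjointness of the level-$n$ projective images gives $\sum_{|\iif|=n}\alpha_2(A_{\iif})/\alpha_1(A_{\iif})\lesssim1$, the $1$-bunched property (which you correctly pass to products via sub/supermultiplicativity of $\alpha_1,\alpha_2$) converts this to $\sum_{|\iif|=n}\alpha_1(A_{\iif})\lesssim1$, and the log-sum inequality yields $h_\nu\le\ly{s}$. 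Both derivations are valid; the paper's is shorter once Lemma~\ref{ltoHL} is in hand, while yours avoids proving the upper bound for $\dim_H\nu_{ss}$ at the cost of the pressure-type estimate. One cosmetic point: since a multicone meets $\mathbf{P}^1$ in finitely many arcs, the disjointness bound is cleanest phrased with total arc length rather than diameter, but this does not affect the argument.
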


The proof of the theorem uses the following lemma.

\begin{lemma}\label{ltoHL}
Let $\Alpha=\left\{A_1, A_2,\dots, A_N\right\}$ be a finite set of contracting, non-singular $2\times2$ matrices and let $\nu$ be a left-shift invariant and ergodic Bernoulli-probability measure on $\Sigma^{+}$. Assume that
\begin{enumerate}
    \item $\Alpha$ satisfies dominated splitting,
    \item $\Alpha$ satisfies the backward non-overlapping condition.
\end{enumerate}
Let $e_{ss}:\Sigma^+\mapsto\mathbf{P}^1$ be the projection defined in Lemma~\ref{ldomsplit3b}. Then
$$
\dim_H\nu_{ss}=\dim_H\nu\circ e_{ss}^{-1}=\frac{h_{\nu}}{\ly{ss}-\ly{s}},
$$
where $\ly{ss}$ and $\ly{s}$ are the Lyapunov exponents defined in Lemma~\ref{ldomsplit2}.
\end{lemma}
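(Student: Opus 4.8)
The plan is to recognise $\nu_{ss}$ as a self-conformal measure on $\mathbf{P}^1$ and to compute its dimension through the classical formula $h/\lambda$ for self-conformal measures satisfying the strong separation condition. For $i=1,\dots,N$ let $\bar g_i\colon\mathbf{P}^1\to\mathbf{P}^1$ be the projective action of $A_i^{-1}$. By Lemma~\ref{ldomsplit1} we have $A_{i_0}e_{ss}(\ii)=e_{ss}(\sigma\ii)$ for every $\ii$, hence $e_{ss}(\ii)=\bar g_{i_0}(e_{ss}(\sigma\ii))$; thus $e_{ss}\colon\Sigma^+\to\mathbf{P}^1$ is precisely the coding map of the IFS $\left\{\bar g_i\right\}_{i=1}^N$, and $\nu_{ss}=(e_{ss})_*\nu$ satisfies $\nu_{ss}=\sum_{i=1}^Np_i(\bar g_i)_*\nu_{ss}$, i.e. it is the associated self-conformal (stationary) measure.

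Writing $\mathcal{M}=M\cap\mathbf{P}^1$ for the finite union of open arcs corresponding to the backward-invariant multicone $M$ of hypothesis~(2), the assumptions $A_i^{-1}(M^o)\subseteq M^o$ and $A_i^{-1}(M^o)\cap A_j^{-1}(M^o)=\emptyset$ say exactly that $\bar g_i(\mathcal{M})\subseteq\mathcal{M}$ and $\bar g_i(\mathcal{M})\cap\bar g_j(\mathcal{M})=\emptyset$ for $i\neq j$, while by Lemma~\ref{ldomsplit3b} every $e_{ss}(\ii)$ lies in $\overline{\mathcal M}$. The $\bar g_i$ are Möbius maps, hence real-analytic with uniformly bounded distortion on $\overline{\mathcal M}$, and using the projective-derivative identity $|\bar A'([v])|=|\det A|\,\|v\|^2/\|Av\|^2$ together with Lemma~\ref{ldomsplit2} one sees that the composition $\bar g_{i_0}\circ\cdots\circ\bar g_{i_{n-1}}$ has derivative comparable to $\alpha_2(A^{(n)}(\ii))/\alpha_1(A^{(n)}(\ii))$ on $\overline{\mathcal M}$, which by dominated splitting is $\lesssim e^{-\delta n}$. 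Passing, if necessary, to the $n$-th iterated system $\bigl\{\bar g_{\ii|_0^{n-1}}\bigr\}$ for $n$ large — which changes neither $\nu_{ss}$ nor the ratio $h_\nu/\lambda$ below — we may assume $\left\{\bar g_i\right\}$ is a genuinely contracting conformal IFS satisfying the strong separation condition. For such a measure the standard covering argument (cylinders $e_{ss}([\ii|_0^{n-1}])$ have diameter comparable, by bounded distortion, to $|(\bar g_{\ii|_0^{n-1}})'|$ and are pairwise disjoint; Shannon–McMillan–Breiman and Birkhoff then give the local dimension) yields that $\nu_{ss}$ is exact dimensional with
\[
\dim_H\nu_{ss}=\frac{h_\nu}{\lambda},\qquad \lambda:=-\int\log\bigl|\bar g_{i_0}'(e_{ss}(\sigma\ii))\bigr|\,d\nu(\ii).
\]

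It then remains to identify $\lambda$. Applying the derivative identity with $A=A_{i_0}^{-1}$ and a unit vector in $e_{ss}(\sigma\ii)$, and using that $A_{i_0}$ maps the line $e_{ss}(\ii)$ onto $e_{ss}(\sigma\ii)$, so that $\|A_{i_0}^{-1}|e_{ss}(\sigma\ii)\|=\|A(\ii)|e_{ss}(\ii)\|^{-1}$, we get
\[
\bigl|\bar g_{i_0}'(e_{ss}(\sigma\ii))\bigr|=\frac{\|A(\ii)|e_{ss}(\ii)\|^{2}}{|\det A_{i_0}|}.
\]
Hence $\lambda=-2\int\log\|A(\ii)|e_{ss}(\ii)\|\,d\widehat{\nu}(\ii)+\sum_{i=1}^N p_i\log|\det A_i|$. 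By \eqref{elyapint} the first integral equals $-\ly{ss}$, and since $\tfrac1n\log|\det(A_{i_0}\cdots A_{i_{n-1}})|=\tfrac1n\log\bigl(\alpha_1(A_{i_0}\cdots A_{i_{n-1}})\alpha_2(A_{i_0}\cdots A_{i_{n-1}})\bigr)\to-\ly{s}-\ly{ss}$ we have $\sum_{i}p_i\log|\det A_i|=-\ly{s}-\ly{ss}$. Therefore $\lambda=2\ly{ss}-\ly{s}-\ly{ss}=\ly{ss}-\ly{s}$, which is positive by the dominated splitting bound $\ly{s}+\delta\le\ly{ss}$, and substituting gives $\dim_H\nu_{ss}=h_\nu/(\ly{ss}-\ly{s})$.

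I expect the main obstacle to be the middle step: carefully verifying that $\left\{\bar g_i\right\}$ is (eventually) contracting on the multicone and that reducing to an iterate is harmless, so that the classical exact-dimensionality and dimension formula for self-conformal measures under the strong separation condition genuinely apply in this projective setting; once that is in place, the derivative computation pinning $\lambda$ down to $\ly{ss}-\ly{s}$ is a short calculation.
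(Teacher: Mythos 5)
Your proposal is correct and follows essentially the same route as the paper: both identify $\nu_{ss}$ as the stationary measure of the IFS on $\mathbf{P}^1$ induced by the $A_i^{-1}$, use the backward non-overlapping condition to get disjoint cylinder images, and show the contraction rate of $n$-fold compositions is $\ly{ss}-\ly{s}$ (the paper via the cross-product estimate for the projective distance, you via the equivalent projective-derivative identity $|\det A|\,\|v\|^2/\|Av\|^2$ and the determinant--Lyapunov-exponent relation). The paper likewise concludes by comparing $\frac1n\log\nu([i_1,\dots,i_n])\to-h_\nu$ with $\frac1n\log\mathrm{diam}$ of the cylinder images, so your appeal to the standard $h/\lambda$ formula is just that computation packaged.
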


\begin{proof}
The projective space $\mathbf{P}^1$ is equivalent to the upper half unit sphere in $\R^2$. We define an iterated function system on $\mathbf{P}^1$ by $\Alpha$ in the natural way, i.e.
$$
\widetilde{A}_i:\theta\in\mathbf{P}^1\mapsto\sgn((A_i^{-1}\theta)_2)\frac{A_i^{-1}\theta}{\|A_i^{-1}\theta\|},
$$ where $\sgn((A_i^{-1}\theta)_2)$ denotes the signum of the second coordinate of the vector $A_i^{-1}\theta$. By \cite[Lemma~3.2]{BR}, the IFS $\widetilde{\Alpha}=\left\{\widetilde{A}_1,\dots,\widetilde{A}_N\right\}$ is uniformly contracting on $M$, where $M$ is the backward invariant multicone with non-overlapping condition. Hence, the measure $\nu_{ss}$ is the invariant measure associated to the IFS $\widetilde{\Alpha}$, and
\[
\dim_H\nu_{ss}=\lim_{r\rightarrow0+}\frac{\log\nu_{ss}(B_r(\theta))}{\log r}\text{ for $\nu_{ss}$-a.e $\theta$,}
\]
where $B_r(\theta)$ denotes the ball with radius $r$ centered at $\theta$ according to the spherical distance. Since $\Alpha$ satisfies the backward non-overlapping condition
\[
\dim_H\nu_{ss}=\lim_{n\rightarrow\infty}\frac{\log\nu_{ss}(\widetilde{A}_{i_1}\circ\cdots\widetilde{A}_{i_n}(M))}{\log \diam(\widetilde{A}_{i_1}\circ\cdots\widetilde{A}_{i_n}(M))}\text{ for $\nu$-a.e $\ii$,}
\]
where $\diam(.)$ denotes the diameter of a set according to the spherical distance. It is easy to see that for any $\theta_1,\theta_2\in\mathbf{P}^1$, and any $\underline{0}\neq\underline{v}\in\theta_1$, $\underline{0}\neq\underline{w}\in\theta_2$
$$
\frac{\|\underline{v}\times\underline{w}\|}{\|\underline{v}\|\|\underline{w}\|}\leq d(\theta_1,\theta_2)\leq\frac{2\|\underline{v}\times\underline{w}\|}{\|\underline{v}\|\|\underline{w}\|},
$$
where $\underline{v}\times\underline{w}$ denotes the standard vector product.
Thus,
\[
\frac{\det(A_{\underline{i}}^{-1})}{\|A_{\underline{i}}^{-1}|\theta_1\|\|A_{\underline{i}}^{-1}|\theta_2\|}\frac{d(\theta_1,\theta_2)}{2}\leq d(\widetilde{A}_{\underline{i}}(\theta_1),\widetilde{A}_{\underline{i}}(\theta_2))\leq \frac{2\det(A_{\underline{i}}^{-1})}{\|A_{\underline{i}}^{-1}|\theta_1\|\|A_{\underline{i}}^{-1}|\theta_2\|}
\]
for any $\underline{i}\in\Sigma^*$. Since every $\theta\in M$ is uniformly separated away from the stable directions, we get
$$
\lim_{n\rightarrow\infty}\frac{1}{n}\log\diam(\widetilde{A}_{i_1}\circ\cdots\widetilde{A}_{i_n}(M))=\ly{s}-\ly{ss}\text{ for $\nu$-a.e $\ii$.}
$$
On the other hand $$\lim_{n\rightarrow\infty}\frac{1}{n}\log\nu_{ss}(\widetilde{A}_{i_1}\circ\cdots\widetilde{A}_{i_n}(M))=\lim_{n\rightarrow\infty}\frac{1}{n}\log\nu([i_1,\dots,i_n])=-h_{\nu}\text{ for $\nu$-a.e $\ii$.}$$
The statement follows by taking the ratio of the previous two limits.
\end{proof}

Now, we show a modification of Marstrand's projection theorem~\cite{M}. Kaufman~\cite{K} showed an upper bound on the exceptional set of directions, where the dimension drops. We use in the next lemma the method of Kaufman~\cite{K}, however, for later usage we need a better lower bound on the dimension of projected measure, therefore for the comfortability of the reader, we prove it here.

\begin{lemma}\label{lmarstrand}
	Let $m$ be a probability measure on $\R^2$ and let $\lambda$ be a measure on $[0,\pi)$. For a $\theta\in[0,\pi)$ denote by $\proj_{\theta}$ the orthogonal projection onto the line perpendicular to the vector $(\sin\theta,\cos\theta)$. Then for every $\varepsilon>0$ there exists a set $A_{\varepsilon}\subseteq[0,\pi)$ such that $\lambda(A_{\varepsilon})>0$ and
	\begin{equation}\label{eprojdim}
	\dim_H(\proj_{\theta})_*m\geq\min\left\{1,\dim_Hm,\dim_H\lambda\right\}-\varepsilon\text{ for every $\theta\in A_{\varepsilon}$.}
	\end{equation}
\end{lemma}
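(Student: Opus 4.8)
The plan is to use the standard potential-theoretic characterisation of Hausdorff dimension of measures together with a Fubini argument in the variable $\theta$, following Kaufman. Fix $s<\min\{1,\dim_Hm,\dim_H\lambda\}$ and let $\varepsilon>0$. Since $s<\dim_Hm$, there is a Borel set $E\subseteq\R^2$ with $m(E)$ close to $1$ on which the $s$-energy is finite; passing to the restriction $m|_E$ we may assume $\iint\|\xv-\yv\|^{-s}\,dm(\xv)\,dm(\yv)<\infty$, and similarly, since $s<\dim_H\lambda$, after restricting $\lambda$ to a suitable subset we may assume $\iint|t-u|^{-s}\,d\lambda(t)\,d\lambda(u)<\infty$. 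The key quantity is
\[
I:=\int_0^\pi\iint\frac{1}{|\proj_\theta\xv-\proj_\theta\yv|^{s}}\,dm(\xv)\,dm(\yv)\,d\lambda(\theta).
\]
By Fubini, $I=\iint\left(\int_0^\pi|\langle\xv-\yv,(\cos\theta,-\sin\theta)\rangle|^{-s}\,d\lambda(\theta)\right)dm(\xv)\,dm(\yv)$, where I have written the projection onto the line perpendicular to $(\sin\theta,\cos\theta)$ as an inner product against the unit direction $(\cos\theta,-\sin\theta)$ of that line. For fixed $\xv\neq\yv$, write $\xv-\yv=\|\xv-\yv\|(\sin\phi,\cos\phi)$ for the appropriate angle $\phi$; then $|\langle\xv-\yv,(\cos\theta,-\sin\theta)\rangle|=\|\xv-\yv\|\,|\sin(\theta-\phi)|$ up to choosing representatives, so the inner integral equals $\|\xv-\yv\|^{-s}\int_0^\pi|\sin(\theta-\phi)|^{-s}\,d\lambda(\theta)$.

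The main point is then to bound $\int_0^\pi|\sin(\theta-\phi)|^{-s}\,d\lambda(\theta)$ uniformly in $\phi$ by something controlled by the $s$-energy of $\lambda$. Because $|\sin(\theta-\phi)|\geq c\,\mathrm{dist}(\theta-\phi,\pi\mathbb Z)$ and $s<1$, one has $|\sin(\theta-\phi)|^{-s}\lesssim |\theta-\phi|^{-s}+|\theta-\phi\pm\pi|^{-s}$ on $[0,\pi)$, and integrating against $\lambda$ gives $\int_0^\pi|\sin(\theta-\phi)|^{-s}\,d\lambda(\theta)\lesssim \sup_{u}\int|t-u|^{-s}\,d\lambda(t)$. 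This last supremum is finite for $\lambda$-a.e.\ $u$ by the finiteness of the double $s$-energy of $\lambda$, and after a further restriction of $\lambda$ to a positive-measure set we may assume it is bounded by a constant $K$ uniformly. Hence $I\leq K\iint\|\xv-\yv\|^{-s}\,dm(\xv)\,dm(\yv)<\infty$, so the integrand $\theta\mapsto\iint|\proj_\theta\xv-\proj_\theta\yv|^{-s}\,dm\,dm$ is finite for $\lambda$-a.e.\ $\theta$. By the energy criterion, $\dim_H(\proj_\theta)_*m\geq s$ for $\lambda$-a.e.\ $\theta$ in the restricted set, which still has positive $\lambda$-measure; letting $s\uparrow\min\{1,\dim_Hm,\dim_H\lambda\}$ along a sequence and taking a countable intersection of the corresponding full-measure sets yields a set $A_\varepsilon$ with $\lambda(A_\varepsilon)>0$ on which \eqref{eprojdim} holds.

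The step I expect to require the most care is the uniform control of the singular integral $\int_0^\pi|\sin(\theta-\phi)|^{-s}\,d\lambda(\theta)$: one needs $s<1$ for integrability near the zeros of $\sin$, and one must handle the periodic nature of the projection (the two zeros $\theta=\phi$ and $\theta=\phi\pm\pi$ inside a period-$\pi$ window) as well as the fact that the bound by the $s$-energy of $\lambda$ holds only $\lambda$-a.e.\ rather than everywhere, which is precisely why the conclusion is about a positive-measure set $A_\varepsilon$ rather than a full-measure one. The restrictions of $m$ and $\lambda$ to subsets of almost-full measure do not affect the dimensions (for $m$ the local dimension is unchanged off a null set, and for $\lambda$ we only need a positive-measure witness set), so no generality is lost.
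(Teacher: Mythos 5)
Your overall strategy is the same as the paper's --- Kaufman's method: restrict $m$ and $\lambda$ to positive-measure pieces with good regularity, integrate the $s$-energy of the projected measure against $\lambda$, apply Fubini, and reduce everything to a uniform bound on the angular integral $\int|\sin(\theta-\phi)|^{-s}\,d\lambda'(\theta)$. The gap is in how you obtain that uniform bound. The angle $\phi$ is the direction of $\xv-\yv$ for $\xv,\yv$ in the support of $m$, so it ranges over a set that has nothing to do with $\lambda$; you therefore need the potential $u\mapsto\int|t-u|^{-s}\,d\lambda'(t)$ to be bounded at \emph{every} $u\in[0,\pi)$, not merely at $\lambda$-a.e.\ $u$ or at $u$ in the restricted set. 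Finiteness of the double $s$-energy of $\lambda$ only gives finiteness of the potential at $\lambda$-a.e.\ point, and restricting $\lambda$ to a set $B$ on which the potential is at most $K$ only controls the potential of $\lambda|_{B}$ at points of $B$; it does not by itself control it at an arbitrary $\phi$ determined by $m$. (Your closing remark that the a.e.\ nature of this bound is ``precisely why'' the conclusion concerns only a positive-measure set misdiagnoses the issue: the positive-measure conclusion comes from the restriction of $\lambda$, while the bound at $\phi$ must be unconditional.)

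The step can be repaired in two standard ways, and the paper takes the second. Either invoke the maximum principle for Riesz potentials (valid for $0<s<1$ on $\R$): if the potential of $\lambda|_{B}$ is at most $K$ on the support of $\lambda|_{B}$, then it is at most $2^{s}K$ everywhere. Or, as the paper does, extract from $\dim_H\lambda\geq s$ via Egorov a restriction $\lambda'$ satisfying a \emph{global} Frostman bound $\lambda'(I)\leq c'|I|^{s-\varepsilon/2}$ for every interval $I$, and evaluate the energy at the strictly smaller exponent $s-\varepsilon$: decomposing $\left\{\theta:|\sin(\theta-\phi)|\leq2^{-n}\right\}$ into at most two intervals of length comparable to $2^{-n}$ gives, for every $\phi$, a convergent geometric series $\sum_{n}2^{n(s-\varepsilon)}2^{-n(s-\varepsilon/2)}$. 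Note that with your single exponent $s$ appearing both in the energy and in the regularity of $\lambda$, the corresponding dyadic sum diverges logarithmically, so the $\varepsilon/2$ gap (or the maximum principle) is genuinely needed. A minor further point: the final limiting step, letting $s$ increase to the minimum along a sequence and intersecting the exceptional sets, is both unnecessary and slightly off, since the good sets depend on $s$ through the restriction of $\lambda$ and have only positive, not full, $\lambda$-measure; for the stated lemma a single exponent slightly above $\min\left\{1,\dim_Hm,\dim_H\lambda\right\}-\varepsilon$ suffices.
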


\begin{proof}
	Let us denote $\min\left\{1,\dim_Hm,\dim_H\lambda\right\}$ by $s$. Since $\dim_H\lambda\geq s$ then using \eqref{elocdimHdim} and Egorov's Theorem for every $\varepsilon>0$ there exists a set $A_{\varepsilon}$ and $C>0$ such that $\lambda(A_{\varepsilon})>0$ and for every $\theta\in A_{\varepsilon}$ and $r>0$
	$$
	\lambda((\theta-r,\theta+r))\leq Cr^{s-\varepsilon/2}.
	$$ Moreover, without loss of generality we may assume that $A_{\varepsilon}$ is bounded away from $0$ and $\pi$, i.e. there exists a constant $c>0$ s.t. $\mathrm{dist}(\theta,\left\{0,\pi\right\})>c$ for every $\theta\in A_{\varepsilon}$. Let $\lambda':=\left.\lambda\right|_{A_{\varepsilon}}/\lambda(A_{\varepsilon})$ be the restricted and normalized measure. It is easy to see that there exists a constant $c'>0$ such that for any interval $I\subseteq[0,\pi)$ $$\lambda'(I)\leq c'|I|^{s-\varepsilon/2}.$$
	We prove that for almost every point w.r.t $\lambda'$ \eqref{eprojdim} holds.
	
	On the other hand, by \eqref{elocdimHdim} for every $\varepsilon>0$ the exists a set $\Omega$ such that $m(\Omega)>0$ and $\underline{d}_{m}(\xv)>\dim_Hm-\varepsilon/4$. By Egorov's Theorem, there exists a set $\Omega'\subset\Omega$ and $R>0$ such that $m(\Omega')>0$ and $m(B_r(\xv))\leq r^{\dim_Hm-\varepsilon/2}$ for every $\xv\in\Omega'$ and $r<R$. Let $\widetilde{m}=\left.m\right|_{\Omega'}$. Thus, simple calculations show that
	\begin{equation}\label{ekell}
	\iint\frac{1}{\|\xv-\yv\|^{\dim_Hm-\varepsilon}}dm(\xv)d\widetilde{m}(\yv)\leq\int\sum_{n=0}^{\infty}2^{(\dim_Hm-\varepsilon)(n+1)}m(B_{2^{-n}}(\yv))d\widetilde{m}(\yv)<\infty.
	\end{equation}
	
	For simplicity we denote $(\proj_{\theta})_*m$ by $m_{\theta}$ and $(\proj_{\theta})_*\widetilde{m}$ by $\widetilde{m}_{\theta}$. Now we show that 
	\[
	\mathcal{I}:=\iiint\frac{1}{|x-y|^{s-\varepsilon}}dm_{\theta}(x)d\widetilde{m}_{\theta}(y)d\lambda'(\theta)<+\infty.
	\]
	
	Applying Fubini's Theorem we have
	\begin{multline*}
	\mathcal{I}=\iiint\frac{1}{|\proj_{\theta}(\xv)-\proj_{\theta}(\yv)|^{s-\varepsilon}}d\lambda'(\theta)dm(\xv)d\widetilde{m}(\yv)=\\
	\iint\frac{1}{\|\xv-\yv\|^{s-\varepsilon}}\int\dfrac{1}{\left(\frac{|\proj_{\theta}(\xv)-\proj_{\theta}(\yv)|}{\|\xv-\yv\|}\right)^{s-\varepsilon}}d\lambda'(\theta)dm(\xv)d\widetilde{m}(\yv).
	\end{multline*}
	Applying some algebraic manipulation we have for every $\xv\neq\yv\in\R^2$
	\[
	\int\dfrac{1}{\left(\frac{|\proj_{\theta}(\xv)-\proj_{\theta}(\yv)|}{\|\xv-\yv\|}\right)^{s-\varepsilon}}d\lambda'(\theta)\leq
	2^{s-\varepsilon}\sum_{n=0}^{\infty}2^{n(s-\varepsilon)}\lambda'\left(\left\{\theta:\frac{|\proj_{\theta}(\xv)-\proj_{\theta}(\yv)|}{\|\xv-\yv\|}\leq\frac{1}{2^n}\right\}\right).
	\]
	Since the set $\left\{\theta:\frac{|\proj_{\theta}(\xv)-\proj_{\theta}(\yv)|}{\|\xv-\yv\|}\leq\frac{1}{2^n}\right\}$ is contained in at most two intervals with length $c''/2^{n}$ we get
	\[
	\int\dfrac{1}{\left(\frac{|\proj_{\theta}(\xv)-\proj_{\theta}(\yv)|}{\|\xv-\yv\|}\right)^{s-\varepsilon}}d\lambda'(\theta)\leq2^{s-\varepsilon}\sum_{n=0}^{\infty}2^{n(s-\varepsilon)}\left(\frac{c''}{2^n}\right)^{s-\varepsilon/2}<+\infty
	\]
	Hence, by \eqref{ekell}
	$$
	\mathcal{I}\leq C\iint\frac{1}{\|\xv-\yv\|^{\dim_Hm-\varepsilon}}dm(\xv)d\widetilde{m}(\yv)<\infty.
	$$
	Therefore, $\iint\frac{1}{|x-y|^{s-\varepsilon}}d\widetilde{m}_{\theta}(x)d\widetilde{m}_{\theta}(y)<\infty$ for $\lambda'$-a.e. $\theta$. By Frostman's Lemma, $\dim_H\widetilde{m}_{\theta}\geq s-\varepsilon$ for $\lambda'$-a.e. $\theta$. But $\widetilde{m}_{\theta}\ll m_{\theta}$, thus $$
	\dim_Hm_{\theta}=\inf\left\{\dim_HA:m_{\theta}(A^c)=0\right\}\geq\inf\left\{\dim_HA:\widetilde{m}_{\theta}(A^c)=0\right\}=\dim_H\widetilde{m}_{\theta}\geq s-\varepsilon$$ for $\lambda'$-a.e. $\theta$, which had to be proven.
\end{proof}

\begin{proof}[Proof of Theorem~\ref{tHL}]
By $1$-bunched property, $\ly{ss}\leq2\ly{s}$. Hence, by using Lemma~\ref{ltoHL}
$$
1\geq\dim_H\nu_{ss}=\frac{h_{\nu}}{\ly{ss}-\ly{s}}\geq\frac{h_{\nu}}{\ly{s}}=\ldim\mu\geq\dim_H\mu.
$$
Thus, applying Theorem~\ref{tmain1b} we have that $\ldim\mu=\dim_H\mu$.
\end{proof}

\begin{cor}[Hueter-Lalley,\cite{HL}]
Let $\Alpha=\left\{A_1, A_2,\dots, A_N\right\}$ be a finite set of contracting, non-singular $2\times2$ matrices, and let $\Phi=\left\{f_i(\xv)=A_i\xv+\tv_i\right\}_{i=1}^N$ be an iterated function system on the plane with affine mappings and denote by $\Lambda$ the attractor of the IFS~$\Phi$. With the assumptions (\ref{chl1})-(\ref{chl4}) of Theorem~\ref{tHL}
\[\dim_H\Lambda=\dim_B\Lambda=s\leq1,
\]
where $s$ is the unique root of the pressure function $P(s)$, defined in \eqref{esap}.
\end{cor}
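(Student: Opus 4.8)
The plan is to sandwich $\dim_H\Lambda$ between $s$ from above and $s$ from below. For the upper bound there is nothing new: $s$ is the root of the subadditive pressure $P$, and Falconer's theorem quoted in the introduction (\cite{F}) says that this root is always an upper bound for $\overline{\dim}_B\Lambda$, so $\dim_H\Lambda\le\underline{\dim}_B\Lambda\le\overline{\dim}_B\Lambda\le s$ irrespective of the separation or bunching hypotheses. It thus remains to prove $\dim_H\Lambda\ge s$ and $s\le 1$.

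For the lower bound I would use the variational principle for $P$. In the range $s\le 1$ the singular value function is $\phi^{t}(A)=\|A\|^{t}=\alpha_1(A)^{t}$, so $P(t)=\sup_{\mu}\{h_{\mu}-t\,\ly{s}(\mu)\}$, the supremum over $\sigma$-invariant $\mu$, where $\ly{s}(\mu)=-\lim_{n}\tfrac1n\int\log\alpha_1(A^{(n)})\,d\widehat{\mu}$; since $P(s)=0$ and $\mu\mapsto h_{\mu}-s\,\ly{s}(\mu)$ is affine in $\mu$ (metric entropy is affine, and by Lemma~\ref{ldomsplit2} $\ly{s}(\mu)=-\int\log\|A(\ii)|e_s(\ii)\|\,d\widehat{\mu}$ is linear), there is for every $\varepsilon>0$ an \emph{ergodic} $\widetilde\nu$ with $h_{\widetilde\nu}/\ly{s}(\widetilde\nu)>s-\varepsilon$ (note $\ly{s}$ is uniformly bounded away from $0$ and $\infty$ over invariant measures, since $\prod_j\alpha_2(A_{i_j})\le\alpha_1(A^{(n)})\le\prod_j\|A_{i_j}\|$). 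As $\widetilde\nu$ need not be Bernoulli, I would pass to iterates: for $n\ge 1$ let $\Phi^{n}=\{f_{\underline i}\}_{|\underline i|=n}$ be the $n$-fold iterate IFS, with alphabet $\mathcal S^{n}$ and the same attractor $\Lambda$, and let $\nu_{n}$ be the Bernoulli measure on $(\mathcal S^{n})^{\mathbb N}$ with weights $\widetilde\nu([\underline i])$, $|\underline i|=n$ (the $n$-block Bernoullisation of $\widetilde\nu$). Because $\ii\mapsto\log\|A(\ii)|e_s(\ii)\|$ is a \emph{continuous} function on $\Sigma$, the exponent $\ly{s}$ is weak-$*$ continuous, so $\tfrac1n\ly{s}(\nu_{n})\to\ly{s}(\widetilde\nu)$, while $\tfrac1n h_{\nu_{n}}\to h_{\widetilde\nu}$ by the very definition of entropy; hence $h_{\nu_{n}}/\ly{s}(\nu_{n})\to h_{\widetilde\nu}/\ly{s}(\widetilde\nu)>s-\varepsilon$.

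Finally I would apply Theorem~\ref{tHL} to $(\Phi^{n},\nu_{n})$, which requires that hypotheses (\ref{chl1})–(\ref{chl4}) be inherited by $\Phi^{n}$: dominated splitting and the SSC are immediate; $1$-bunchedness follows from $\alpha_1(A_{\underline i})^{2}\le\prod_j\alpha_1(A_{i_j})^{2}\le\prod_j\alpha_2(A_{i_j})\le\alpha_2(A_{\underline i})$; and backward non-overlapping holds because it is precisely the strong separation condition for the inverse family $\{A_i^{-1}\}$ on the multicone $M$, and the SSC passes to iterates (peel off the outer factor $A_{i_n}^{-1}$ and use injectivity of the $A_i^{-1}$). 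Theorem~\ref{tHL} then gives $\dim_H\mu_{\nu_{n}}=h_{\nu_{n}}/\ly{s}(\nu_{n})\le 1$, and since $\mu_{\nu_{n}}$ is supported on $\Lambda$ we obtain $\dim_H\Lambda\ge h_{\nu_{n}}/\ly{s}(\nu_{n})\ge s-2\varepsilon$ for $n$ large; letting $\varepsilon\to 0$ yields $\dim_H\Lambda\ge s$ together with $s\le 1$. Combined with the upper bound, $\dim_H\Lambda=\dim_B\Lambda=s\le 1$. The only step that is not routine is the approximation of the dimension-maximising measure by honest Bernoulli measures with the correct limiting Lyapunov dimension — this is exactly where dominated splitting is used, since it writes $\ly{s}$ as the integral of a continuous potential and hence makes it weak-$*$ continuous — whereas the inheritance of the four hypotheses by $\Phi^{n}$, in particular of backward non-overlapping, is elementary and only needs to be spelled out.
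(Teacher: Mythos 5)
Your argument is correct in substance and ends exactly where the paper's proof ends --- apply Theorem~\ref{tHL} to the iterate $\Phi^n=\{f_{\underline i}\}_{|\underline i|=n}$ with a suitable Bernoulli measure and let $n\to\infty$ --- but it manufactures the approximating Bernoulli measures by a genuinely different and heavier route. The paper simply takes the explicit weights $\bigl(\alpha_1(A_{\underline i})^{s_n}\bigr)_{|\underline i|=n}$, where $s_n$ solves $\sum_{|\underline i|=n}\alpha_1(A_{\underline i})^{s_n}=1$: for these the entropy is $-s_n\sum\alpha_1(A_{\underline i})^{s_n}\log\alpha_1(A_{\underline i})$, and by Lemma~\ref{ldomsplit2} the Lyapunov exponent differs from $-\sum\alpha_1(A_{\underline i})^{s_n}\log\alpha_1(A_{\underline i})$ only by the uniform constant $C$, giving $\dim_H\mu_n\geq s_n/(1+C/(n\log\alpha_{\max}))$ with no variational principle and no Bernoullisation, after which $s_n\to s$ closes the argument. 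You instead invoke the subadditive variational principle (Cao--Feng--Huang/K\"aenm\"aki), pick a near-optimal ergodic measure and Bernoullise it, using dominated splitting to write $\chi^s$ as the integral of a continuous potential. This works, but it imports an external theorem the paper never uses, and the weak-$*$ continuity step needs a word about the fact that the $n$-block Bernoullisation is only $\sigma^n$-invariant (pass to its $\sigma$-average, or argue via the almost-additivity of $\log\alpha_1(A^{(k)})$ from Lemma~\ref{ldomsplit2}). Your verification that hypotheses (\ref{chl1})--(\ref{chl4}) are inherited by $\Phi^n$ is correct and more detailed than the paper's one-line remark.

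One logical wrinkle should be repaired: you apply the identity $P(t)=\sup_\mu\{h_\mu-t\chi^{s}_{\mu}\}$ \emph{at} $t=s$, which is only valid for $t\leq1$, while $s\leq1$ is one of the conclusions being proved --- as written this is circular. The fix is to run the whole argument with $Q(t):=\lim_n\frac1n\log\sum_{|\underline i|=n}\alpha_1(A_{\underline i})^t=\sup_\mu\{h_\mu-t\chi^{s}_{\mu}\}$, which dominates $P$ everywhere and agrees with it on $[0,1]$. Your construction together with Theorem~\ref{tHL} shows that the root $s'$ of $Q$ satisfies $s'\leq1$ and $\dim_H\Lambda\geq s'$; then $s'\leq1$ forces $P(s')=Q(s')=0$, i.e.\ $s=s'$. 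This is, in effect, what the paper's passage from $s_n$ to $s$ accomplishes.
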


\begin{proof}
It is easy to see that the assumptions (\ref{chl1})-(\ref{chl4}) of Theorem~\ref{tHL} are inherited by the higher iterations, i.e. for any $n\geq1$ the IFS $\Phi^n=\left\{f_{\underline{i}}\right\}_{|\underline{i}|=n}$ and the set of matrices $\Alpha^n=\left\{A_{\underline{i}}\right\}_{|\underline{i}|=n}$ satisfy the assumptions (\ref{chl1})-(\ref{chl4}).

Let us define a monotone decreasing sequence $\left\{s_n\right\}_{n=1}^{\infty}$ such that $s_n$ are the unique solution of the equations
\[
\sum_{|\underline{i}|=n}\alpha_1(A_{\underline{i}})^{s_n}=1.
\]We define the left-shift invariant Bernoulli measure $\nu_n$ with probability vector $\left(\alpha_1(A_{\underline{i}})^{s_n}\right)_{|\underline{i}|=n}$ and let $\mu_n$ be the associated self-affine measure. Then by Theorem~\ref{tHL} and \eqref{elyapint}, for every $n\geq1$
\[
1\geq\dim_H\mu_n=\frac{h_{\nu_n}}{\chi_{\mu_n}^s}\geq\frac{s_n}{1+\frac{C}{n\log\alpha_{\max}}},
\]
where $\alpha_{\max}=\max_i\alpha_1(A_i)$. Hence $\lim_{n\rightarrow}s_n=s\leq1$. Moreover, by \cite[Proposition~5.1]{F}
\[
s\geq\overline{\dim}_B\Lambda\geq\dim_H\Lambda\geq\lim_{n\rightarrow\infty}\dim_H\mu_n=s.
\]\end{proof}

\subsection{Triangular matrices}
The other way to study the dimension of $\nu_{ss}$ is to handle the overlaps of the associated IFS on the projective space. Since this IFS is very difficult to handle in general, we focus on a special family of self-affine sets. Let us assume that the matrices in $\Alpha$ are lower triangular, i.e.
\begin{equation}\label{etrig}
A_i=\left[
      \begin{array}{cc}
        a_i & 0 \\
        b_i & c_i \\
      \end{array}
    \right],
\end{equation}
where $0<|a_i|,|c_i|<1$ for every $i=1,\dots,N$. Using \cite[Theorem~2.5]{FM}, the subadditive pressure function $P(s)$ defined in \eqref{esap} can be written in a simpler form, i.e
\begin{equation}\label{etrigsap}
P(s)=\left\{\begin{array}{cc}
       \log\max\left\{\sum_{i=1}^N|a_i|^{s},\sum_{i=1}^N|c_i|^{s}\right\} & \text{if $0\leq s<1$} \\
       \log\max\left\{\sum_{i=1}^N|a_i||c_i|^{s-1},\sum_{i=1}^N|c_i||a_i|^{s-1}\right\} & \text{if $1\leq s<2$} \\
       \log\sum_{i=1}^N(|a_i||c_i|)^{s/2} & \text{if $s\geq2$.}
     \end{array}\right.
\end{equation}

In the case of triangular matrices, the calculation of Lyapunov exponents of a self-affine measure $\mu$ with probability vector $(p_1,\dots,p_N)$ is much simpler. That is,
\begin{equation*}%\label{etriglyap}
\ly{ss}=\max\left\{-\sum_{i=1}^Np_i\log|a_i|,-\sum_{i=1}^Np_i\log|c_i|\right\}\text{ and }\ly{s}=\min\left\{-\sum_{i=1}^Np_i\log|a_i|,-\sum_{i=1}^Np_i\log|c_i|\right\}.
\end{equation*}

\begin{lemma}\label{ldomsplittri}
The set $\Alpha=\left\{A_1, A_2,\dots, A_N\right\}$ of contracting, non-singular $2\times2$ lower-triangular matrices in the form~\eqref{etrig}, satisfies the dominated splitting condition if
\[
\text{ either }|a_i|>|c_i|\text{ for every $i=1,\dots,N$ or }|a_i|<|c_i|\text{ for every $i=1,\dots,N$.}
\]
\end{lemma}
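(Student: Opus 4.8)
The plan is to sidestep the invariant directions entirely and bound the ratio of singular values directly, exploiting that a product of lower‑triangular matrices is again lower‑triangular, with diagonal entries equal to the products of the diagonal entries of the factors.

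First I would record that for $\underline i=(i_1,\dots,i_n)\in\Sigma^*$ an easy induction on $n$ gives
$$A_{\underline i}=A_{i_1}\cdots A_{i_n}=\left[\begin{array}{cc} a_{\underline i} & 0\\ b_{\underline i} & c_{\underline i}\end{array}\right],\qquad a_{\underline i}:=\prod_{k=1}^{n} a_{i_k},\quad c_{\underline i}:=\prod_{k=1}^{n} c_{i_k},$$
for some $b_{\underline i}\in\R$. Since $\alpha_1(A_{\underline i})=\|A_{\underline i}\|$ and $\alpha_1(A_{\underline i})\alpha_2(A_{\underline i})=|\det A_{\underline i}|=|a_{\underline i}\,c_{\underline i}|$, everything reduces to a lower bound for $\alpha_1(A_{\underline i})$.

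Second, assume $|a_i|>|c_i|$ for every $i$. Evaluating the operator norm on the first standard basis vector, $\alpha_1(A_{\underline i})\ge\|A_{\underline i}(1,0)^{\top}\|=\sqrt{a_{\underline i}^{\,2}+b_{\underline i}^{\,2}}\ge|a_{\underline i}|$, hence $\alpha_2(A_{\underline i})=|a_{\underline i}c_{\underline i}|/\alpha_1(A_{\underline i})\le|c_{\underline i}|$, and therefore
$$\frac{\alpha_1(A_{\underline i})}{\alpha_2(A_{\underline i})}\ \ge\ \frac{|a_{\underline i}|}{|c_{\underline i}|}\ =\ \prod_{k=1}^{n}\left|\frac{a_{i_k}}{c_{i_k}}\right|\ \ge\ \rho^{\,n},\qquad \rho:=\min_{1\le i\le N}\left|\frac{a_i}{c_i}\right|>1,$$
where $\rho>1$ because there are finitely many maps and each satisfies $|a_i|>|c_i|$. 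This is precisely Definition~\ref{ddomsplit} with $C=1$ and $\delta=\log\rho$. The case $|a_i|<|c_i|$ for every $i$ is symmetric: evaluating the norm on $(0,1)^{\top}$ gives $\alpha_1(A_{\underline i})\ge|c_{\underline i}|$, hence $\alpha_2(A_{\underline i})\le|a_{\underline i}|$, and $\alpha_1(A_{\underline i})/\alpha_2(A_{\underline i})\ge\prod_{k}|c_{i_k}/a_{i_k}|\ge\rho^{\,n}$ with $\rho:=\min_i|c_i/a_i|>1$.

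There is essentially no obstacle in this argument; the only point requiring a word of care is the trivial estimate $\alpha_1(A_{\underline i})\ge|a_{\underline i}|$ (respectively $\ge|c_{\underline i}|$), which one gets for free by testing the operator norm against a coordinate vector. Alternatively one could verify the multicone criterion of Lemma~\ref{ldomsplit3a}: when $|a_i|>|c_i|$ the direction $\langle(0,1)^{\top}\rangle$ is a common repelling fixed point of the projective maps induced by the $A_i$, so the closed complement of a sufficiently thin cone around it is forward invariant under $\Alpha$; but the direct computation above is shorter and additionally exhibits the constants $C,\delta$ explicitly.
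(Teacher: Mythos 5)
Your proof is correct and complete. The only substantive points to check are the multiplicativity of the diagonal entries under products of lower-triangular matrices, the identities $\alpha_1(A)=\|A\|$ and $\alpha_1(A)\alpha_2(A)=|\det A|$, and the elementary bound $\alpha_1(A_{\underline i})\ge|a_{\underline i}|$ (respectively $\ge|c_{\underline i}|$) obtained by testing the operator norm on a coordinate vector; all of these are right, and the finiteness of $\Alpha$ does give $\rho>1$, so Definition~\ref{ddomsplit} holds with $C=1$ and $\delta=\log\rho$.

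The paper itself gives no argument beyond the remark that the lemma is ``straightforward by Lemma~\ref{ldomsplit1}'', i.e.\ it suggests exhibiting the invariant splitting directly: under the hypothesis one coordinate direction (the $y$-axis, since $A_i(0,1)^{\top}=(0,c_i)^{\top}$) is invariant and serves as $e_{ss}$ or $e_s$ according to the sign of the inequality, while the complementary invariant direction is the one computed later in Lemma~\ref{ltrig2}; one then checks the norm-ratio gap along these directions. Your route is genuinely different and, if anything, cleaner: you verify Definition~\ref{ddomsplit} directly from the singular-value identities, which avoids constructing the invariant line field altogether and produces explicit constants. The paper's suggested route has the mild advantage of simultaneously identifying $e_s$ and $e_{ss}$, which the paper needs anyway in the subsequent applications, but as a proof of this lemma your argument is entirely adequate and self-contained.
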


The proof of the lemma is straightforward by Lemma~\ref{ldomsplit1}.

In the case of triangular matrices the study of the dimension of self-affine set can be tracked back to study the dimension of some self-similar measure. In the first case of Lemma~\ref{ldomsplittri}, the projected measure in Theorem~\ref{tmain1} is a self-similar measure. In the second case, the measure $\nu_{ss}$ is a self-similar measure. We introduce here a condition, which guarantees according to the recent result of Hochman \cite[Theorem~1.1]{H} that the dimension of a self-similar measure is the quotient of the entropy and Lyapunov exponent. 

\begin{definition}
For a self-similar IFS $\phi=\left\{g_i(x)=\beta_ix+\gamma_i\right\}_{i=1}^N$ on the real line let
\begin{eqnarray*}& d(g_{\underline{i}},g_{\underline{j}}):=\left\{\begin{array}{cc}
                                            \infty & \text{if $\beta_{\underline{i}}\neq\beta_{\underline{j}}$} \\
                                            |g_{\underline{i}}(0)-g_{\underline{j}}(0)| & \text{if $\beta_{\underline{i}}=\beta_{\underline{j}}$}.
                                          \end{array}\right.\text{ and }\\
& \Delta_n:=\min\left\{d(g_{\underline{i}},g_{\underline{j}}):\underline{i}\neq\underline{j}\in\left\{1,\dots,N\right\}^n\right\}.
\end{eqnarray*} We say that the IFS $\phi$ satisfies the \underline{Hochman-condition} if
\[
\lim_{n\rightarrow\infty}-\frac{1}{n}\log\Delta_n<+\infty.
\]
\end{definition}

Hochman showed that the exceptional set of parameters, where the condition does not hold is small in sense of dimension, see \cite[Theorem~1.7, Theorem~1.8]{H}.

\begin{theorem}%\label{ttri1}
Let $\Alpha=\left\{A_i\right\}_{i=1}^N$ be a finite set of triangular matrices of type~\eqref{etrig} and let\\ $\Phi=\left\{f_i(\xv)=A_i\xv+(t_i,q_i)\right\}_{i=1}^N$~be~an IFS on the plane. Suppose that
\begin{enumerate}
    \item\label{ctri11} $|a_i|>|c_i|$ for every $i=1,\dots,N$,
    \item\label{ctri12} $\Phi$ satisfies the strong separation condition,
    \item\label{ctri13} the self-similar IFS $\phi=\left\{g_i(x)=a_ix+t_i\right\}_{i=1}^N$ satisfies the Hochman-condition
\end{enumerate} then for every self-affine measure $\mu$
\begin{equation}\label{et11}
\dim_H\mu=\ldim\mu.
\end{equation}
Moreover,
\begin{equation}\label{et12}
\dim_H\Lambda=\dim_B\Lambda=\min\left\{s_1,s_2\right\},
\end{equation}
where $\Lambda$ is the attractor of $\Phi$ and $s_1$ and $s_2$ are the unique solutions of the equations
\begin{equation}\label{et13}
\sum_{i=1}^N|a_i|^{s_1}=1\text{ and }\sum_{i=1}^N|a_i||c_i|^{s_2-1}=1.
\end{equation}
\end{theorem}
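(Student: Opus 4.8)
The plan is to derive \eqref{et11} from Theorem~\ref{tmain1} by computing the transversal measure explicitly, and then to obtain \eqref{et12}--\eqref{et13} by pairing Falconer's upper box bound with two well-chosen self-affine measures. First I would observe that, since $|a_i|>|c_i|$ for every $i$, Lemma~\ref{ldomsplittri} provides dominated splitting, so together with assumption~(\ref{ctri12}) the hypotheses of Theorem~\ref{tmain1} (and hence the equivalence \eqref{elyapHdimequal}) are satisfied. The decisive feature of lower-triangular matrices of the form~\eqref{etrig} is that the vertical axis $\langle(0,1)\rangle$ is a common invariant subspace of all the $A_i$, and because $|c_i|<|a_i|$ it is the strongly contracting one; by the uniqueness in Lemma~\ref{ldomsplit1} this forces $e_{ss}(\ii)\equiv\langle(0,1)\rangle$, so $\pr{\ii}$ is the orthogonal projection $\proj_x$ onto the horizontal coordinate axis and does not depend on $\ii$. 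Since $\proj_x\circ f_i=g_i\circ\proj_x$ with $g_i(x)=a_ix+t_i$, the transversal measure $\mu^T_{\ii}=(\proj_x)_*\mu$ is precisely the self-similar measure of $\phi=\{g_i\}_{i=1}^N$ attached to the probability vector of $\mu$.

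Next I would invoke Hochman's theorem. By assumption~(\ref{ctri13}) the IFS $\phi$ satisfies the Hochman condition, so \cite[Theorem~1.1]{H} gives $\dim_H\mu^T_{\ii}=\min\{1,h_{\nu}/\chi\}$, where $\chi=-\sum_ip_i\log|a_i|$; and $\chi=\ly{s}$ because $|a_i|>|c_i|$. On the other hand $\mu^T_{\ii}$ is an orthogonal projection of $\mu$, so $\dim_H\mu^T_{\ii}\leq\min\{1,\dim_H\mu\}$, while by \cite{JPS} one has $\dim_H\mu\leq\ldim\mu\leq h_{\nu}/\ly{s}$; combining these, $\min\{1,\dim_H\mu\}\leq\min\{1,h_{\nu}/\ly{s}\}=\dim_H\mu^T_{\ii}\leq\min\{1,\dim_H\mu\}$, so $\dim_H\mu^T_{\ii}=\min\{1,\dim_H\mu\}$, and \eqref{elyapHdimequal} yields \eqref{et11} for every self-affine measure $\mu$.

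For the attractor, Falconer's \cite[Proposition~5.1]{F} gives $\overline{\dim}_B\Lambda\leq s_0$, the root of the pressure $P$. Using the simplified form \eqref{etrigsap} together with $|a_i|>|c_i|$, one has $P(s)=\log\sum_i|a_i|^s$ on $[0,1]$ and $P(s)=\log\sum_i|a_i||c_i|^{s-1}$ on $[1,2]$; a term-by-term comparison (the ratio of the $i$-th terms equals $(|a_i|/|c_i|)^{s-1}$) shows the two sums agree at $s=1$, the first being smaller for $s<1$ and larger for $s>1$, whence $s_0=s_1$ when $s_1\leq1$ and $s_0=s_2$ when $s_1>1$; note also that SSC forces $\sum_i|\det A_i|<1$, hence $s_2<2$, so in all cases $s_0=\min\{s_1,s_2\}$. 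For the matching lower bound I would exhibit a self-affine measure realising $\min\{s_1,s_2\}$: if $s_1\leq1$, take $\nu$ to be the Bernoulli measure with weights $(|a_i|^{s_1})_{i=1}^N$, so that $h_{\nu}=s_1\ly{s}$ and $\ldim\mu=\min\{2,s_1,1+(s_1-1)\ly{s}/\ly{ss}\}=s_1$, whence $\dim_H\mu=s_1$ by \eqref{et11}; if $s_1>1$, take $\nu$ with weights $(|a_i||c_i|^{s_2-1})_{i=1}^N$, so that $h_{\nu}=\ly{s}+(s_2-1)\ly{ss}$ and $\ldim\mu=\min\{2,1+(s_2-1)\ly{ss}/\ly{s},s_2\}=s_2$, whence $\dim_H\mu=s_2$. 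Since $\mu$ is carried by $\Lambda$ we get $\dim_H\Lambda\geq\min\{s_1,s_2\}=s_0\geq\overline{\dim}_B\Lambda\geq\underline{\dim}_B\Lambda\geq\dim_H\Lambda$, which gives \eqref{et12}.

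The step I expect to be the main obstacle is the very first one: checking rigorously that under \eqref{etrig} with $|a_i|>|c_i|$ the dominated-splitting foliation degenerates to the constant vertical foliation, and that $\mu^T_{\ii}$ is then a bona fide self-similar measure on $\R$ with the right contraction ratios and weights — this is exactly what licences the use of Hochman's theorem. Everything after that is bookkeeping: the displayed weight vectors are probability vectors by the defining equations \eqref{et13}, and the fact that the minimum defining $\ldim\mu$ is attained at the stated entry uses only $0<\ly{s}\leq\ly{ss}$.
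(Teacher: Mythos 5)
Your proposal is correct and follows essentially the same route as the paper: dominated splitting via Lemma~\ref{ldomsplittri}, identification of $e_{ss}$ with the vertical axis so that $\mu^T_{\ii}$ is the self-similar measure of $\phi$, Hochman's theorem plus \eqref{elyapHdimequal} for \eqref{et11}, and then Falconer's upper bound paired with the Bernoulli measures of weights $(|a_i|^{s_1})_i$ and $(|a_i||c_i|^{s_2-1})_i$ for \eqref{et12}. You merely spell out some steps the paper leaves implicit (the pressure-root identification and the verification that the chosen measures realise $\ldim\mu=s_1$ or $s_2$), all of which check out.
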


\begin{proof}
Let $\nu=\left\{p_1,\dots,p_N\right\}^{\N}$ be an arbitrary Bernoulli-measure on $\Sigma^+$ and $\mu$ be the corresponding self-affine measure.

Condition~\eqref{ctri11} implies by Lemma~\ref{ldomsplittri} that the set $\Alpha$ of matrices satisfies dominated splitting and $e_{ss}(\ii)$, defined in Lemma~\ref{ldomsplit1}, is equal to the subspace parallel to the $y$-axis for every $\ii\in\Sigma^+$. Hence, the transversal measure $\mu_{\ii}^T\equiv\mu^T$, defined in \eqref{etransmeasure}, is a self-similar measure with the IFS $\phi=\left\{g_i(x)=a_ix+t_i\right\}_{i=1}^N$, namely
$$
\mu^T=\sum_{i=1}^Np_i\mu^T\circ g_i^{-1}.
$$
By condition~\eqref{ctri13} and \cite[Theorem~1.1]{H},
\[
\dim_H\mu^T=\min\left\{\frac{h_{\nu}}{\ly{s}},1\right\}.
\]
Thus, $\dim_H\mu^T=\min\left\{\ldim\mu,1\right\}$. By condition~\eqref{ctri12}, applying Theorem~\ref{tmain1} and \eqref{elyapHdimequal} we get \eqref{et11}.

To prove \eqref{et12}, first let us observe that condition~\eqref{ctri11} implies that the root of the subadditive pressure \eqref{etrigsap} is the minimum of the solutions of the equations \eqref{et13}. Then we get the upper bound by \cite[Proposition~5.1]{F}. The lower bound follows by choosing the measure $\nu$ according to the probability vector $\left\{|a_1|^{s_1},\dots,|a_N|^{s_1}\right\}$ or $\left\{|a_1||c_1|^{s_2-1},\dots,|a_N||c_N|^{s_2-1}\right\}$.
\end{proof}

Now we turn to the second case of Lemma~\ref{ldomsplittri}.

\begin{theorem}\label{ttri2}
Let $\Alpha=\left\{A_i\right\}_{i=1}^N$ be a finite set of triangular matrices of type~\eqref{etrig} and let\\ $\Phi=\left\{f_i(\xv)=A_i\xv+(t_i,q_i)\right\}_{i=1}^N$ be the IFS on the plane. Moreover, let $\nu$ be a left-shift invariant and ergodic Bernoulli-probability measure on $\Sigma^{+}$, and $\mu$ be the corresponding self-affine measure. Suppose that
\begin{enumerate}
    \item\label{ctri21} $|a_i|<|c_i|$ for every $i=1,\dots,N$,
    \item\label{ctri22} $\Phi$ satisfies the strong separation condition,
    \item\label{ctri23} the self-similar IFS $\phi=\left\{g_i(x)=\dfrac{a_i}{c_i}x-\dfrac{b_i}{c_i}\right\}_{i=1}^N$ satisfies the Hochman-condition,
    \item\label{ctri24} $\dfrac{h_{\nu}}{\ly{ss}-\ly{s}}\geq\min\left\{1,\dfrac{h_{\nu}}{\ly{s}}\right\}$
\end{enumerate} then
\begin{equation}\label{et21}
\dim_H\mu=\ldim\mu.
\end{equation}
Moreover, if condition~\eqref{ctri24} is replaced by the $1$-bunched property, i.e. $|a_i|\geq|c_i|^2$ then
\begin{equation}\label{et22}
\dim_H\Lambda=\dim_B\Lambda=\min\left\{s_1,s_2\right\},
\end{equation}
where $\Lambda$ is the attractor of $\Phi$ and $s_1$ and $s_2$ are the unique solutions of the equations
\begin{equation}\label{et23}
\sum_{i=1}^N|c_i|^{s_1}=1\text{ and }\sum_{i=1}^N|c_i||a_i|^{s_2-1}=1.
\end{equation}
\end{theorem}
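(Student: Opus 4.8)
The plan is to identify the push-down measure $\nu_{ss}$ with a self-similar measure on a line, and then feed that identification into Theorem~\ref{tmain1b} for \eqref{et21} and into the pressure computation for \eqref{et22}. By condition~(\ref{ctri21}) and Lemma~\ref{ldomsplittri}, $\Alpha$ satisfies dominated splitting. Since every $A_i$ is lower triangular, the vertical line is $A_i$-invariant, and because in this case the slow exponent is $\ly{s}=-\sum_i p_i\log|c_i|$ (i.e. it matches the $c_i$-contraction along the $y$-axis), the vertical direction must equal $e_s(\ii)$ for every $\ii$; consequently $e_{ss}(\ii)$ stays uniformly away from the vertical, so the slope gives a bi-Lipschitz chart on the part of $\mathbf{P}^1$ supporting $\nu_{ss}$. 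A direct computation shows that $A_i^{-1}$ acts on slopes by the affine contraction $t\mapsto \tfrac{a_i}{c_i}t-\tfrac{b_i}{c_i}=g_i(t)$, so by Lemma~\ref{ldomsplit3b} the slope of $e_{ss}(\ii)$ is the natural projection of $\ii_+$ under $\phi=\{g_i\}$; hence $\nu_{ss}$ is, up to the slope chart, exactly the self-similar measure of $\phi$ with weights $(p_1,\dots,p_N)$.

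For \eqref{et21} I would argue as follows. The Lyapunov exponent of $\phi$ with weights $(p_i)$ is $-\sum_i p_i\log|a_i/c_i|=\ly{ss}-\ly{s}$, using the triangular formulas for $\ly{s},\ly{ss}$. By condition~(\ref{ctri23}) and Hochman's theorem \cite[Theorem~1.1]{H}, $\dim_H\nu_{ss}=\min\{1,h_{\nu}/(\ly{ss}-\ly{s})\}$. Since $\ldim\mu\le h_{\nu}/\ly{s}$, condition~(\ref{ctri24}) gives $h_{\nu}/(\ly{ss}-\ly{s})\ge\min\{1,h_{\nu}/\ly{s}\}\ge\min\{1,\ldim\mu\}$, and as $1\ge\min\{1,\ldim\mu\}$ trivially, taking the minimum yields $\dim_H\nu_{ss}\ge\min\{1,\ldim\mu\}$. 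Thus hypothesis (3) of Theorem~\ref{tmain1b} holds, and that theorem gives \eqref{et21}.

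For \eqref{et22}: the $1$-bunched property $|a_i|\ge|c_i|^2$ forces $\ly{ss}\le2\ly{s}$, whence $h_{\nu}/(\ly{ss}-\ly{s})\ge h_{\nu}/\ly{s}\ge\min\{1,h_{\nu}/\ly{s}\}$, so condition~(\ref{ctri24}) holds automatically and \eqref{et21} is valid for every Bernoulli measure $\mu$. Using \eqref{etrigsap} together with $0<|a_i|<|c_i|<1$ one checks that the maxima there are always realised by the $|c_i|$-terms, so the root of the subadditive pressure equals $\min\{s_1,s_2\}$ with $s_1,s_2$ as in \eqref{et23}, and Falconer's bound \cite[Proposition~5.1]{F} gives $\overline{\dim}_B\Lambda\le\min\{s_1,s_2\}$. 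For the matching lower bound I would take the Bernoulli measure with weights $(|c_i|^{s_1})_i$ when $s_1\le1$ and $(|c_i||a_i|^{s_2-1})_i$ when $s_1>1$; computing $h_{\nu}$ from the triangular exponent formulas yields $\ldim\mu=s_1$, respectively $\ldim\mu=s_2$, i.e. $\ldim\mu=\min\{s_1,s_2\}$ in both cases. Combining with \eqref{et21} gives $\dim_H\Lambda\ge\dim_H\mu=\min\{s_1,s_2\}$, which with the upper bound proves \eqref{et22} (the degenerate case $\min\{s_1,s_2\}\ge2$, in which both dimensions equal $2$, is handled the same way).

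The only genuinely non-routine step is the identification in the first paragraph: one must check carefully that the projective action of $\{A_i^{-1}\}$ on the relevant forward-invariant cone is conjugate, via the slope chart, to $\phi=\{g_i\}$, that this chart is bi-Lipschitz on the closure of that cone so that $\dim_H\nu_{ss}$ coincides with the dimension of the self-similar measure, and that $e_{ss}$ intertwines the shift on $\Sigma^{+}$ with $\phi$ exactly, so that $\nu_{ss}$ really is that self-similar measure. Everything else is bookkeeping with the explicit formulas for $\ly{s},\ly{ss}$ and the subadditive pressure, plus the quoted inputs (Theorem~\ref{tmain1b}, Hochman's theorem, and Falconer's upper bound).
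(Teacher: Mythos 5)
Your proposal is correct and follows essentially the same route as the paper: the identification of $\nu_{ss}$ (via the slope chart) with the self-similar measure of $\phi=\{g_i\}$ is exactly the content of Lemma~\ref{ltrig2} and \eqref{essdist}, after which Hochman's theorem, condition~\eqref{ctri24}, and Theorem~\ref{tmain1b} give \eqref{et21}, and the pressure formula \eqref{etrigsap} plus the Bernoulli measures with weights $(|c_i|^{s_1})_i$ or $(|c_i||a_i|^{s_2-1})_i$ give \eqref{et22}. No substantive differences.
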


We note if $\ldim\mu\leq1$ then condition~\eqref{ctri24} is basically the $1$-bunched property, defined in Theorem~\ref{tHL}. However, if $\ldim\mu>1$ then condition~\eqref{ctri24} is much relaxed and holds if $\ldim\mu$ is sufficiently large, for example if $h_{\nu}/\ly{ss}\geq1$.

\begin{lemma}\label{ltrig2}
Let $\Alpha=\left\{A_i\right\}_{i=1}^N$ be a finite set of matrices of type~\eqref{etrig} and let us suppose that $|a_i|<|c_i|$ for every $i=1,\dots,N$. Then the slopes of strong stable directions, defined in Lemma~\ref{ldomsplit1}, form a self-similar set of IFS $\phi=\left\{g_i(x)=\dfrac{a_i}{c_i}x-\dfrac{b_i}{c_i}\right\}_{i=1}^N$. In particular, for every $\ii=(i_0,i_1,\dots)\in\Sigma^+$ the subspace $e_{ss}(\ii)$ is parallel to the vector $v(\ii)=(1,\vartheta(\ii))^T$, where $$\vartheta(\ii)=-\sum_{n=0}^{\infty}\dfrac{b_{i_n}a_{i_{n-1}}\cdots a_{i_{0}}}{c_{i_n}c_{i_{n-1}}\cdots c_{i_0}}.$$
\end{lemma}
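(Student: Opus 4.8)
The plan is to pass to affine slope coordinates on the projective line $\mathbf{P}^1$ and recognise the action of the inverse matrices on directions as the self-similar IFS $\phi$. For a lower-triangular $A_i$ as in \eqref{etrig} one computes $A_i^{-1}=\left(\begin{smallmatrix}a_i^{-1}&0\\ -b_i/(a_ic_i)&c_i^{-1}\end{smallmatrix}\right)$, so that $A_i^{-1}(1,x)^T$ is a scalar multiple of $(1,g_i(x))^T$ with $g_i(x)=\frac{a_i}{c_i}x-\frac{b_i}{c_i}$; since $|a_i|<|c_i|$ each $g_i$ is a uniform contraction of $\R$. Expanding the composition $g_{i_0}\circ\cdots\circ g_{i_{n-1}}(0)$ and letting $n\to\infty$ produces an absolutely convergent limit
$\vartheta(\ii)=-\sum_{n\ge0}\frac{b_{i_n}a_{i_{n-1}}\cdots a_{i_0}}{c_{i_n}c_{i_{n-1}}\cdots c_{i_0}}$,
which is exactly the natural projection $\pi_\phi(\ii)$ of the IFS $\phi$. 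Hence $\vartheta$ depends only on $\ii_+$, it takes values in the bounded attractor of $\phi$ (the asserted self-similar set), and it satisfies the functional equation $\vartheta(\ii)=g_{i_0}(\vartheta(\sigma\ii))$.

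Next I set $v(\ii):=(1,\vartheta(\ii))^T$. Combining the slope computation above with the functional equation gives $A_{i_0}\langle v(\ii)\rangle=\langle v(\sigma\ii)\rangle$, so $\ii\mapsto\langle v(\ii)\rangle$ is a continuous $A$-invariant field of directions depending only on $\ii_+$. To identify this field with $e_{ss}$ I would compare growth along the cocycle: boundedness of $\vartheta$ gives $\|A^{(n)}(\ii)|\langle v(\ii)\rangle\|\asymp|a_{i_0}\cdots a_{i_{n-1}}|$, whereas the $y$-axis $\langle(0,1)^T\rangle$ is also $A$-invariant with $\|A^{(n)}(\ii)|\langle(0,1)^T\rangle\|=|c_{i_0}\cdots c_{i_{n-1}}|$; since $\min_i|c_i|/|a_i|>1$, the pair $(\langle(0,1)^T\rangle,\langle v(\ii)\rangle)$ satisfies the domination inequality of Lemma~\ref{ldomsplit1}(2), so by the uniqueness of the dominated splitting (see \cite[Section~B.1]{BDV}) it must coincide with $(e_s,e_{ss})$; in particular $e_{ss}(\ii)=\langle v(\ii)\rangle=\langle(1,\vartheta(\ii))^T\rangle$. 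Alternatively, and self-containedly, one can evaluate the formula $e_{ss}(\ii)=\bigcap_{n\ge1}A_{i_0}^{-1}\cdots A_{i_{n-1}}^{-1}(\overline{M^c})$ of Lemma~\ref{ldomsplit3b}: the vertical direction cannot lie in $\overline{M^c}$, for it is fixed by every $A_i^{-1}$ and would then belong to $e_{ss}(\ii)$ for all $\ii$, forcing $e_{ss}$ to be the $y$-axis and contradicting the domination bound because $|a_i|<|c_i|$; therefore $A_{i_0}^{-1}\cdots A_{i_{n-1}}^{-1}$ acts on $\overline{M^c}$ as the contraction $g_{i_0}\circ\cdots\circ g_{i_{n-1}}$ and the nested intersection collapses to the single point $\vartheta(\ii)$.

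The only genuinely delicate point is this identification step, i.e.\ certifying that the invariant direction field I have constructed is the strong-stable one rather than the other ($y$-axis) invariant field; I expect to settle it precisely as above, through either uniqueness of the dominated splitting or the explicit intersection formula of Lemma~\ref{ldomsplit3b}, in both cases using crucially that $|a_i|<|c_i|$. Everything else is routine: the matrix-inverse computation is a one-line calculation, the convergence of the defining series is immediate from $|a_i/c_i|<1$, and the statement that $\{\vartheta(\ii):\ii\in\Sigma^+\}$ is a self-similar set is just the assertion that it is the attractor of $\phi$.
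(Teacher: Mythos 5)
Your proposal is correct and follows essentially the same route as the paper: the core step is the invariance relation $A_{i_0}v(\ii)=a_{i_0}v(\sigma\ii)$ (equivalently, your computation that $A_i^{-1}$ acts on slope coordinates as $g_i$), after which the field $\langle v(\ii)\rangle$ is identified with $e_{ss}$ via the growth rates $\prod|a_{i_k}|$ versus $\prod|c_{i_k}|$ and the domination, exactly as the paper does by invoking Lemma~\ref{ldomsplit2} and Lemma~\ref{ldomsplit3b}. Your write-up merely makes explicit the identification step (and the exclusion of the vertical direction) that the paper's one-line proof delegates to those lemmas.
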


\begin{proof}
By simple algebraic calculations we have $$A_{i_0}v(\ii)=a_{i_0}v(\sigma\ii).$$
The statement follows by Lemma~\ref{ldomsplit2} and Lemma~\ref{ldomsplit3b}.
\end{proof}

An immediate consequence of Lemma~\ref{ltrig2} that for any Bernoulli measure on $\Sigma^+$
\begin{equation}\label{essdist}
\dim_H\nu_{ss}=\dim_H\nu\circ\vartheta^{-1},
\end{equation}
where $\nu_{ss}$ is defined in \eqref{edirections}.

\begin{proof}[Proof of Theorem~\ref{ttri2}]
First, let us observe that condition~\eqref{ctri23} with \eqref{essdist} and \cite[Theorem~1.1]{H} imply that
$$\dim_H\nu_{ss}=\min\left\{1,\dfrac{h_{\nu}}{\ly{ss}-\ly{s}}\right\}.$$
By Lemma~\ref{ldomsplittri}, condition~\eqref{ctri21} implies that the IFS~$\Phi$ satisfies dominated splitting, and together with conditions~\eqref{ctri22} and~\eqref{ctri24} by using Theorem~\ref{tmain1b}, \eqref{et21} follows.

To prove \eqref{et22}, first let us observe that condition~\eqref{ctri21} implies that the root of the subadditive pressure \eqref{etrigsap} is the minimum of the solutions of the equations \eqref{et23}. One can check that the $1$-bunched property implies that condition~\eqref{ctri24} holds for any self-similar measure. Hence, the lower bound follows by choosing the measure $\nu$ according to the probability vector $\left\{|c_1|^{s_1},\dots,|c_N|^{s_1}\right\}$ or $\left\{|c_1||a_1|^{s_2-1},\dots,|c_N||a_N|^{s_2-1}\right\}$. The upper bound follows by \cite[Proposition~5.1]{F}.
\end{proof}

\subsection{An example} Finally, we consider a concrete family of self-affine sets inspired by the example of Falconer and Miao, \cite[Figure~1]{FM}. Let $\Phi_c=\left\{f_1,\dots,f_6\right\}$ be a parameterized family of IFSs on the plane given by the functions
\begin{eqnarray*}
&f_1(\xv)=\left[\begin{array}{cc} \frac{1}{3} & 0 \\ 0 & c \\\end{array}\right]\xv+\left[\begin{array}{c} \frac{1}{3} \\ 0 \\\end{array}\right],
&f_2(\xv)=\left[\begin{array}{cc} \frac{1}{3} & 0 \\ 0 & c \\\end{array}\right]\xv+\left[\begin{array}{c} \frac{1}{3} \\ 1-c \\\end{array}\right],\\
&f_3(\xv)=\left[\begin{array}{cc} \frac{1}{3} & 0 \\ \frac{1}{2}-c & c \\\end{array}\right]\xv+\left[\begin{array}{c} 0 \\ \frac{1}{2} \\\end{array}\right],
&f_4(\xv)=\left[\begin{array}{cc} \frac{1}{3} & 0 \\ \frac{1}{2}-c & c \\\end{array}\right]\xv+\left[\begin{array}{c} \frac{2}{3} \\ 0 \\\end{array}\right],\\
&f_5(\xv)=\left[\begin{array}{cc} \frac{1}{3} & 0 \\ c-\frac{1}{2} & c \\\end{array}\right]\xv+\left[\begin{array}{c} 0 \\ \frac{1}{2}-c \\\end{array}\right],
&f_6(\xv)=\left[\begin{array}{cc} \frac{1}{3} & 0 \\ c-\frac{1}{2} & c \\\end{array}\right]\xv+\left[\begin{array}{c} \frac{2}{3} \\ 1-c \\\end{array}\right],
\end{eqnarray*}
where $0<c<1/2$. Let $\Lambda_c$ denote the attractor of $\Phi_c$, see Figure~\ref{fset}.

\begin{figure}[ht]
  \centering
  % Requires \usepackage{graphicx}
  \includegraphics[width=60mm]{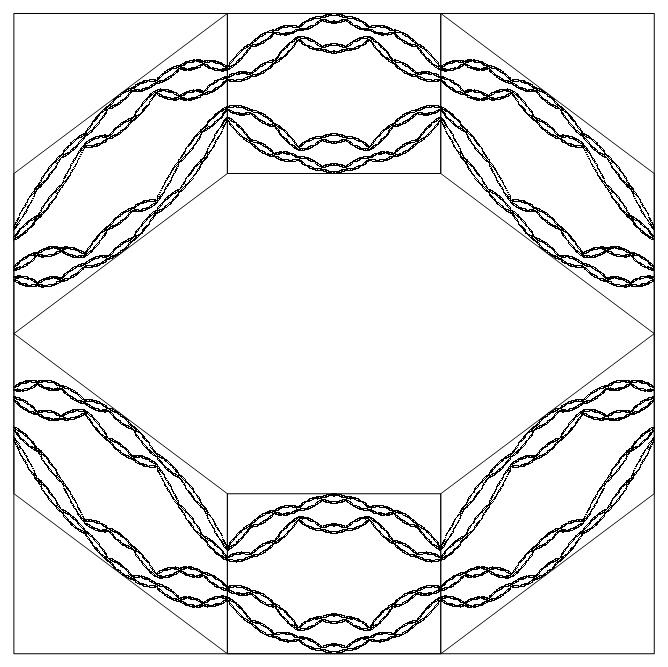} \includegraphics[width=60mm]{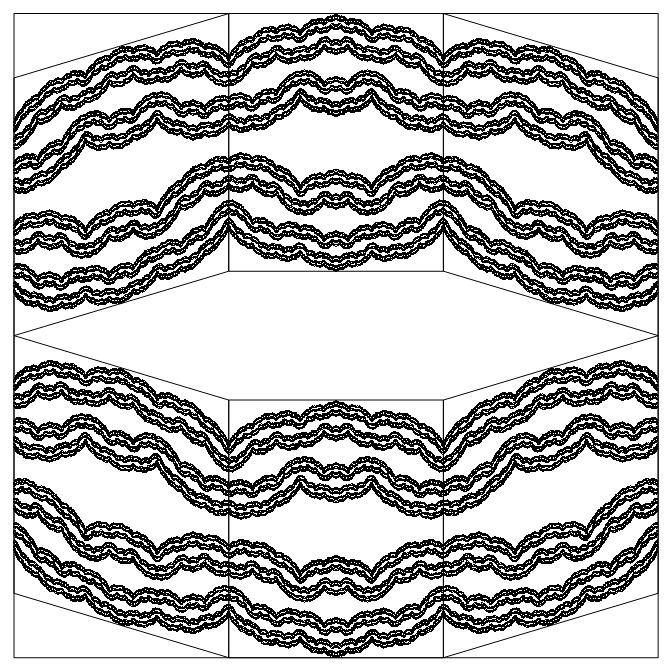}\\
  \caption{The attractors $\Lambda_c$ of IFSs $\Phi_c$ with parameters $c=0.25$ and $c=0.4$. The affine maps are those that map the unit square to the parallelograms shown.}\label{fset}
\end{figure}

\begin{theorem}%\label{tex}
For every $0<c<\frac{1}{3}$
\begin{equation}\label{etex1}
\dim_H\Lambda_c=\dim_B\Lambda_c=1-\frac{\log2}{\log c},
\end{equation}
and there exists a set $\mathcal{C}\subseteq(\frac{1}{3},\frac{1}{2})$ such that $\dim_P\mathcal{C}=0$ and
\begin{equation}\label{etex2}
\dim_H\Lambda_c=\dim_B\Lambda_c=2+\frac{\log2c}{\log3}\text{ for every $c\in\left(\frac{1}{3},\frac{1}{2}\right)\backslash\mathcal{C}$.}
\end{equation}
\end{theorem}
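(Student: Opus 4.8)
The plan is to read off the two dimension formulas from the triangular-matrix results of the previous subsection applied to $\Phi_c$, the two ranges of $c$ corresponding to the two regimes of Lemma~\ref{ldomsplittri}. In both cases the upper bound $\overline{\dim}_B\Lambda_c\le s_0$, with $s_0$ the root of the pressure $P(s)$, is immediate from \cite[Proposition~5.1]{F}, so only the matching lower bound must be produced.

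\textbf{The range $0<c<1/3$.} Here $a_i=1/3>c=|c_i|$, so by Lemma~\ref{ldomsplittri} $\Alpha$ has dominated splitting, $e_{ss}(\ii)$ is the vertical direction for every $\ii$, and $\ly{s}=\log3$, $\ly{ss}=-\log c$. Since the matrices are lower triangular, the transversal measure attached to a Bernoulli measure $\nu=(p_1,\dots,p_6)$ is the self-similar measure on $\R$ of $\phi=\{g_i(x)=\tfrac13 x+t_i\}_{i=1}^6$ with $t_i\in\{0,0,\tfrac13,\tfrac13,\tfrac23,\tfrac23\}$; collapsing the three exact overlaps, this is the self-similar measure of the open set condition system $\{x\mapsto\tfrac13 x+\tfrac j3\}_{j=0,1,2}$, whose attractor is $[0,1]$. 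Choosing $\nu$ uniform makes $\mu^T$ Lebesgue measure on $[0,1]$, so $\dim_H\mu^T=1$, and the Ledrappier--Young formula of Theorem~\ref{tmain1} gives
\[
\dim_H\mu=\frac{\log6}{-\log c}+\Bigl(1-\frac{\log3}{-\log c}\Bigr)=1-\frac{\log2}{\log c},
\]
which is exactly the root of $P(s)$ read off from \eqref{etrigsap}; hence \eqref{etex1}. No exceptional set intervenes here, precisely because the one-dimensional system $\phi$ obeys the open set condition for every $c$.

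\textbf{The range $1/3<c<1/2$.} Now $a_i=1/3<c=|c_i|$, so Lemma~\ref{ldomsplittri} again gives dominated splitting, and by Lemma~\ref{ltrig2} the slopes of the strong stable directions form the attractor of $\phi_c=\{g_i(x)=\tfrac{a_i}{c_i}x-\tfrac{b_i}{c_i}\}_{i=1}^6$; after deleting exact overlaps, $\phi_c$ is $\{x\mapsto\beta x,\ x\mapsto\beta x+\tau,\ x\mapsto\beta x-\tau\}$ with $\beta=\tfrac1{3c}\in(\tfrac23,1)$ and $\tau=1-\tfrac1{2c}\neq0$. The $1$-bunched property $|a_i|=\tfrac13\ge c^2=|c_i|^2$ holds throughout this range. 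Set
\[
\mathcal{C}:=\{\,c\in(\tfrac13,\tfrac12):\phi_c\text{ fails the Hochman condition}\,\}.
\]
Because the Hochman condition is invariant under the conjugacy $x\mapsto x/\tau$, $c\in\mathcal{C}$ iff $\Psi_{\beta(c)}=\{x\mapsto\beta x,\ x\mapsto\beta x\pm1\}$ fails it; since $c\mapsto\beta(c)=\tfrac1{3c}$ is a diffeomorphism of $(\tfrac13,\tfrac12)$ onto $(\tfrac23,1)$, Hochman's estimates for the exceptional parameters of such a family (\cite[Theorems~1.7 and~1.8]{H}) give $\dim_P\mathcal{C}=0$. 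For $c\notin\mathcal{C}$, Theorem~\ref{ttri2} with the $1$-bunched property replacing condition~\eqref{ctri24} yields $\dim_H\Lambda_c=\dim_B\Lambda_c=\min\{s_1,s_2\}$, where $6c^{s_1}=1$ and $6c\cdot3^{1-s_2}=1$, that is $s_1=\tfrac{\log6}{-\log c}$ and $s_2=2+\tfrac{\log2c}{\log3}$. Finally $s_1-s_2$ vanishes at $c=\tfrac13$ and is strictly increasing on $(\tfrac13,\tfrac12)$, so $s_2<s_1$ and $\min\{s_1,s_2\}=s_2$, which is \eqref{etex2}.

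\textbf{Obstacles.} Two points need care. First, separation: the parallelograms of Figure~\ref{fset} abut along the vertical seams $x=\tfrac13$ and $x=\tfrac23$, so $\Phi_c$ itself does not satisfy the strong separation condition required by Theorems~\ref{tmain1} and~\ref{ttri2}; the lower bounds above must therefore be extracted from separated sub-systems of $\Phi_c^n$ (discarding the few compositions straddling a seam, whose pressure contribution vanishes as $n\to\infty$), whose attractors exhaust $\dim_H\Lambda_c$. Second, and the real content, is the exceptional set: one has to confirm that the one-parameter family $\{\Psi_\beta\}_{\beta\in(2/3,1)}$ is non-degenerate in the sense required by Hochman's theorems, and that the sharp, packing-dimension form of his exceptional-set estimate applies, so that $\dim_P\mathcal{C}=0$ rather than merely $\dim_H\mathcal{C}=0$. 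Given these, the rest is bookkeeping with the Ledrappier--Young and triangular-matrix statements already established.
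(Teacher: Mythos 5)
Your proposal is correct and follows essentially the same route as the paper: the upper bound comes from Falconer's pressure estimate, and the lower bound is obtained by passing to the strongly separated subsystems $\widetilde{\Phi}^n_c=\{f_{\underline{i}}\}_{\underline{i}\in\mathcal{S}^n\setminus\{4,6\}^n}$ of the iterates and applying the Ledrappier--Young machinery (Theorem~\ref{tmain1} for $c<1/3$; Theorem~\ref{tmain1b} together with Hochman's theorem for the stable-direction measure when $c>1/3$). The only cosmetic differences are that the paper computes the Lyapunov dimensions of the uniform Bernoulli measures on the subsystems directly instead of quoting Theorem~\ref{ttri2}, and it organises the exceptional set as a countable union $\mathcal{C}=\bigcup_n\mathcal{C}_n$ of level-$n$ exceptional sets rather than a single first-level set.
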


The box dimension of $\Lambda_c$ is already known for every $c\in(0,\frac{1}{2})$ by \cite[Corollary~5]{F2}.

\begin{proof}
Let $\mathcal{S}=\left\{1,\dots,6\right\}$ denote the set of symbols and let $\widetilde{\mathcal{S}}^n:=\mathcal{S}^n\backslash\left\{4,6\right\}^n$.

Observe that the IFS $\Phi_c$ satisfies the open set condition but not the strong separation condition. However, the IFS $\widetilde{\Phi}^n_c$ given by $\widetilde{\Phi}^n_c=\left\{f_{\underline{i}}\right\}_{\underline{i}\in\widetilde{S}^n}$ satisfies SSC for every $n\geq1$ and $0<c<\frac{1}{2}$. Denote the attractor of $\widetilde{\Phi}^n_c$ by $\widetilde{\Lambda}_{n,c}$. For every $n\geq1$ let $\widetilde{\Sigma}_n=\left(\widetilde{S}^n\right)^{\N}$ be the symbolic space and $\nu^{(n)}$ let be the uniform Bernoulli measure on $\widetilde{\Sigma}_n$ and $\widetilde{\mu}_{n,c}$ the corresponding self-affine measure supported on $\widetilde{\Lambda}_{n,c}$.

First, let us consider the case $0<c<\frac{1}{3}$. Then by Lemma~\ref{ldomsplittri} the IFS $\widetilde{\Phi}^n_c$ satisfies dominated splitting and by Lemma~\ref{ldomsplit1} and Lemma~\ref{ldomsplit2}, the strong stable directions are parallel to the $y$-axis. Hence, the transversal measure $\widetilde{\mu}^T_{n,c}$, defined in \eqref{etransmeasure}, is a self-similar measure with uniform probabilities, satisfying SSC. Thus,
$$\dim_H\widetilde{\mu}^T_{n,c}=\frac{\log(3^n-1)}{\log3^n}.$$ Applying Theorem~\ref{tmain1}, we get
\begin{multline*}
\dim_H\Lambda\geq\lim_{n\to\infty}\dim_H\widetilde{\Lambda}_{n,c}\geq\\
\lim_{n\to\infty}\dim_H\widetilde{\mu}_{n,c}=\lim_{n\to\infty}\frac{\log(6^n-2^n)}{-\log c^n}+\left(1-\frac{\log3^n}{-\log c^n}\right)\frac{\log(3^n-1)}{\log3^n}=1-\frac{\log2}{\log c},
\end{multline*}
which proves \eqref{etex1}.

Now we turn to the case $\frac{1}{3}<c<\frac{1}{2}$. Lemma~\ref{ldomsplittri} implies that the IFS $\Phi$ satisfies again the dominated splitting condition, moreover, by Lemma~\ref{ldomsplit1} and Lemma~\ref{ltrig2}, the strong stable directions can be given by the IFS $\phi_c$ of similarities $$g_1(x)=g_2(x)=\frac{1}{3c}x,\ g_3(x)=g_4(x)=\frac{1}{3c}x+\frac{2c-1}{2c},\text{ and }g_5(x)=g_6(x)=\frac{1}{3c}x+\frac{1-2c}{2c}.$$ Thus by \eqref{essdist}, the distribution $\widetilde{\nu}_{n,c}^{ss}$ of strong stable directions of the IFS $\widetilde{\Phi}^n_c$ are given by the IFS $\widetilde{\phi}_c^n=\left\{g_{\underline{i}}\right\}_{\underline{i}\in\widetilde{S}^n}$ with the uniform Bernoulli measure on $\widetilde{\Sigma}_n$. Applying \cite[Theorem~1.8]{H} we get that for every $n\geq1$ there exists a set $\mathcal{C}_n$ with $\dim_P\mathcal{C}_n=0$ such that
\[
\dim_H\widetilde{\nu}_{n,c}^{ss}=\min\left\{1,\dfrac{-2^n\frac{2^n-1}{6^n-2^n}\log\frac{2^n-1}{6^n-2^n}-(3^n-2^n)\frac{1}{3^n-1}\log\frac{1}{3^n-1}}{\log(3c)^n}\right\}\text{ for every }c\in\left(\frac{1}{3},\frac{1}{2}\right)\backslash\mathcal{C}_n.
\]
For sufficiently large $n$, we apply Theorem~\ref{tmain1b}, and therefore
\begin{multline*}
\dim_H\Lambda\geq\lim_{n\to\infty}\dim_H\widetilde{\Lambda}_{n,c}\geq\\
\lim_{n\to\infty}\dim_H\widetilde{\mu}_{n,c}=\lim_{n\to\infty}1+\frac{\log(6^n-2^n)-(-\log  c^n)}{\log 3^n}=2+\frac{\log2c}{\log3}\text{ for every }c\in\left(\frac{1}{3},\frac{1}{2}\right)\backslash\mathcal{C},
\end{multline*}
where $\mathcal{C}=\bigcup_{n=1}^{\infty}\mathcal{C}_n$, which proves \eqref{etex2}.
\end{proof}

\subsection{An applications for Theorem~\ref{tmain1c}}\label{ssthm1c}

Finally, we show an application for Theorem~\ref{tmain1c}. It is non-trivial to check whether condition Theorem~\ref{tmain1c}\eqref{cmain1c} holds. We replace it with a condition that can be checked much easier similarly to Theorem~\ref{tHL}.

\begin{theorem}\label{tmain1capp}
	Let $\Alpha=\left\{A_1, A_2,\dots, A_N\right\}$ be a finite set of contracting, non-singular $2\times2$ matrices, and let $\Phi=\left\{f_i(\xv)=A_i\xv+\tv_i\right\}_{i=1}^N$ be an iterated function system on the plane with affine mappings. Let $\nu$ be a left-shift invariant and ergodic Bernoulli-probability measure on $\Sigma^{+}$, and $\mu$ be the corresponding self-affine measure. Assume that
	\begin{enumerate}
		\item\label{c1c1} $\Alpha$ satisfies the dominated splitting condition,
		\item\label{c1c2} $\Alpha$ satisfies the backward non-overlapping condition, i.e. there exists a backward invariant multicone $M$ such that $A^{-1}_i(M^{o})\subseteq M^{o}$ and $A^{-1}_i(M^{o})\cap A_j^{-1}(M^{o})=\emptyset$ for every $i\neq j$,
		\item\label{c1c3} $\Phi$ satisfies the strong separation condition,
		\item\label{c1c4} $\dfrac{h_{\nu}}{\ly{ss}-\ly{s}}+2\dfrac{h_{\nu}}{\ly{ss}}>2$.
	\end{enumerate}
	Then
	\begin{equation*}%\label{elyapdimHdim}
	\dim_H\mu=\ldim\mu. %=1+\frac{h_{\nu}-\ly{s}}{\ly{ss}}>1.
	\end{equation*}
\end{theorem}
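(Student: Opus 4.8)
The plan is to assemble Theorem~\ref{tmain1}, Lemma~\ref{ltoHL}, Lemma~\ref{lmarstrand} and Theorem~\ref{tmain1c}, splitting into two cases according to whether or not $\dim_H\nu_{ss}\geq\min\{1,\dim_H\mu\}$. First I would observe that conditions~\eqref{c1c1} and~\eqref{c1c3} are exactly the hypotheses of Theorem~\ref{tmain1}, so $\mu$ is exact dimensional, $\dim_H\mu^T_\ii$ is $\nu$-a.e. equal to a constant, and
\[
\dim_H\mu=\frac{h_\nu}{\ly{ss}}+\left(1-\frac{\ly{s}}{\ly{ss}}\right)\dim_H\mu^T_\ii .
\]
Moreover conditions~\eqref{c1c1} and~\eqref{c1c2} are the hypotheses of Lemma~\ref{ltoHL}, which gives $\dim_H\nu_{ss}=h_\nu/(\ly{ss}-\ly{s})$; the denominator is positive since dominated splitting forces $\ly{ss}-\ly{s}\geq\delta>0$.

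Next I would apply Lemma~\ref{lmarstrand} with $m=\mu$ and $\lambda=\nu_{ss}$, identifying $\mathbf{P}^1$ with $[0,\pi)$ so that $\pr{\ii}$ becomes $\proj_\theta$ with $\theta=e_{ss}(\ii)$ and $\nu_{ss}=(e_{ss})_*\nu$. For each $\varepsilon>0$ the lemma produces a set of directions of positive $\nu_{ss}$-measure on which $\dim_H(\proj_\theta)_*\mu\geq\min\{1,\dim_H\mu,\dim_H\nu_{ss}\}-\varepsilon$; taking its preimage under $e_{ss}$ gives a set of $\ii$ of positive $\nu$-measure on which $\dim_H\mu^T_\ii\geq\min\{1,\dim_H\mu,\dim_H\nu_{ss}\}-\varepsilon$. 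Since $\dim_H\mu^T_\ii$ is $\nu$-a.e. constant and $\varepsilon$ is arbitrary, this upgrades to
\[
\dim_H\mu^T_\ii\geq\min\{1,\dim_H\mu,\dim_H\nu_{ss}\}\qquad\text{for }\nu\text{-a.e. }\ii .
\]

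Now the case split. If $\dim_H\nu_{ss}\geq\min\{1,\dim_H\mu\}$, the last bound reads $\dim_H\mu^T_\ii\geq\min\{1,\dim_H\mu\}$, and since the reverse inequality always holds ($\mu^T_\ii$ being an orthogonal projection of $\mu$) we get $\dim_H\mu^T_\ii=\min\{1,\dim_H\mu\}$, whence $\dim_H\mu=\ldim\mu$ by \eqref{elyapHdimequal}. If instead $\dim_H\nu_{ss}<\min\{1,\dim_H\mu\}$, then $\dim_H\mu^T_\ii\geq\dim_H\nu_{ss}=h_\nu/(\ly{ss}-\ly{s})$, and substituting this into the Ledrappier--Young formula above gives
\[
\dim_H\mu\geq\frac{h_\nu}{\ly{ss}}+\frac{\ly{ss}-\ly{s}}{\ly{ss}}\cdot\frac{h_\nu}{\ly{ss}-\ly{s}}=\frac{2h_\nu}{\ly{ss}} .
\]
Hence $\dim_H\nu_{ss}+\dim_H\mu\geq\dfrac{h_\nu}{\ly{ss}-\ly{s}}+\dfrac{2h_\nu}{\ly{ss}}>2$ by condition~\eqref{c1c4}, so the hypotheses \eqref{c1c1}, \eqref{c1c3} and $\dim_H\nu_{ss}+\dim_H\mu>2$ of Theorem~\ref{tmain1c} are satisfied, and that theorem yields $\dim_H\mu=\ldim\mu$. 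In either case the proof is complete.

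This is essentially a synthesis of already established results, so no single step is technically heavy; the point requiring care is that the lower bound of Lemma~\ref{lmarstrand}, which a priori holds only on a positive-measure set of directions, must be propagated to $\nu$-a.e.\ $\ii$, and here the exact dimensionality of the transversal measures from Theorem~\ref{tmain1} (the $\nu$-a.e.\ constancy of $\dim_H\mu^T_\ii$) is what makes it work. The one genuinely new ingredient is the algebraic identity $\left(1-\ly{s}/\ly{ss}\right)\cdot h_\nu/(\ly{ss}-\ly{s})=h_\nu/\ly{ss}$: in the second case it makes the bound self-improving, so that feeding $\dim_H\nu_{ss}$ back through the Ledrappier--Young formula produces exactly the extra $h_\nu/\ly{ss}$ needed to turn condition~\eqref{c1c4} into the transversality hypothesis of Theorem~\ref{tmain1c}.
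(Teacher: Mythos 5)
Your proof is correct, and it reaches the same destination as the paper --- the lower bound $\dim_H\mu\geq 2h_{\nu}/\ly{ss}$ fed into Theorem~\ref{tmain1c} --- but by a genuinely different intermediate route. The paper first disposes of the case $\ly{ss}\leq 2\ly{s}$ via (the proof of) Theorem~\ref{tHL}, and in the remaining case invokes a separate bootstrapping lemma (Lemma~\ref{lto1c}): starting from the trivial bound $\dim_H\mu\geq h_{\nu}/\ly{ss}$ it iterates the map $x\mapsto \frac{h_{\nu}}{\ly{ss}}+\bigl(1-\frac{\ly{s}}{\ly{ss}}\bigr)\min\bigl\{\frac{h_{\nu}}{\ly{ss}-\ly{s}},x\bigr\}$ and passes to the fixed point $\min\{2h_{\nu}/\ly{ss},h_{\nu}/\ly{s}\}$. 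You short-circuit that iteration by casing on the unknown quantity itself: either $\dim_H\nu_{ss}\geq\min\{1,\dim_H\mu\}$, in which case the Kaufman-type bound of Lemma~\ref{lmarstrand} (propagated to $\nu$-a.e.\ $\ii$ by the a.e.\ constancy of $\dim_H\mu^T_{\ii}$, exactly as in Theorem~\ref{tmain1b}) forces $\dim_H\mu^T_{\ii}=\min\{1,\dim_H\mu\}$ and \eqref{elyapHdimequal} finishes without even using condition~\eqref{c1c4}; or $\dim_H\nu_{ss}<\min\{1,\dim_H\mu\}$, in which case a single application of Lemma~\ref{lmarstrand} gives $\dim_H\mu^T_{\ii}\geq h_{\nu}/(\ly{ss}-\ly{s})$ and one substitution into \eqref{eLYformula} yields $\dim_H\mu\geq 2h_{\nu}/\ly{ss}$. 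Your argument is shorter and avoids both the convergence discussion in Lemma~\ref{lto1c} and the appeal to Theorem~\ref{tHL}; what it gives up is the unconditional lower bound $\dim_H\mu\geq\min\{2h_{\nu}/\ly{ss},h_{\nu}/\ly{s}\}$ of Lemma~\ref{lto1c}, which the paper's route establishes as a by-product of independent interest (valid under conditions \eqref{c1c1}--\eqref{c1c3} alone). All the ingredients you rely on --- Lemma~\ref{ltoHL} for $\dim_H\nu_{ss}=h_{\nu}/(\ly{ss}-\ly{s})$, the identification of $\mathbf{P}^1$ with $[0,\pi)$ when invoking Lemma~\ref{lmarstrand}, and the strict separation $\ly{ss}-\ly{s}\geq\delta>0$ from dominated splitting --- are used exactly as the paper uses them elsewhere, so I see no gap.
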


By Theorem~\ref{tmain1}, we get the trivial lower bound $\dim_H\mu\geq\frac{h_{\nu}}{\ly{ss}}$. Unfortunately, if the backward non-overlapping condition holds then $\frac{h_{\nu}}{\ly{ss}}+\frac{h_{\nu}}{\ly{ss}-\ly{s}}<2\frac{h_{\nu}}{\ly{ss}-\ly{s}}<2$. Therefore, we need to improve the lower bound of $\dim_H\mu$.

\begin{lemma}\label{lto1c}
	Let us assume that conditions \eqref{c1c1}-\eqref{c1c3} of Theorem~\ref{tmain1capp} hold. Then
	$$
	\dim_H\mu\geq\min\left\{2\dfrac{h_{\nu}}{\ly{ss}},\dfrac{h_{\nu}}{\ly{s}}\right\}.
	$$
\end{lemma}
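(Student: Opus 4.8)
The plan is to combine the Ledrappier--Young formula of Theorem~\ref{tmain1}, the refined projection estimate of Lemma~\ref{lmarstrand}, and the dimension formula for $\nu_{ss}$ of Lemma~\ref{ltoHL}; conditions \eqref{c1c1}--\eqref{c1c3} of Theorem~\ref{tmain1capp} contain the hypotheses of all three. The key step is to establish
\[
\dim_H\mu^{T}_{\ii}\geq\min\left\{1,\dim_H\mu,\dim_H\nu_{ss}\right\}\quad\text{for $\nu$-a.e. }\ii\in\Sigma^+.
\]
To prove this, I identify $\mathbf{P}^1$ with $[0,\pi)$, so that for $\ii\in\Sigma^+$ the projection $\pr{\ii}$ coincides with $\proj_{e_{ss}(\ii)}$ in the sense of Lemma~\ref{lmarstrand} and $\nu_{ss}=(e_{ss})_*\nu$ is a measure on $[0,\pi)$. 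Applying Lemma~\ref{lmarstrand} with $m=\mu$ and $\lambda=\nu_{ss}$: for every $\varepsilon>0$ there is a set $A_\varepsilon\subseteq[0,\pi)$ with $\nu_{ss}(A_\varepsilon)>0$ and $\dim_H(\proj_\theta)_*\mu\geq\min\{1,\dim_H\mu,\dim_H\nu_{ss}\}-\varepsilon$ for every $\theta\in A_\varepsilon$. Since $\nu_{ss}=(e_{ss})_*\nu$, the set $e_{ss}^{-1}(A_\varepsilon)$ has $\nu$-measure $\nu_{ss}(A_\varepsilon)>0$, and $\dim_H\mu^{T}_{\ii}=\dim_H(\proj_{e_{ss}(\ii)})_*\mu\geq\min\{1,\dim_H\mu,\dim_H\nu_{ss}\}-\varepsilon$ for $\ii$ in it. By Theorem~\ref{tmain1} the function $\ii\mapsto\dim_H\mu^{T}_{\ii}$ is $\nu$-a.e. equal to a constant, and that constant must therefore be at least $\min\{1,\dim_H\mu,\dim_H\nu_{ss}\}-\varepsilon$; letting $\varepsilon\to0$ gives the displayed inequality.

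Next I would feed this into \eqref{eLYformula}, written in the form $\dim_H\mu=\frac{h_{\nu}}{\ly{ss}}+\bigl(1-\frac{\ly{s}}{\ly{ss}}\bigr)\dim_H\mu^{T}_{\ii}$, and use Lemma~\ref{ltoHL}, which gives $\dim_H\nu_{ss}=\frac{h_{\nu}}{\ly{ss}-\ly{s}}$; since $\nu_{ss}$ is a probability measure on $\mathbf{P}^1$ we also have $\dim_H\nu_{ss}\leq1$, i.e. $h_{\nu}\leq\ly{ss}-\ly{s}$. Because $1-\frac{\ly{s}}{\ly{ss}}>0$, the bound on $\dim_H\mu^{T}_{\ii}$ yields $\dim_H\mu\geq\frac{h_{\nu}}{\ly{ss}}+\bigl(1-\frac{\ly{s}}{\ly{ss}}\bigr)c$ with $c:=\min\{1,\dim_H\mu,\dim_H\nu_{ss}\}$, and I distinguish cases according to which term realises $c$. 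If $c=\dim_H\nu_{ss}$, then $\dim_H\mu\geq\frac{h_{\nu}}{\ly{ss}}+\frac{\ly{ss}-\ly{s}}{\ly{ss}}\cdot\frac{h_{\nu}}{\ly{ss}-\ly{s}}=\frac{2h_{\nu}}{\ly{ss}}$. If $c=1$, then $\dim_H\mu\geq\frac{h_{\nu}+(\ly{ss}-\ly{s})}{\ly{ss}}\geq\frac{2h_{\nu}}{\ly{ss}}$ by $h_{\nu}\leq\ly{ss}-\ly{s}$. If $c=\dim_H\mu$, then $\frac{\ly{s}}{\ly{ss}}\dim_H\mu\geq\frac{h_{\nu}}{\ly{ss}}$, i.e. $\dim_H\mu\geq\frac{h_{\nu}}{\ly{s}}$. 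In every case $\dim_H\mu\geq\min\bigl\{\frac{2h_{\nu}}{\ly{ss}},\frac{h_{\nu}}{\ly{s}}\bigr\}$, which is the assertion.

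The substance of the argument lies entirely in the first step, and especially in having Marstrand's theorem in the form where the exceptional set of directions is controlled by $\dim_H\lambda=\dim_H\nu_{ss}$ rather than by the classical bound --- this is precisely the content of Lemma~\ref{lmarstrand}; the a.e.\ constancy of $\dim_H\mu^{T}_{\ii}$ from Theorem~\ref{tmain1} is what allows a statement holding on a set of positive $\nu$-measure to propagate to $\nu$-a.e. $\ii$. Once these two inputs are in place, the remainder is elementary manipulation of \eqref{eLYformula} and Lemma~\ref{ltoHL}, and requires no dynamical information beyond \eqref{c1c1}--\eqref{c1c3}.
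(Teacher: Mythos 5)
Your proof is correct and rests on exactly the same ingredients as the paper's: Theorem~\ref{tmain1} (exact dimensionality, a.e.\ constancy of $\dim_H\mu^T_{\ii}$, and formula \eqref{eLYformula}), Lemma~\ref{lmarstrand} applied with $m=\mu$ and $\lambda=\nu_{ss}$, and Lemma~\ref{ltoHL} for $\dim_H\nu_{ss}=h_{\nu}/(\ly{ss}-\ly{s})$. The only difference is in how the resulting self-improving inequality $\dim_H\mu\geq\frac{h_{\nu}}{\ly{ss}}+\left(1-\frac{\ly{s}}{\ly{ss}}\right)\min\left\{1,\dim_H\mu,\dim_H\nu_{ss}\right\}$ is exploited: the paper iterates it from $x_0=h_{\nu}/\ly{ss}$ to the fixed point of $f$, whereas you solve it in one step by a three-way case analysis; both correctly yield $\min\left\{2h_{\nu}/\ly{ss},h_{\nu}/\ly{s}\right\}$.
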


\begin{proof}
	Let us define a sequence $\left\{x_n\right\}_{n=0}^{\infty}$ as follows, let $x_0=\dfrac{h_{\nu}}{\ly{ss}}$ and $x_n=f(x_{n-1})$ for $n\geq1$, where
	$$
	f(x)=\dfrac{h_{\nu}}{\ly{ss}}+\left(1-\dfrac{\ly{s}}{\ly{ss}}\right)\min\left\{\dfrac{h_{\nu}}{\ly{ss}-\ly{s}},x\right\}.
	$$
	It is easy to see that the sequence $\left\{x_n\right\}_{n=0}^{\infty}$ converges to $\min\left\{2\dfrac{h_{\nu}}{\ly{ss}},\dfrac{h_{\nu}}{\ly{s}}\right\}$, which is the fixed point of $x\mapsto f(x)$. By applying Theorem~\ref{tmain1} and Lemma~\ref{lmarstrand}, one can show by induction that $\dim_H\mu\geq x_n$ for every $n\geq0$, as required.
\end{proof}

\begin{proof}[Proof of Theorem~\ref{tmain1capp}]
	If $\dfrac{h_{\nu}}{\ly{ss}-\ly{s}}\geq\dfrac{h_{\nu}}{\ly{s}}$ then we may apply Theorem~\ref{tHL} and the statement holds. Thus, without loss of generality we may assume that $\dfrac{h_{\nu}}{\ly{ss}-\ly{s}}<\dfrac{h_{\nu}}{\ly{s}}$, or equivalently $2\ly{s}<\ly{ss}$. Then by Lemma~\ref{lto1c} we get that $\dim_H\mu\geq2\dfrac{h_{\nu}}{\ly{ss}}$. Therefore,
	$$
	\dim_H\nu_{ss}+\dim_H\mu\geq\dfrac{h_{\nu}}{\ly{ss}-\ly{s}}+2\dfrac{h_{\nu}}{\ly{ss}}>2.
	$$
	The statement follows by Theorem~\ref{tmain1c}.
\end{proof}

\begin{figure}[ht]
	\centering
	% Requires \usepackage{graphicx}
	\includegraphics[width=65mm]{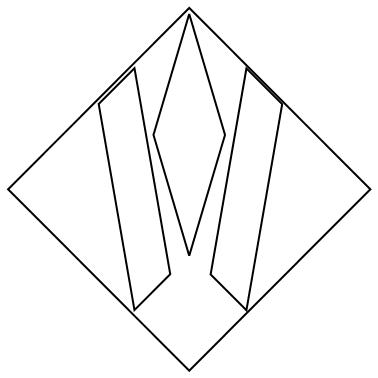} %\includegraphics[width=55mm]{example_p2.jpg}
	\includegraphics[width=65mm]{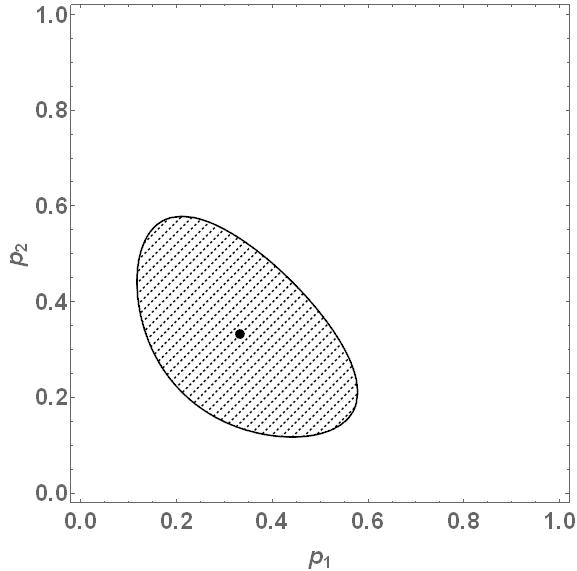}\\
	\caption{The images of parallelogram $\mathcal{O}$ by the affine maps $f_1,f_2,f_3$ and the region of probability vectors $(p_1,p_2,1-p_1-p_2)$, where Theorem~\ref{tmain1capp}\eqref{c1c4} holds. }\label{fexample}
\end{figure}

 Let $\Phi=\left\{f_1,f_2,f_3\right\}$ be an IFSs on the plane given by the functions
$$
f_1(\xv)=\left[\begin{array}{cc} \frac{16}{81} & 0 \\[0.3em] -\frac{2}{3} & \frac{2}{3} \\\end{array}\right]\xv+\left[\begin{array}{c} \frac{19}{54} \\[0.3em] \frac{47}{100} \\\end{array}\right],\ f_2(\xv)=\left[\begin{array}{cc} \frac{16}{81} & 0 \\[0.3em] 0 & \frac{2}{3} \\\end{array}\right]\xv+\left[\begin{array}{c} \frac{1235}{2187} \\[0.3em] \frac{3}{10} \\\end{array}\right]\text{ and }f_3(\xv)=\left[\begin{array}{cc} \frac{16}{81} & 0 \\[0.3em] \frac{2}{3} & \frac{2}{3} \\\end{array}\right]\xv+\left[\begin{array}{c} \frac{1721}{2187} \\[0.3em] -\frac{38}{81} \\\end{array}\right],
$$
and let us denote the attractor of $\Phi$ by $\Lambda$.

By Lemma~\ref{ldomsplittri}, it satisfies the dominated splitting condition, moreover, it satisfies the strong separation condition with the parallelogram $\mathcal{O}$ formed by vertices $(0,0), \left(\dfrac{19}{27},1\right), \left(\dfrac{38}{27},0\right), \left(\dfrac{19}{27},-1\right)$, see Figure~\ref{fexample}. By Lemma~\ref{ltrig2}, the strong stable directions are formed by the self-similar IFS\linebreak $\left\{x\mapsto\dfrac{8}{27}x+1,x\mapsto\dfrac{8}{27}x,x\mapsto\dfrac{8}{27}x-1\right\}$, which satisfies the strong separation condition. 

Let us consider, the  uniformly distributed Bernoulli measure on $\left\{1,2,3\right\}^{\N}$, and the corresponding Bernoulli measure $\mu=\frac{1}{3}(f_1)_*\mu+\frac{1}{3}(f_2)_*\mu+\frac{1}{3}(f_3)_*\mu$. Then
$$
\dfrac{h_{\nu}}{\ly{ss}-\ly{s}}+2\dfrac{h_{\nu}}{\ly{ss}}=\dfrac{\log3}{\log27/8}+\dfrac{2\log3}{\log81/16}>2.
$$
So we may apply Theorem~\ref{tmain1capp} and we get
$$
\dim_H\mu=\dim_H\Lambda=\dim_B\Lambda=1+\dfrac{\log3-\log3/2}{\log81/16}\approx1.4273.
$$
For the complete region of probability vectors, where condition Theorem~\ref{tmain1capp}\eqref{c1c4} holds, see Figure~\ref{fexample}. 

For other examples, see Falconer and Kempton~\cite{FK}.

\subsection*{Acknowledgement}
	The author is extremely grateful to the anonymous referee for the useful comments and suggestions. Especially, to point out Theorem~\ref{tmain1c}.

\end{document}